\documentclass[hidelinks,onefignum,onetabnum]{siamart251216}
\usepackage{lipsum}
\usepackage{amsfonts}
\usepackage{amssymb}
\usepackage{graphicx}
\usepackage{mathrsfs}
\usepackage{epstopdf}
\ifpdf
  \DeclareGraphicsExtensions{.eps,.pdf,.png,.jpg}
\else
  \DeclareGraphicsExtensions{.eps}
\fi

\newsiamremark{remark}{Remark}
\newsiamremark{hypothesis}{Hypothesis}
\crefname{hypothesis}{Hypothesis}{Hypotheses}
\newsiamthm{claim}{Claim}
\newsiamremark{fact}{Fact}
\crefname{fact}{Fact}{Facts}

\headers{Low-cost Sylvester solvers}{J. Saha, J. Yang, and P. Yin}

\title{Low-cost non-iterative Sylvester solvers for dynamical low-rank approximation of reaction-diffusion equations}
\author{Joy Saha\thanks{Computational Science Program, The University of Texas at El Paso, El Paso, TX 79968, USA (\email{jsaha@miners.utep.edu}).}
\and Jun Yang\thanks{Department of Mathematics, Southern University of Science and Technology, Shenzhen 518000, P.R.China (\email{yangjun@sustech.edu.cn}).}
\and Peimeng Yin\thanks{Corresponding author. Department of Mathematical Sciences, The University of Texas at El Paso, El Paso, TX 79968, USA; Computational Science Program, The University of Texas at El Paso, El Paso, TX 79968, USA (\email{pyin@utep.edu}).}} 

\usepackage{amsopn}
\newcommand{\R}{\mathbb{R}}

\usepackage{booktabs}
\usepackage{algorithm}
\usepackage{algpseudocode}
\newsiamthm{assumption}{Assumption}
\crefname{assumption}{Assumption}{Assumptions}
\newsiamthm{example}{Example}

\ifpdf
\hypersetup{
  pdftitle={An Example Article},
  pdfauthor={D. Doe, P. T. Frank, and J. E. Smith}
}
\fi

\ifpdf
  \DeclareGraphicsExtensions{.eps,.pdf,.png,.jpg}
\else
  \DeclareGraphicsExtensions{.eps}
\fi
\usepackage[margin=1in]{geometry}

\usepackage{graphicx}
\usepackage{subcaption}
\usepackage{multirow}
\usepackage{booktabs}
\usepackage{adjustbox}

\usepackage{algorithm}
\usepackage{algpseudocode}
\let\FOR\For
\let\ENDFOR\EndFor
\let\REQUIRE\Require
\let\ENSURE\Ensure
\let\STATE\State

\let\RETURN\Return

\newcommand{\algcomment}[1]{%
  \item[]\hskip-\algorithmicindent
  {\small\itshape // #1}%
}

\renewcommand{\proofbox}{}

\begin{document}

\maketitle 

\begin{abstract}
In this work, we develop low-cost non-iterative Sylvester solvers for dynamical low-rank approximation of reaction-diffusion equations and incorporate them into a basis-update \& Galerkin mass-lumped finite element method (\mbox{BUG-MLFEM}). By incorporating mass lumping via Gauss-Lobatto quadrature, the spatial discretization yields a matrix differential equation equipped with diagonal positive-definite mass matrices. We establish the weighted dynamical low-rank approximation (WDLRA) within the Galerkin framework of the MLFEM system, proving the theoretical equivalence between the BUG integrator and MLFEM on low-rank subspaces, which guarantees the discrete well-posedness of the implicit BUG steps. To overcome the severe computational bottleneck of standard $O(m^3+n^3)$ Sylvester solvers, we develop tailored, low-cost non-iterative solvers that exploit the inherent sparse-by-small algebraic structure. Under Dirichlet and Neumann boundary conditions (BCs), the large coefficient matrices are banded, enabling efficient diagonalization of the small $r \times r$ factor matrix and reducing the per-step computational complexity to $O((m+n)r^2)$ and storage to $O((m+n)r)$. For periodic BCs, we combine this diagonalization strategy with a rank-2 Sherman-Morrison-Woodbury update to handle wrap-around corner entries at negligible additional cost. Numerical experiments demonstrate first-order temporal and $(k+1)$th-order spatial convergence of the proposed BUG-MLFEM in the discrete $L^2$ norm, where $k$ denotes the polynomial degree of the finite element space, together with significant computational speedups over existing low-rank integrators.
\end{abstract}

% REQUIRED
\begin{keywords}
Dynamical low-rank approximation, mass-lumped finite element method, reaction-diffusion equations, BUG integrator, non-iterative Sylvester solvers.
\end{keywords}

% REQUIRED
\begin{MSCcodes}
65M60, 65F45, 65F55
\end{MSCcodes}

\section{Introduction}
Reaction-diffusion equations serve as fundamental mathematical models across physical, chemical, and biological sciences, describing multi-species interactions governed by local non-linear reaction kinetics and global spatial diffusion \cite{murray2003mathematical}.  Numerical discretizations such as finite difference, finite element (FEM), finite volume, and spectral methods have been extensively developed for these systems \cite{thomee2006galerkin}. Among them, finite element methods provide superior geometric flexibility and high-order accuracy. 
However, fine spatial discretizations, particularly when paired with high-order finite elements, give rise to massive matrix differential equations, rendering standard full-rank time-marching integrators practically unfeasible.

Discretizing high-dimensional partial differential equations (PDEs) on fine spatial grids typically yields massive algebraic systems whose direct numerical integration is computationally prohibitive in terms of both execution time and memory footprint; see, e.g., \cite{grasedyck2013literature}. 
Dynamical low-rank approximation (DLRA) \cite{koch2007dynamical} has emerged as an indispensable model-reduction paradigm to alleviate this severe computational bottleneck by exploiting the intrinsic low-dimensional manifold structure of evolving solutions. Rather than marching the full-order high-dimensional state, DLRA projects the governing dynamics onto a time-dependent low-rank manifold, advancing only the underlying low-dimensional factor matrices. This substantially reduces both the active degrees of freedom and memory requirements, establishing DLRA as a powerful approach for large-scale matrix differential equations across a wide range of applications, including kinetic equations \cite{yin2025towards, einkemmer2025review}, wave propagation \cite{hochbruck2023rank, zhao2025low}, uncertainty quantification \cite{musharbash2020symplectic, kusch2022dynamical}, machine learning \cite{schotthofer2022low, schmidt2023rank}, and diffusion-type equations \cite{nakao2025reduced, el2025sylvester, meng2025preconditioning, yang2026second}.

DLRA relies on robust splitting-based time integrators, among which the projector-splitting (PS) integrator \cite{lubich2014projector, kieri2016discretized} and the Basis-Update \& Galerkin (BUG) integrators \cite{ceruti2022unconventional, ceruti2024robust, ceruti2022rank} are the most prominent. While the PS integrator performs remarkably well across various settings, its intermediate backward-in-time evolution step can induce severe numerical instabilities, particularly when applied to parabolic problems or systems with high stiffness. In contrast, BUG integrators overcome these limitations by updating the basis matrices in parallel and evolving the low-rank factors exclusively forward in time. This structural feature not only simplifies extensions to structure-preserving discretizations \cite{ceruti2022rank} but also accommodates implicit sub-steps to handle stiff dynamics robustly. A key milestone in this framework is the augmented BUG integrator \cite{ceruti2023rank}, which offers enhanced flexibility and rank adaptivity. 

More recently, DLRA has gained increasing traction in solving diffusion-type problems \cite{nakao2025reduced, el2025sylvester, meng2025preconditioning, yang2026second}, with the BUG framework taking a central role. In BUG integrator \cite{ceruti2022unconventional}, the rank-constrained evolution decouples into independent $K$-, $L$-, and $S$-steps. However, spatial diffusion operators are inherently stiff. To overcome severe Courant--Friedrichs--Lewy (CFL) stability limits, implicit time discretizations are imperative. Consequently, the implicit update in both $K$- and $L$-steps requires solving large-scale Sylvester-type matrix equations at each time step. Standard dense solvers, such as the Bartels--Stewart algorithm \cite{bartels1972algorithm}, incur an $O(m^3+n^3)$ computational cost and $O(m^2+n^2)$ memory footprint per time step, where $m$ and $n$ denote the spatial resolutions along the respective coordinate directions. For high-resolution spatial grids, these dense algebraic operations become the dominant computational bottleneck, largely canceling out the speedup achieved by the low-rank formulation.

To address this challenge, several low-rank implicit integrators have been explored. The Reduced Augmentation Implicit Low-rank (RAIL) method \cite{nakao2025reduced} and its 3D extension \cite{nakao2026low} reduce the memory footprint and computational cost to ${O}(m^2r)$ by leveraging low-rank structures. For $ d$-dimensional advection-diffusion problems, preconditioned adaptive-rank Krylov/GMRES solvers were introduced in \cite{el2025sylvester}, achieving efficient solves with ${O}(m r^2 + r^{d+1})$ complexity under second-order finite difference stencils. In addition, a preconditioned low-rank generalized minimal residual (GMRES) method for implicit discretizations of matrix differential equations was proposed in \cite{meng2025preconditioning}, where a second-order finite difference discretization is employed in space. Exact low-rank solvers for the linear diffusion operator in Allen-Cahn equations were also developed in \cite{yang2026second}, achieving $O((m^2+n^2)r)$ computational complexity per time step.

Despite these notable advances, combining implicit low-rank integrators with high-order finite element methods presents non-trivial computational and analytical challenges. First, non-diagonal mass matrices originating from standard high-order finite elements introduce spatial couplings, which can restrict the direct applicability of fast banded Sylvester solvers. Second, iterative solvers often rely on problem-specific preconditioning, and their efficiency can be sensitive to solver convergence or temporary rank inflation during rapid transient dynamics. Finally, establishing rigorous discrete well-posedness and $L^2$-stability remains a delicate question that warrants further formal analysis.

To overcome these fundamental mathematical and algorithmic challenges, this paper presents a BUG-MLFEM for 2D reaction-diffusion equations under Dirichlet, Neumann, and periodic BCs. By incorporating Gauss-Lobatto numerical quadrature into tensor-product finite element spaces, we obtain strictly diagonal mass matrices, preserving optimal $O(h^{k+1})$ spatial accuracy while completely eliminating mass coupling \cite{li2020arbitrarily}. Our main contributions are as follows.
\begin{itemize}
    \item[(i)] \textbf{Theoretical rigor and well-posedness:} 
    By recasting the mass-lumped finite element system into the WDLRA framework, we demonstrate the exact equivalence between the BUG integrator and MLFEM on low-rank subspaces, thereby ensuring the rigorous well-posedness of the discrete subproblems at each time step.
    \item[(ii)] \textbf{High-order spatial discretization:} By incorporating Gauss--Lobatto quadrature, the spatial mass matrices become strictly diagonal and positive definite, which eliminates inter-element mass coupling while retaining the optimal $O(h^{k+1})$ spatial error for $P_k$ finite elements. Numerical experiments confirm first-order convergence in time and $(k+1)$th-order convergence in space in the $L^2$-norm.
    \item[(iii)] \textbf{Low-cost non-iterative Sylvester solvers:} For Dirichlet and Neumann BCs, the $K/L$-steps naturally reduce to sparse-by-small Sylvester equations $AX + XB = C$, where $A$ is large and banded, and $B \in \mathbb{R}^{r \times r}$ is small and symmetric. By diagonalizing the small matrix $B$, the system is completely decoupled into $r$ independent, banded linear equations. This allows direct solving without any Krylov iterations, preconditioners, achieving optimal $O((m+n)r^2)$ computational complexity.

    \item[(iv)] \textbf{Handling periodic BCs via Woodbury correction:} For periodic BCs, corner wrap-around entries in stiffness matrices are handled using a rank-2 Sherman-Morrison-Woodbury update, preserving purely banded solves at negligible additional cost.

    \item[(v)] \textbf{Optimal computational and storage complexity:} 
    The proposed solver reduces the computational complexity per time step from $O(m^3+n^3)$ (or $O((m^2+n^2)r)$) down to $O((m+n)r^2)$ and the memory requirement down to $O((m+n)r)$. Numerical tests show up to an $81\times$ computational speedup over full-rank integrators on fine grids ($N=160$).
\end{itemize}

The rest of this paper is organized as follows. In \Cref{sec:mde}, we formulate the model problem, present its matrix differential equation, and introduce the full-rank MLFEM. In \Cref{DLRA_for_the_Diff_Reac_Eqn}, we develop the WDLRA, establish the equivalence between the BUG integrator and the MLFEM, and prove the well-posedness of the resulting BUG scheme. \Cref{sec:solvers} presents low-cost, non-iterative Sylvester solvers for different boundary conditions and analyzes their computational complexity. \Cref{sec:experiments} presents numerical experiments on convergence rates and computational efficiency. Conclusions are given in \Cref{sec:conclusion}. Finally, augmented BUG integrators enabled by the proposed non-iterative solvers are provided in \Cref{abug}.

\section{Model problem and matrix differential equation}\label{sec:mde}

In this paper, we consider the two-dimensional reaction-diffusion equation on the rectangular domain $\Omega=[a,b]\times[c,d]\subset\mathbb{R}^{2}$:
\begin{equation}
w_{t}=\Delta w+\alpha w, \quad (x,y)\in\Omega, \; t\in(0,T],
\label{diffusion}
\end{equation}
with the initial condition $w(x,y,0)=w_{0}(x,y),$
where $\alpha\le0$ is the reaction coefficient and $w_{0}$ is given. We consider one of the following BCs on $\partial\Omega$:
\begin{enumerate}
    \item[(i)] Homogeneous Dirichlet: $w=0$.
    \item[(ii)] Homogeneous Neumann: $\frac{\partial w}{\partial n}=0$, where $n$ is the outward unit normal.
    \item[(iii)] Periodic: $w(a,y,t)=w(b,y,t)$, $w_{x}(a,y,t)=w_{x}(b,y,t)$, $w(x,c,t)=w(x,d,t)$, and $w_{y}(x,c,t)=w_{y}(x,d,t)$.
\end{enumerate}

%==============================================================================================
\subsection{Mass-lumped finite element space}
%==============================================================================================

We formulate the matrix differential equation associated with problem \eqref{diffusion}.
We partition the rectangular domain
\(
\Omega=[a,b]\times[c,d]
\)
into \(M\) and \(N\) uniform elements in the \(x\)- and
\(y\)-directions, respectively. Let
\[
a=\xi_0<\xi_1<\cdots<\xi_M=b,
\qquad
c=\eta_0<\eta_1<\cdots<\eta_N=d
\]
be the mesh nodes in each direction, with mesh sizes $h_x=\frac{b-a}{M}$, $h_y=\frac{d-c}{N}.$
The corresponding one-dimensional subintervals are $I_i^x=[\xi_{i-1},\xi_i]$, $i=1,\ldots,M,$ and $I_j^y=[\eta_{j-1},\eta_j]$, $j=1,\ldots,N$, respectively.
Therefore, the domain is partitioned into rectangular cells as 
\(
\Omega
=
\bigcup_{i=1}^{M}\bigcup_{j=1}^{N}
\left(I_i^x\times I_j^y\right).
\)

In each coordinate direction, we use $k+1$ Gauss--Lobatto
quadrature points on every mesh interval. More precisely, for
$I_i^x=[\xi_{i-1},\xi_i]$, let
\(
x_{(i-1)k+\ell},\
\omega_\ell^x,\
\ell=1,\ldots,k+1,
\)
denote the Gauss--Lobatto quadrature points and the corresponding
quadrature weights, respectively. Thus,
\[
x_{(i-1)k+1}=\xi_{i-1},
\qquad
x_{ik+1}=\xi_i.
\]
Similarly, for $I_j^y=[\eta_{j-1},\eta_j]$, let
\(
y_{(j-1)k+\ell},\
\omega_\ell^y,\
\ell=1,\ldots,k+1,
\)
denote the corresponding Gauss--Lobatto quadrature points and
quadrature weights, with
\[
y_{(j-1)k+1}=\eta_{j-1},
\qquad
y_{jk+1}=\eta_j.
\]

The global quadrature weights are obtained by assembling the local
Gauss--Lobatto weights at the coinciding mesh nodes. More precisely,
for a direction with $L$ elements, we define the assembled weights
\[
\bar{\omega}_{(i-1)k+\ell}
=
\begin{cases}
\omega_1, & i=1,\ \ell=1,\\[1mm]
\omega_\ell, & \ell=2,\ldots,k,\\[1mm]
\omega_{k+1}+\omega_1,
& i=1,\ldots,L-1,\ \ell=k+1,\\[1mm]
\omega_{k+1}, & i=L,\ \ell=k+1.
\end{cases}
\]
Here, $\omega_\ell=\omega_\ell^x$ and $L=M$ in the $x$-direction,
while $\omega_\ell=\omega_\ell^y$ and $L=N$ in the $y$-direction,
for $\ell=1,\ldots,k+1$.
Since the Gauss--Lobatto endpoint weights satisfy
$\omega_1=\omega_{k+1}$, the interior mesh interfaces receive the
combined weight $2\omega_{k+1}$, while each physical boundary node
receives only one endpoint contribution. Applying this definition in
the $x$- and $y$-directions, respectively, gives
% \[
% \{\bar{\omega}_p^x\}_{p=1}^{Mk+1}
% \quad\text{and}\quad
% \{\bar{\omega}_q^y\}_{q=1}^{Nk+1}.
% \]
the assembled global Gauss--Lobatto quadrature nodes and weights
\[
\{(x_p,\bar{\omega}_p^x)\}_{p=1}^{Mk+1},
\qquad
\{(y_q,\bar{\omega}_q^y)\}_{q=1}^{Nk+1},
\]
where the weights at coinciding element endpoints are summed.

Using the Gauss--Lobatto quadrature, we define the mass-lumped inner
products in one and two dimensions, respectively, by
\begin{equation}\label{1dinner}
(f,g)_h
=
\sum_p \bar{\omega}_p f(x_p)g(x_p)
\end{equation}
and
\begin{equation}\label{mass_lumped_inner_product}
(f,g)_h
=
\sum_{p=1}^{Mk+1}
\sum_{q=1}^{Nk+1}
\bar{\omega}_p^x\bar{\omega}_q^y
f(x_p,y_q)g(x_p,y_q).
\end{equation}
The associated mass-lumped $L^2$ norm is $\|f\|_h=(f,f)_h^{1/2}$.

Define the one-dimensional finite element spaces
\begin{align}
V_{x,h}
&=
\left\{
\varphi\in V_x:
\left.\varphi\right|_{I_i^x}\in P_k,
\quad i=1,\ldots,M
\right\},
\\
V_{y,h}
&=
\left\{
\psi\in V_y:
\left.\psi\right|_{I_j^y}\in P_k,
\quad j=1,\ldots,N
\right\},
\end{align}
where $P_k$ denotes the space of polynomials of degree at most $k$.
The spaces $V_x$ and $V_y$ incorporate the prescribed boundary
conditions. Specifically, for homogeneous Dirichlet, homogeneous
Neumann, and periodic BCs, respectively, we set
\[
\begin{array}{c|c|c}
\text{Boundary condition}
& V_x & V_y \\ \hline
\text{Homogeneous Dirichlet}
& H_0^1(I^x) & H_0^1(I^y)\\
\text{Homogeneous Neumann}
& H^1(I^x) & H^1(I^y)\\
\text{Periodic}
& H_{\mathrm{per}}^1(I^x) & H_{\mathrm{per}}^1(I^y).
\end{array}
\]

The two-dimensional space is defined by
\(
V=V_x\otimes V_y,
\)
and the corresponding tensor-product finite element space is
\begin{equation}\label{FEMspace}
V_h
=
V_{x,h}\otimes V_{y,h}
=
\left\{
v\in V:
\left.v\right|_K\in Q_k,
\quad \forall K\in\mathcal{T}_h
\right\},
\end{equation}
where
\(
\mathcal{T}_h
=
\left\{
K_{ij}=I_i^x\times I_j^y:
1\leq i\leq M,\;
1\leq j\leq N
\right\}
\)
is the rectangular mesh, and
$Q_k=P_k\otimes P_k$
is the tensor-product polynomial space consisting of polynomials of
degree at most $k$ in each coordinate.

Depending on the boundary conditions, we denote
\begin{equation}
m = Mk + \delta_x, \qquad
n = Nk + \delta_y,
\end{equation}
where
\(
(\delta_x,\delta_y)=
\begin{cases}
(-1,-1), & \text{homogeneous Dirichlet},\\
(+1,+1), & \text{homogeneous Neumann},\\
(0,0), & \text{periodic}.
\end{cases}
\)

Let
\(\{\varphi_i(x)\}_{i=1}^m\subset V_{x,h}^k\) and
\(\{\psi_j(y)\}_{j=1}^n\subset V_{y,h}^k\) denote the nodal basis
functions satisfying
\begin{equation}
\label{eq:noal_func}
  \varphi_i(x_p)=\delta_{ip},
  \qquad
  \psi_j(y_q)=\delta_{jq}.
\end{equation}
The corresponding vector-valued basis functions are
\begin{equation}
  \Phi(x)
  =
  [\varphi_1(x),\ldots,\varphi_m(x)]^\top,
  \qquad
  \Psi(y)
  =
  [\psi_1(y),\ldots,\psi_n(y)]^\top.
\end{equation}

The one-dimensional lumped mass matrices are defined by
\begin{equation}
\label{eq:M_x_M_y}
  M_x
  =
  \bigl((\varphi_i,\varphi_p)_h^x\bigr)_{i,p=1}^m,
  \qquad
  M_y
  =
  \bigl((\psi_j,\psi_q)_h^y\bigr)_{j,q=1}^n,
\end{equation}
where \((\cdot,\cdot)_h^x\) and \((\cdot,\cdot)_h^y\) denote the
one-dimensional mass-lumped inner products in the \(x\)- and
\(y\)-directions, respectively. These matrices are diagonal and postitive definite as a
consequence of the Gauss--Lobatto mass lumping.

The corresponding one-dimensional stiffness matrices are
\begin{equation}\label{AxAy}
A_x
=
\bigl((\partial_x\varphi_i,\partial_x\varphi_p)_h\bigr)_{i,p=1}^m,
\qquad
A_y
=
\bigl((\partial_y\psi_j,\partial_y\psi_q)_h\bigr)_{j,q=1}^n.
\end{equation}
Both $A_x$ and $A_y$ are sparse for all three types of boundary
conditions, while their bandwidth depends on the boundary conditions
and the polynomial degree.

%==============================================================================================
\subsection{Semi-discrete formulation and matrix differential equation}
%==============================================================================================

We seek a semi-discrete finite element solution \(w_h(t)\in V_h\) in the
form
\begin{equation}\label{masssol}
  w_h(x,y,t)=\sum_{i=1}^m\sum_{j=1}^nW_{ij}\phi_i(x)\psi_j(y)=\Phi(x)^\top W(t)\Psi(y),
\end{equation}
where \(W(t)\in\mathbb{R}^{m\times n}\) is the time-dependent coefficient
matrix.
The semi-discrete MLFEM is to find
\(w_h(t)\in V_h\) such that
\begin{equation}
  (w_{h,t},v_h)_h
  =
  -(\nabla w_h,\nabla v_h)_h
  +\alpha(w_h,v_h)_h,
  \qquad
  \forall v_h\in V_h,
  \label{eq:2d-semidiscrete}
\end{equation}
with the initial condition 
\begin{equation}
  (w_h(0),v_h)_h
  =
  (w_0,v_h)_h,
  \qquad
  \forall v_h\in V_h.
  %\label{eq:2d-initial}
\end{equation}

For \(z_h,w_h\in V_h\), let \(Z,W\in\mathbb{R}^{m\times n}\) denote their
coefficient matrices with
\begin{equation}
\label{eq:func_zh}
  z_h=\Phi^\top Z\Psi,
  \qquad
  w_h=\Phi^\top W\Psi.
\end{equation}
We introduce the \(M\)-weighted Frobenius inner product and norm
\begin{equation}
  (Z,W)_M
  :=
  (M_x Z M_y,W)_F, \quad \|Z\|_M^2 = (Z,Z)_M,
  \label{eq:M-inner-product}
\end{equation}
where \((\cdot,\cdot)_F\) denotes the Frobenius inner product. 
%We also define the one-dimensional discrete diffusion operators by
% \begin{equation}
%   L_x=-M_x^{-1}A_x,
%   \qquad
%   L_y=-M_y^{-1}A_y.
%   \label{eq:1d-diffusion-operators}
% \end{equation}

We recall the following matrix representation of inner products.
\begin{lemma}
\label{lem:inner-products} \cite{yang2026second}
For \(z_h,w_h\in V_h\) with coefficient matrices
\(Z,W\in\mathbb{R}^{m\times n}\), the mass-lumped inner products satisfy
\begin{align}
  (z_h,w_h)_h
  &=
  (Z,W)_M,
  \label{eq:ip-mass}
  \\
  (\nabla z_h,\nabla w_h)_h
  &=
  -(L_xZ+ZL_y^\top,W)_M,
  \label{eq:ip-stiff}
\end{align}
where the matrices
\begin{equation}
  L_x=-M_x^{-1}A_x,
  \qquad
  L_y=-M_y^{-1}A_y.
  \label{eq:1d-diffusion-operators}
\end{equation}
\end{lemma}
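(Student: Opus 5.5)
The plan is to expand both functions in the tensor-product nodal basis \eqref{eq:func_zh} and use bilinearity to reduce everything to one-dimensional mass-lumped inner products. Then we identify those with the entries of $M_x,M_y,A_x,A_y$ from \eqref{eq:M_x_M_y} and \eqref{AxAy}.

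The key structural fact is that the two-dimensional quadrature \eqref{mass_lumped_inner_product} is a tensor product of the one-dimensional rules \eqref{1dinner}. Hence, for separable functions,
\begin{equation*}
(f_1(x)g_1(y),f_2(x)g_2(y))_h=(f_1,f_2)_h^x\,(g_1,g_2)_h^y .
\end{equation*}

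\textbf{Mass identity \eqref{eq:ip-mass}.} I write $z_h=\sum_{i,j}Z_{ij}\varphi_i\psi_j$ and $w_h=\sum_{p,q}W_{pq}\varphi_p\psi_q$. Expanding and using the factorization gives
\begin{equation*}
(z_h,w_h)_h=\sum_{i,j,p,q}Z_{ij}(M_x)_{ip}(M_y)_{jq}W_{pq}.
\end{equation*}
The right-hand side equals $\operatorname{tr}\bigl((M_xZM_y)^\top W\bigr)=(M_xZM_y,W)_F=(Z,W)_M$, using the symmetry of $M_x$ and $M_y$. Diagonality of the mass matrices is not even needed for this step. For periodic BCs the quadrature sum has one redundant wrap-around node; I will note that the nodes are identified and their weights merged, consistent with $m=Mk$.

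\textbf{Stiffness identity \eqref{eq:ip-stiff}.} I split $(\nabla z_h,\nabla w_h)_h$ into the $\partial_x$ and $\partial_y$ parts. Since $\partial_x(\varphi_i\psi_j)=\varphi_i'\psi_j$, the same factorization gives
\begin{equation*}
(\partial_x z_h,\partial_x w_h)_h=\sum Z_{ij}(A_x)_{ip}(M_y)_{jq}W_{pq}=(A_xZM_y,W)_F .
\end{equation*}
Similarly, the $y$-part equals $(M_xZA_y,W)_F$, using $A_y^\top=A_y$.

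It remains to match this with the stated form:
\begin{equation*}
-(L_xZ+ZL_y^\top,W)_M=(M_xM_x^{-1}A_xZM_y+M_xZA_y^\top M_y^{-\top}M_y,W)_F=(A_xZM_y+M_xZA_y,W)_F .
\end{equation*}
This uses $M_y^{-\top}=M_y^{-1}$ and $A_y^\top=A_y$. The two expressions agree, which completes the identity.

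\textbf{Main obstacle.} The only step needing care is the factorization of the two-dimensional quadrature into one-dimensional products, including the assembled weights at shared element endpoints and the periodic identification. Once that is written out via \eqref{mass_lumped_inner_product}, everything else is index bookkeeping. The sign and transpose conventions in the last display are the place where an error could slip in.
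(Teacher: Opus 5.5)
Your proof is correct and is the standard argument for this result; the paper states it without proof and cites prior work. The argument is: expand in the tensor-product nodal basis, factor the tensor-product Gauss--Lobatto rule into the one-dimensional Gram matrices $M_x,M_y,A_x,A_y$, and use their symmetry. The one point worth making explicit is that $\partial_x z_h$ is discontinuous at element interfaces, so the quadrature in the gradient term (and in $A_x$) must be read element by element rather than with assembled weights. The factorization still holds on each $I_i^x\times I_j^y$, and summing over elements yields $(A_x)_{ip}(M_y)_{jq}$ exactly as you claim.
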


Substituting \eqref{masssol} into
\eqref{eq:2d-semidiscrete} and applying \Cref{lem:inner-products}
yields the matrix variational problem: find
\(W(t)\in\mathbb{R}^{m\times n}\) such that %for $\forall V\in\mathbb{R}^{m\times n}$, it holds
\begin{equation}
  (\partial_tW(t),V)_M
  =
  (L_xW(t)+W(t)L_y^\top,V)_M
  +\alpha(W(t),V)_M, \ \  \forall V\in\mathbb{R}^{m\times n}
  \label{eq:matrix-variational}
\end{equation}
with initial matrix \(W^0\) obtained by the mass-lumped projection of
\(w_0\) onto \(V_h\).

Since the \(M\)-weighted Frobenius inner product is positive definite,
\eqref{eq:matrix-variational} is equivalent to the following matrix
differential equation:
\begin{equation}
  \label{eq:matODE}
  \partial_tW(t)
  =
  \mathscr{L}(W(t)),
  \qquad
  W(0)=W^0,
\end{equation}
where the matrix reaction-diffusion operator
\(\mathscr{L}:\mathbb{R}^{m\times n}\to\mathbb{R}^{m\times n}\) is defined by
\begin{equation}
  \label{eq:LxLy}
  \mathscr{L}(W)
  =
  L_xW+WL_y^\top+\alpha W.
\end{equation}
Thus, the two-dimensional mass-lumped finite element discretization \eqref{eq:2d-semidiscrete} is
equivalent to the linear matrix differential equation
\eqref{eq:matODE}.

\subsection{FR-MLFEM with conventional Sylvester solver}
We consider the fully discrete scheme corresponding to \eqref{eq:2d-semidiscrete}, which we refer to as the full-rank MLFEM (FR-MLFEM). For the temporal discretization, we employ the backward Euler method with time step size $\Delta t>0$ and set $t^\mathfrak{n}=\mathfrak{n}\Delta t$. 
For each $\mathfrak{n} \ge 0$, let $w_h^\mathfrak{n}:= w_h(x,y,t^\mathfrak{n}) \in V_h$.
For simplicity, we denote
\begin{equation}
    D_t v^{\mathfrak{n}+1} = \frac{v^{\mathfrak{n}+1}-v^{\mathfrak{n}}}{\Delta t},
\end{equation}
where $v$ can be any function (or matrix in the later sections).
Given $w_h^\mathfrak{n}\in V_h$, the first-order implicit FR-MLFEM seeks $w_h^{\mathfrak{n}+1}\in V_h$ such that,
\begin{equation}
\label{eq:fd_weak_dr}
\left(
D_t w_h^{\mathfrak{n}+1},v_h
\right)_h
+
\left(
\nabla w_h^{\mathfrak{n}+1},\nabla v_h
\right)_h
-
\alpha
\left(
w_h^{\mathfrak{n}+1},v_h
\right)_h
=0, \quad v_h\in V_h.
\end{equation}

The well-posedness and stability of the the FR-MLFEM scheme \eqref{eq:fd_weak_dr} are summarized in the following result.

\begin{proposition}[Well-Posedness and $L^2$-Stability]
\label{lem:wellposed_full_rank}
For any $\Delta t>0$, let $w_h^\mathfrak{n}\in V_h$ be given. The FR-MLFEM scheme \eqref{eq:fd_weak_dr} admits a unique solution $w_h^{\mathfrak{n}+1}\in V_h$. Moreover, the solution satisfies the $L^2$-stability estimate
\begin{equation}
\label{eq:L2_stability_full_rank}
\|w_h^{\mathfrak{n}+1}\|_{h}
\leq
\|w_h^\mathfrak{n}\|_{h}.
\end{equation}
\end{proposition}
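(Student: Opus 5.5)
The plan is to treat \eqref{eq:fd_weak_dr} as a square linear system on the finite-dimensional space $V_h$ and get existence, uniqueness and stability from one coercivity estimate. Multiplying by $\Delta t$, the scheme reads: find $w_h^{\mathfrak{n}+1}\in V_h$ with
\[
a(w_h^{\mathfrak{n}+1},v_h)=(w_h^{\mathfrak{n}},v_h)_h,\qquad
a(u,v):=(1-\alpha\Delta t)(u,v)_h+\Delta t(\nabla u,\nabla v)_h,
\]
for all $v_h\in V_h$.

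\textbf{Well-posedness.} The first step is to note that $(\cdot,\cdot)_h$ is a genuine inner product on $V_h$. By \Cref{lem:inner-products}, $(z_h,z_h)_h=\|Z\|_M^2=(M_xZM_y,Z)_F$, and $M_x,M_y$ are diagonal with strictly positive entries, so this quantity vanishes only if $Z=0$. Next, $(\nabla u,\nabla u)_h\ge 0$, because it is a sum of positive quadrature weights times squares of derivative values. Equivalently, by \eqref{eq:ip-stiff} it equals $(A_xUM_y,U)_F+(M_xUA_y,U)_F$, which is nonnegative since $A_x,A_y$ are Gram matrices. Since $\alpha\le 0$, this gives
\[
a(u,u)\ge(1-\alpha\Delta t)\|u\|_h^2\ge\|u\|_h^2 .
\]
If $a(u,v)=0$ for all $v\in V_h$, taking $v=u$ forces $u=0$. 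So the linear map induced by $a$ on $V_h$ is injective. As $V_h$ is finite-dimensional, the map is also surjective, and the solution exists and is unique for every $\Delta t>0$. In matrix form this says the Sylvester operator $W\mapsto (1-\alpha\Delta t)W-\Delta t(L_xW+WL_y^\top)$ is invertible.

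\textbf{Stability.} Take $v_h=w_h^{\mathfrak{n}+1}$ in the scheme. The coercivity bound gives
\[
\|w_h^{\mathfrak{n}+1}\|_h^2\le a(w_h^{\mathfrak{n}+1},w_h^{\mathfrak{n}+1})=(w_h^{\mathfrak{n}},w_h^{\mathfrak{n}+1})_h\le\|w_h^{\mathfrak{n}}\|_h\|w_h^{\mathfrak{n}+1}\|_h,
\]
where the last step is Cauchy--Schwarz for the discrete inner product. Dividing by $\|w_h^{\mathfrak{n}+1}\|_h$ (the case where it is zero is trivial) yields \eqref{eq:L2_stability_full_rank}. An alternative is the identity $2(a-b,a)=\|a\|^2-\|b\|^2+\|a-b\|^2$, which gives the same bound together with a dissipation term.

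\textbf{Main obstacle.} The only delicate point is checking that the lumped quantities are positive, and nonnegative for the stiffness term, under all three boundary conditions. Positivity of $(\cdot,\cdot)_h$ on $V_h$ relies on the Gauss--Lobatto weights being positive and on the nodal basis being unisolvent at the quadrature nodes; for periodic boundary conditions, the identified endpoint must carry the combined weight. Nonnegativity of $(\nabla u,\nabla u)_h$ holds because it is a weighted sum of squares with positive weights, and this argument does not depend on the boundary condition.
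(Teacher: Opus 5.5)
Your proof is correct and uses the same energy argument the paper relies on. The paper states this proposition without a separate proof, but its proof of \Cref{thm:wp32} does exactly this on $V_0^{\mathfrak{n}+1}$: it tests the homogeneous problem with the difference of two solutions to obtain $\bigl(\frac{1}{\Delta t}-\alpha\bigr)\|\delta w\|_h^2+(\nabla\delta w,\nabla\delta w)_h=0$, uses finite dimensionality for existence, and gets stability from the polarization identity you list as an alternative to Cauchy--Schwarz. One point in your favor is that you rely only on the nonnegativity of the lumped form $(\nabla u,\nabla u)_h=(A_xUM_y,U)_F+(M_xUA_y,U)_F$. The paper instead identifies it with $\|\nabla u\|_{L^2(\Omega)}^2$, which is not exact under two-dimensional Gauss--Lobatto lumping, although only its sign matters.
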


The matrix equation of the first-order FR-MLFEM \eqref{eq:fd_weak_dr} is to solve for
$W^{\mathfrak{n}+1}\in\mathbb{R}^{m\times n}$ from
\begin{equation}\label{frfemmat}
(1-\Delta t\,\alpha)W^{\mathfrak{n}+1}
-\Delta t\,L_xW^{\mathfrak{n}+1}
-\Delta t\,W^{\mathfrak{n}+1}L_y^\top
=W^{\mathfrak{n}},
\end{equation}
which can be written as the Sylvester equation
\begin{equation}\label{sylvester}
    AW^{\mathfrak{n}+1}+W^{\mathfrak{n}+1}B=W^{\mathfrak{n}},
\end{equation}
where $A:=(1-\Delta t\,\alpha)I_m-\Delta t\,L_x$, $
B:=-\Delta t\,L_y^\top.$
Since $L_x$ and $L_y$ inherit the banded sparsity of $A_x$ and $A_y$
defined in \eqref{eq:1d-diffusion-operators}, both
$A\in\mathbb{R}^{m\times m}$ and $B\in\mathbb{R}^{n\times n}$ are sparse,
but not symmetric. More specifically, they are banded under homogeneous
Dirichlet and Neumann BCs, with additional wrap-around entries under periodic BCs. Moreover, $A$ and $B$ depend only on $L_x$, $L_y$,
$\alpha$, and $\Delta t$, and remain fixed in time marching.

Solving \eqref{sylvester} using the Bartels--Stewart algorithm \cite{bartels1972algorithm}
requires $O(m^3+n^3)$ operations to compute the Schur factorizations of $A$ and $B$, followed
by $O(m^2n+mn^2)$ operations for the transformations and the solution of the resulting triangular
Sylvester equation. Since $A$ and $B$ are independent of the time step $\mathfrak{n}$, their Schur
factorizations need to be computed only once and can be reused at all subsequent time steps.
Therefore, after this one-time preprocessing cost, the computational complexity per time step is
$O(m^2n+mn^2)$.

Although the banded matrices $A$ and $B$ require only $O(m+n)$ storage, their Schur factors are generally dense \cite{golub1979hessenberg} and therefore require $O(m^2+n^2)$ storage. These factors can be computed once and reused throughout the time integration. In addition, storing the solution matrix $W^{\mathfrak{n}}\in\mathbb{R}^{m\times n}$ requires $O(mn)$ storage. Hence, the overall storage requirement is $O(m^2+n^2)$.

%==============================================================================================
\section{Weighted dynamical low-rank approximation}\label{DLRA_for_the_Diff_Reac_Eqn}
%==============================================================================================

Let $\mathcal{M}_r$ denote the manifold of rank-$r$ matrices ($r \ll \min\{m,n\}$). Throughout this work, calligraphic notation denotes low-rank matrices, while hat notation indicates their corresponding finite element approximations. Specifically, for a finite element function $z_h \in {V}_h$ with coefficient matrix $Z \in \mathbb{R}^{m \times n}$, we approximate $Z$ by a rank-$r$ matrix $\mathcal{Z} \in \mathcal{M}_r$. The dynamical low-rank finite element function associated with $\mathcal{Z}$ is denoted by $\hat{z}_h$, which lies in a subspace of ${V}_h$.

\subsection{WDLRA formulations}
The Weighted Dynamical Low-Rank Approximation (WDLRA) is a variant of DLRA that incorporates $M$-weighted norm, thereby ensuring the equivalence between its Galerkin formulation and the corresponding matrix variational problem \eqref{eq:matrix-variational}.

\begin{definition}[WDLRA]
\label{def:weighted_dlra}
The WDLRA of the matrix differential equation \eqref{eq:matODE} is defined as the rank-$r$ matrix
$\mathcal{Z}(t) \in \mathcal{M}_r$ satisfying
\begin{equation}\label{eq:weighted_dlra_dr}
\partial_t \mathcal{Z}
=
\underset{\delta Z \in T_{\mathcal{Z}}\mathcal{M}_r}{\arg\min}
\,
\|\delta Z-\mathscr{L}(\mathcal{Z})\|_M,
\end{equation}
with initial condition
\(
\mathcal{Z}(0)=\mathcal{Z}_0\in\mathcal{M}_r,
\)
where $T_{\mathcal{Z}}\mathcal{M}_r$ denotes the tangent space of $\mathcal{M}_r$ at $\mathcal{Z}$.
\end{definition}

% \textbf{Tangent Space Structure:}
Let $\mathcal{Z} = U S V^\top \in \mathcal{M}_r$ satisfy the
weighted orthogonality conditions
\[
U^\top M_x U = V^\top M_y V = I_r,
\]
where $U \in \mathbb{R}^{m\times r}$,
$S \in \mathbb{R}^{r\times r}$, and
$V \in \mathbb{R}^{n\times r}$.
The tangent space of $\mathcal{M}_r$ at $\mathcal{Z}$ is %given by
\begin{equation*}
\label{eq:tangent_space}
T_{\mathcal{Z}}\mathcal{M}_r
=
\left\{
\delta U S V^\top
+
U\delta S V^\top
+
US\delta V^\top
:
U^\top M_x\delta U=0,\;
V^\top M_y\delta V=0
\right\},
\end{equation*}
where
$\delta U\in\mathbb{R}^{m\times r}$,
$\delta S\in\mathbb{R}^{r\times r}$, and
$\delta V\in\mathbb{R}^{n\times r}$.
Under the gauge conditions
\[
U^\top M_x\delta U=0,
\qquad
V^\top M_y\delta V=0,
\]
each $\delta Z\in T_{\mathcal{Z}}\mathcal{M}_r$ admits the unique
decomposition
\begin{align}
\delta Z
&=
\delta U S V^\top
+
U\delta S V^\top
+
US\delta V^\top
\notag\\
&=
P_U^\perp M_x\delta Z M_y P_V
+
P_U M_x\delta Z M_y P_V
+
P_U M_x\delta Z M_y P_V^\perp,
\label{eq:decomp_Z}
\end{align}
%The corresponding components are given by
where the corresponding components are given by
\begin{equation}\label{eq:decomp_USV}
\begin{aligned}
\delta U &= P_U^{\perp} M_x \delta Z M_y V S^{-1}, \\
\delta S &= U^{\top} M_x \delta Z M_y V, \\
\delta V &= P_V^{\perp} M_y \delta Z^{\top} M_x U S^{-\top}.
\end{aligned}
\end{equation}
The associated weighted projection operators are defined by
\begin{equation}
    \label{eq:projectors}
    P_U = U U^\top, \quad P_U^\perp = M_x^{-1} - P_U,
    \quad P_V = V V^\top, \quad P_V^\perp = M_y^{-1} - P_V.
\end{equation}

\begin{proposition}
Suppose that $\mathcal{Z}=USV^\top\in\mathcal{M}_r$ satisfies \Cref{def:weighted_dlra}, with initial data
\[
\mathcal{Z}(0)=U^0S^0(V^0)^\top\in\mathcal{M}_r,
\qquad
(U^0)^\top M_xU^0=I_r,
\qquad
(V^0)^\top M_yV^0=I_r.
\]
Then, WDLRA \eqref{eq:weighted_dlra_dr} is equivalent to the following formulations:
\begin{enumerate}
\item[(i)] \textbf{Galerkin formulation.}
Find $\partial_t\mathcal{Z}\in T_{\mathcal{Z}}\mathcal{M}_r$ such that
\begin{align*}
(\partial_t\mathcal{Z}-\mathscr{L}(\mathcal{Z}),\delta Z)_M=0,
\qquad
\forall\,\delta Z\in T_{\mathcal{Z}}\mathcal{M}_r,
\end{align*}
with the initial condition $\mathcal{Z}(0)=U^0S^0(V^0)^\top$.

\item[(ii)] \textbf{Equations of motion.}
The factors $U$, $S$, and $V$ satisfy
\begin{align*}
\dot U
&=P_U^\perp M_x\mathscr{L}(\mathcal{Z})M_yVS^{-1},
&
\dot S
&=U^\top M_x\mathscr{L}(\mathcal{Z})M_yV,
\\
\dot V
&=P_V^\perp M_y\mathscr{L}(\mathcal{Z})^\top M_xUS^{-\top},
\end{align*}
with initial conditions
$U(0) = U^0$, $S(0) = S^0$, $V(0) = V^0$.

\item[(iii)] \textbf{KLS system.}
Let $K=US$ and $L=VS^\top$. Then $K$, $L$, and $S$ satisfy
\begin{equation}\label{KLS}
    \begin{aligned}
        \dot K
&=\mathscr{L}(KV^\top)M_yV,
\quad 
\dot L
=\mathscr{L}(UL^\top)^\top M_xU,
\\
\dot S
&=U^\top M_x\mathscr{L}(USV^\top)M_yV,
    \end{aligned}
\end{equation}
with initial conditions
 $K(0) = U^0 S^0$, $L(0) = V^0 (S^0)^\top$, $S(0) = S^0$.
\end{enumerate}
\end{proposition}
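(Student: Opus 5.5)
The plan is to prove the chain (WDLRA) $\Leftrightarrow$ (i) $\Leftrightarrow$ (ii) $\Rightarrow$ (iii), and then recover (ii) from (iii). Everything works in the Hilbert space $(\mathbb{R}^{m\times n},(\cdot,\cdot)_M)$. The $M$-weighted projectors \eqref{eq:projectors} play the role of orthogonal projectors once they are composed with the weights: $P_U M_x$ and $P_U^\perp M_x$ are complementary $M_x$-orthogonal projections, because $U^\top M_xU=I_r$ gives $(P_UM_x)^2=P_UM_x$ and $M_xP_UM_x$ symmetric. The analogous statements hold for $V$.

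\textbf{WDLRA $\Leftrightarrow$ (i).} Since $T_{\mathcal{Z}}\mathcal{M}_r$ is a linear subspace and $\|\cdot\|_M$ is induced by the positive definite inner product $(\cdot,\cdot)_M$, the minimizer in \eqref{eq:weighted_dlra_dr} is the $M$-orthogonal projection of $\mathscr{L}(\mathcal{Z})$ onto the tangent space. It is characterized by the normal equations $(\partial_t\mathcal{Z}-\mathscr{L}(\mathcal{Z}),\delta Z)_M=0$ for all tangent $\delta Z$, and it is unique. This is the standard projection theorem, and it gives both directions.

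\textbf{(i) $\Leftrightarrow$ (ii).} First I write the $M$-orthogonal projector $\mathcal{P}_{\mathcal{Z}}$ onto $T_{\mathcal{Z}}\mathcal{M}_r$ explicitly, using \eqref{eq:decomp_Z}:
\[
\mathcal{P}_{\mathcal{Z}}(X)=P_UM_xXM_yP_V+P_U^\perp M_xXM_yP_V+P_UM_xXM_yP_V^\perp .
\]
I check three things: it maps into $T_{\mathcal{Z}}\mathcal{M}_r$ (the gauge conditions hold because $U^\top M_xP_U^\perp=0$); it fixes tangent vectors; and it is $M$-self-adjoint. The last point follows from the symmetry of $M_xP_UM_x$, $M_xP_U^\perp M_x=M_x-M_xUU^\top M_x$, and the corresponding $V$-expressions. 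Then (i) is equivalent to $\partial_t\mathcal{Z}=\mathcal{P}_{\mathcal{Z}}(\mathscr{L}(\mathcal{Z}))$.

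Next, with $\mathcal{Z}=USV^\top$, I differentiate to get $\partial_t\mathcal{Z}=\dot USV^\top+U\dot SV^\top+US\dot V^\top$. I impose the gauge $U^\top M_x\dot U=0$ and $V^\top M_y\dot V=0$, which is consistent with preserving $U^\top M_xU=I$. The uniqueness of the decomposition in \eqref{eq:decomp_USV}, applied with $\delta Z=\mathcal{P}_{\mathcal{Z}}(\mathscr{L}(\mathcal{Z}))$, then yields exactly the equations of motion in (ii). The simplifications needed are $P_U^\perp M_x\mathcal{P}_{\mathcal{Z}}(X)M_yV=P_U^\perp M_xXM_yV$ and similar, which reduce to $P_U^\perp M_xP_U=0$ and $V^\top M_yV=I$. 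Conversely, the ODEs in (ii) preserve the weighted orthonormality, since $\tfrac{d}{dt}(U^\top M_xU)=2\,\mathrm{sym}(U^\top M_x\dot U)=0$. Reassembling the product then gives (i).

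\textbf{(ii) $\Leftrightarrow$ (iii).} For $K=US$, I compute $\dot K=\dot US+U\dot S$. Substituting (ii) gives $(P_U^\perp+P_U)M_x\mathscr{L}(\mathcal{Z})M_yV=\mathscr{L}(\mathcal{Z})M_yV$, using $P_U+P_U^\perp=M_x^{-1}$, and $\mathcal{Z}=KV^\top$. The $L$-equation follows symmetrically, and the $S$-equation is identical. The initial data match by definition.

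\textbf{Main obstacle.} The step I expect to be hardest is justifying the $M$-self-adjointness of $\mathcal{P}_{\mathcal{Z}}$ and the gauge bookkeeping. The gauge freedom is what makes (ii) a choice of representative rather than a consequence, so the equivalence in (ii) must be stated modulo this gauge. Since (iii) involves $K$ with $V$ frozen, as in standard DLRA, I will read (iii) as the system obtained by combining the factor equations. I will only claim that (ii) implies (iii) and that (iii) determines $\mathcal{Z}=KV^\top=UL^\top$ consistently, rather than claiming an independent evolution.
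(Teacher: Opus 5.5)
The paper states this proposition without proof. It is the $M$-weighted version of the Koch--Lubich equivalences, and the paper records only the gauged decomposition \eqref{eq:decomp_Z}--\eqref{eq:decomp_USV}. Your argument follows that standard route, and its computations are correct. $\mathcal{P}_{\mathcal{Z}}$ is the $M$-orthogonal projector onto $T_{\mathcal{Z}}\mathcal{M}_r$. The identities $P_U^\perp M_xP_U=0$, $U^\top M_xP_U^\perp=0$, $P_V^\perp M_yV=0$ and $P_VM_yV=V$ give the reductions of $P_U^\perp M_x\mathcal{P}_{\mathcal{Z}}(X)M_yV$ and $U^\top M_x\mathcal{P}_{\mathcal{Z}}(X)M_yV$ you claim. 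The identity $P_U+P_U^\perp=M_x^{-1}$ gives the $K$- and $L$-equations. The self-adjointness you flag as the main obstacle is routine, and you can bypass it by testing (i) directly with $\delta U SV^\top$, $U\delta SV^\top$ and $US\delta V^\top$.

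\textbf{The real shortfall.} You end by claiming only (ii) $\Rightarrow$ (iii), although the statement asserts equivalence. Your reason for retreating is a misreading. Nothing is frozen in \eqref{KLS}: the $U$ and $V$ there are the current factors $U=KS^{-1}$ and $V=LS^{-\top}$, with $S$ invertible because $\mathcal{Z}\in\mathcal{M}_r$. Freezing $V$ belongs to the BUG substep \eqref{eq:k_eqn}, not to the continuous system. The converse is then immediate:
\[
\dot U=(\dot K-U\dot S)S^{-1}=(I-UU^\top M_x)\mathscr{L}(\mathcal{Z})M_yVS^{-1}=P_U^\perp M_x\mathscr{L}(\mathcal{Z})M_yVS^{-1}.
\]
Likewise $\dot V=(\dot L-V\dot S^\top)S^{-\top}=P_V^\perp M_y\mathscr{L}(\mathcal{Z})^\top M_xUS^{-\top}$, which together give (ii). Include this instead of weakening the claim.

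\textbf{Two smaller points to tighten.}

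First, your orthonormality argument is circular as written. The identity $U^\top M_xP_U^\perp=(I-U^\top M_xU)U^\top$ vanishes only once $U^\top M_xU=I$ is already known. Instead, set $E=I-U^\top M_xU$ and $A=U^\top M_x\mathscr{L}(\mathcal{Z})M_yVS^{-1}$. Then (ii) gives the linear ODE $\dot E=-(EA+A^\top E)$ with $E(0)=0$, so $E\equiv0$; the same argument works for $V$. This also covers the factors recovered from (iii).

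Second, for (i) $\Rightarrow$ (ii) you tacitly assume that gauged factors of the given $\mathcal{Z}$ exist. The clean argument has three steps:
\begin{enumerate}
\item Solve (ii) from $(U^0,S^0,V^0)$.
\item Show, as you already do, that $USV^\top$ satisfies (i).
\item Invoke uniqueness for $\dot{\mathcal{Z}}=\mathcal{P}_{\mathcal{Z}}\mathscr{L}(\mathcal{Z})$, whose right-hand side is smooth on $\mathcal{M}_r$. This works because $P_UM_x$ depends only on the column space of $\mathcal{Z}$, and $P_VM_y$ only on the column space of $\mathcal{Z}^\top$.
\end{enumerate}
This is the precise content of your remark that (ii) holds modulo the gauge.
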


\subsection{Implicit BUG integrator and BUG-MLFEM}\label{sec:bug}

To solve the WDLRA in \Cref{def:weighted_dlra}, we choose to solve the equivalent KLS system \eqref{KLS} using the basis-update \& Galerkin (BUG) integrator \cite{ceruti2022unconventional}. The backward Euler method is employed for the time discretization, resulting in an implicit BUG integrator. Given $\Delta t > 0$ and the factored rank-$r$ matrix
$\mathcal{W}^\mathfrak{n} = U^\mathfrak{n} S^\mathfrak{n} (V^\mathfrak{n})^\top$, with factors satisfying
\[
(U^\mathfrak{n})^\top M_x U^\mathfrak{n}
=
(V^\mathfrak{n})^\top M_y V^\mathfrak{n}
=
I_r,
\]
one time step yields the new rank-$r$ approximation at $t^{\mathfrak{n}+1}$,
\begin{equation}
    \mathcal{W}^{\mathfrak{n}+1}
    =
    U^{\mathfrak{n}+1} S^{\mathfrak{n}+1}
    (V^{\mathfrak{n}+1})^\top,
\end{equation}
where the updated factors satisfy
\[
(U^{\mathfrak{n}+1})^\top M_x U^{\mathfrak{n}+1}
=
(V^{\mathfrak{n}+1})^\top M_y V^{\mathfrak{n}+1}
=
I_r.
\]

We recall the generalized QR (GQR) decomposition introduced in
\cite{yin2025towards}. Given a symmetric positive definite matrix
$M\in\mathbb{R}^{m\times m}$, the GQR decomposition of
$K\in\mathbb{R}^{m\times r}$ is given by
\begin{equation}
    [U,R]=GQR(K,M),
\end{equation}
where the resulting factors $U \in \mathbb{R}^{m\times r}$ and $R\in \mathbb{R}^{r \times r}$ satisfy
\begin{equation}
    UR=K,
    \qquad
    U^\top M U=I_r.
\end{equation}
Then, the first-order implicit BUG
integrator is presented in \Cref{alg:KLS-Steps}.

\begin{remark}
The augmented BUG integrator \cite{ceruti2022rank} can also be incorporated into \Cref{alg:KLS-Steps}, and the algorithm is shown in \Cref{alg:AugBUG-Steps}. Since our primary objective is to develop efficient solvers, we focus on the standard BUG integrator for simplicity. The proposed solvers remain unchanged when applied to the augmented BUG setting.

\end{remark}

%=================================================================================
% KLS step Algorithm
%=================================================================================

\begin{algorithm}[htbp]
\caption{One step implicit BUG integrator for the KLS system \eqref{KLS}.}
\label{alg:KLS-Steps}
\begin{algorithmic}[1]

%\algcomment{\textbf{Input:}
\REQUIRE
$U^\mathfrak{n},\, S^\mathfrak{n},\, V^\mathfrak{n},\, \Delta t,\, M_x,\, M_y$.
\ENSURE
%\algcomment{\textbf{Output:} 
$U^{\mathfrak{n}+1},\, S^{\mathfrak{n}+1},\, V^{\mathfrak{n}+1}$.
\algcomment{\textbf{Step 1}: Update $U^\mathfrak{n} \to U^{\mathfrak{n}+1} \in \mathbb{R}^{m\times{r}}$ and $V^\mathfrak{n} \to V^{\mathfrak{n}+1} \in \mathbb{R}^{n\times{r}}$ in parallel:}
\STATE \textit{K-step:} Fix $V^\mathfrak{n}$, and solve $K^{\mathfrak{n}+1}$ from the $m \times r$ matrix equation
\begin{equation}
  D_t K^{\mathfrak{n}+1}
  = \mathscr{L}(K^{\mathfrak{n}+1} (V^\mathfrak{n})^\top)M_y V^\mathfrak{n},
  \qquad K^\mathfrak{n} = U^\mathfrak{n} S^\mathfrak{n}.
  \label{eq:k_eqn}
\end{equation}
\STATE Compute $[U^{\mathfrak{n}+1},\, R_K] = \mathrm{GQR}\!\left(K^{\mathfrak{n}+1},\, M_x\right)$.
\STATE Compute the $r \times r$ matrix $\mathcal{M} = (U^{\mathfrak{n}+1})^\top M_x U^\mathfrak{n}$.
\STATE \textit{L-step:} Fix $U^\mathfrak{n}$, and solve $L^{\mathfrak{n}+1}$ from the $n \times r$ matrix equation
\begin{equation}
  D_t L^{\mathfrak{n}+1}
  = \mathscr{L}(U^\mathfrak{n} (L^{\mathfrak{n}+1})^\top)^\top M_x U^\mathfrak{n},
  \qquad L^\mathfrak{n} = V^\mathfrak{n} (S^\mathfrak{n})^\top.
  \label{eq:l_eqn}
\end{equation}
\STATE Compute $[V^{\mathfrak{n}+1},\, R_L] = \mathrm{GQR}\!\left(L^{\mathfrak{n}+1},\, M_y\right)$.
\STATE Compute the $r \times r$ matrix $\mathcal{N} = (V^{\mathfrak{n}+1})^\top M_y V^\mathfrak{n}$.
\algcomment{\textbf{Step 2}: Update $S^\mathfrak{n} \to S^{\mathfrak{n}+1} \in \mathbb{R}^{{r}\times{r}}$:}
\STATE \textit{S-step:} Set $S^{\mathfrak{n},*} = \mathcal{M}\, S^\mathfrak{n}\, \mathcal{N}^\top \in \mathbb{R}^{{r}\times{r}}$.
\STATE Solve $S^{\mathfrak{n}+1}$ from the $r \times r$ matrix equation
\begin{equation}\label{eq:s_eqn}
  \frac{S^{\mathfrak{n}+1} - S^{\mathfrak{n},*}}{\Delta t}
  = (U^{\mathfrak{n}+1})^\top M_x\, \mathscr{L}\!\left(U^{\mathfrak{n}+1} S^{\mathfrak{n}+1} (V^{\mathfrak{n}+1})^\top\right) M_y V^{\mathfrak{n}+1}.
\end{equation}
\end{algorithmic}
\end{algorithm}

As an implicit algorithm, \Cref{alg:KLS-Steps} raises two fundamental
questions. The first is the well-posedness of the scheme, namely, whether
a unique solution exists at each time step. The second concerns
developing an efficient and low-cost solver for the resulting linear systems.

%=================================================================================
\subsection{Well-Posedness} 
%=================================================================================

To address the first question for \Cref{alg:KLS-Steps}, we introduce several subspaces of the finite element space \eqref{FEMspace}.
Given the low-rank state at time $t^\mathfrak{n}$ with the factorization
\(\mathcal W^\mathfrak{n}
= U^\mathfrak{n} S^\mathfrak{n} (V^\mathfrak{n})^\top\),
we define the following subspaces of $V_h$. These subspaces depend on the current low-rank factors and are dynamically updated at each time step:
\begin{subequations}
\begin{align}
V_0^{\mathfrak{n}}
  &= \left\{
  v \,\middle|\,
  v(x,y)
  = \Phi(x)^\top U^{\mathfrak{n}} S
  (V^{\mathfrak{n}})^\top \Psi(y),
  \quad \forall S \in \mathbb{R}^{r\times r}
  \right\},\\
  V_1^\mathfrak{n}
  &= \left\{
  v \,\middle|\,
  v(x,y)
  = \Phi(x)^\top K (V^\mathfrak{n})^\top \Psi(y),
  \quad \forall K \in \mathbb{R}^{m\times r}
  \right\},\\
  V_2^\mathfrak{n}
  &= \left\{
  v \,\middle|\,
  v(x,y)
  = \Phi(x)^\top U^\mathfrak{n} L^\top \Psi(y),
  \quad \forall L \in \mathbb{R}^{n\times r}
  \right\}.
\end{align}
\end{subequations}

It is readily observed that
\[
V_i^\mathfrak{n}\subset V_h,\qquad i=0,1,2,
\]
and
\[
\hat w_h^\mathfrak{n}
= \Phi(x)^\top U^\mathfrak{n} S^\mathfrak{n}
(V^\mathfrak{n})^\top \Psi(y)
\in V_0^\mathfrak{n}\cap V_1^\mathfrak{n}\cap V_2^\mathfrak{n}.
\]
In contrast, in general,
\[
\hat w_h^\mathfrak{n}\notin V_0^{\mathfrak{n}+1},
\]
since the latter subspace is determined by the updated factors
$U^{\mathfrak{n}+1}$ and $V^{\mathfrak{n}+1}$. Nevertheless, it holds that
\begin{equation}
    w_S^{\mathfrak{n},*}
    := \Phi(x)^\top U^{\mathfrak{n}+1} S^{\mathfrak{n},*}
    (V^{\mathfrak{n}+1})^\top \Psi(y)
    \in V_0^{\mathfrak{n}+1},
\end{equation}
where $S^{\mathfrak{n},*}$ is introduced in \Cref{alg:KLS-Steps}. Moreover, $w_S^{\mathfrak{n},*}$ is the mass-lumped $L^2$ projection of $\hat w_h^\mathfrak{n}$ onto $V_0^{\mathfrak{n}+1}$, characterized by
\begin{equation}\label{l2proj}
    (w_S^{\mathfrak{n},*},v)_h
    =(\hat w_h^\mathfrak{n},v)_h,
    \qquad \forall v\in V_0^{\mathfrak{n}+1}.
\end{equation}

The main advantage of WDLRA over DLRA is that each equation arising in the BUG integrator can be interpreted as an implicit MLFEM problem posed in a suitable subspace of the finite element space $V_h$.
The following result establishes the equivalence of \Cref{alg:KLS-Steps} and the first-order MLFEM formulation on the subspaces defined above.

\begin{lemma}\label{lem:mateqfem}
The matrices $K^{\mathfrak{n}+1}$, $L^{\mathfrak{n}+1}$, and
$S^{\mathfrak{n}+1}$ solve \eqref{eq:k_eqn}, \eqref{eq:l_eqn}, and
\eqref{eq:s_eqn} in \Cref{alg:KLS-Steps}, respectively, if and only if the 
functions
\begin{align*}
w_K^{\mathfrak{n}+1}
&:= \Phi(x)^\top K^{\mathfrak{n}+1}
(V^\mathfrak{n})^\top \Psi(y) \in V_1^\mathfrak{n},\\
w_L^{\mathfrak{n}+1}
&:= \Phi(x)^\top U^\mathfrak{n}
(L^{\mathfrak{n}+1})^\top \Psi(y) \in V_2^\mathfrak{n},\\
w_S^{\mathfrak{n}+1}
&:= \Phi(x)^\top U^{\mathfrak{n}+1} S^{\mathfrak{n}+1}
(V^{\mathfrak{n}+1})^\top \Psi(y) \in V_0^{\mathfrak{n}+1}
\end{align*}
solve the following first-order implicit BUG-MLFEM scheme:
\begin{align}
  (D_t w_K^{\mathfrak{n}+1}, w_1)_h
  + (\nabla w_K^{\mathfrak{n}+1}, \nabla w_1)_h
  - \alpha (w_K^{\mathfrak{n}+1}, w_1)_h &= 0,
  &&\forall w_1 \in V_1^{\mathfrak{n}}, \label{eq:Kvar32}\\
  (D_t w_L^{\mathfrak{n}+1}, w_2)_h
  + (\nabla w_L^{\mathfrak{n}+1}, \nabla w_2)_h
  - \alpha (w_L^{\mathfrak{n}+1}, w_2)_h &= 0,
  &&\forall w_2 \in V_2^{\mathfrak{n}}, \label{eq:Lvar32}\\
  (D_t w_S^{\mathfrak{n}+1}, w_0)_h
  + (\nabla w_S^{\mathfrak{n}+1}, \nabla w_0)_h
  - \alpha (w_S^{\mathfrak{n}+1}, w_0)_h &= 0,
  &&\forall w_0 \in V_0^{\mathfrak{n}+1}, \label{eq:Svar32}
\end{align}
where $w_K^\mathfrak{n} = w_L^\mathfrak{n} = w_S^\mathfrak{n} = \hat w_h^\mathfrak{n}$.
\end{lemma}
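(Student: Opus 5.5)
The plan is to translate each variational equation into matrix form using \Cref{lem:inner-products}, and then strip off the test function by exploiting the structure of the test spaces together with the weighted orthonormality of the bases.

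For the $K$-step, take an arbitrary test function $w_1=\Phi^\top \widetilde K (V^\mathfrak{n})^\top\Psi\in V_1^\mathfrak{n}$ with $\widetilde K\in\mathbb{R}^{m\times r}$. Since $w_K^{\mathfrak{n}}=\hat w_h^\mathfrak{n}$ has coefficient matrix $U^\mathfrak{n}S^\mathfrak{n}(V^\mathfrak{n})^\top=K^\mathfrak{n}(V^\mathfrak{n})^\top$, the coefficient matrix of $D_tw_K^{\mathfrak{n}+1}$ is $D_tK^{\mathfrak{n}+1}(V^\mathfrak{n})^\top$. By \eqref{eq:ip-mass}–\eqref{eq:ip-stiff}, equation \eqref{eq:Kvar32} becomes
\[
\bigl(D_tK^{\mathfrak{n}+1}(V^\mathfrak{n})^\top-\mathscr{L}(K^{\mathfrak{n}+1}(V^\mathfrak{n})^\top),\ \widetilde K(V^\mathfrak{n})^\top\bigr)_M=0 .
\]
Writing $(X,\widetilde K V^\top)_M=\mathrm{tr}\bigl((M_xXM_yV)^\top\widetilde K\bigr)$, we see that requiring this for all $\widetilde K$ is equivalent to $M_x\bigl[D_tK^{\mathfrak{n}+1}(V^\mathfrak{n})^\top-\mathscr{L}(\cdot)\bigr]M_yV^\mathfrak{n}=0$. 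Cancel the invertible diagonal matrix $M_x$ and use $(V^\mathfrak{n})^\top M_yV^\mathfrak{n}=I_r$. This gives exactly \eqref{eq:k_eqn}, and each step is reversible, so we obtain the stated equivalence.

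The $L$-step is symmetric. We test with $w_2=\Phi^\top U^\mathfrak{n}\widetilde L^\top\Psi$, transpose inside the trace, cancel $M_y$, and use $(U^\mathfrak{n})^\top M_xU^\mathfrak{n}=I_r$ to recover \eqref{eq:l_eqn}. We also need $L^\mathfrak{n}=V^\mathfrak{n}(S^\mathfrak{n})^\top$, which holds because $U^\mathfrak{n}(L^\mathfrak{n})^\top$ equals the coefficient matrix of $\hat w_h^\mathfrak{n}$.

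The $S$-step is the subtle one, because $w_S^\mathfrak{n}=\hat w_h^\mathfrak{n}\notin V_0^{\mathfrak{n}+1}$ in general. With $w_0=\Phi^\top U^{\mathfrak{n}+1}\widetilde S(V^{\mathfrak{n}+1})^\top\Psi$, the time-derivative term splits as $\frac1{\Delta t}\bigl[(w_S^{\mathfrak{n}+1},w_0)_h-(\hat w_h^\mathfrak{n},w_0)_h\bigr]$. By the projection property \eqref{l2proj} we may replace $\hat w_h^\mathfrak{n}$ by $w_S^{\mathfrak{n},*}$. To justify \eqref{l2proj}, compute $(\hat w_h^\mathfrak{n},w_0)_h=\mathrm{tr}\bigl(\widetilde S^\top (U^{\mathfrak{n}+1})^\top M_xU^\mathfrak{n}S^\mathfrak{n}(V^\mathfrak{n})^\top M_yV^{\mathfrak{n}+1}\bigr)=\mathrm{tr}\bigl(\widetilde S^\top\mathcal{M}S^\mathfrak{n}\mathcal{N}^\top\bigr)$. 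Then, by the same trace argument, cancelling $(U^{\mathfrak{n}+1})^\top M_xU^{\mathfrak{n}+1}=I_r$ and $(V^{\mathfrak{n}+1})^\top M_yV^{\mathfrak{n}+1}=I_r$, testing against all $\widetilde S$ yields
\[
\frac{S^{\mathfrak{n}+1}-S^{\mathfrak{n},*}}{\Delta t}=(U^{\mathfrak{n}+1})^\top M_x\mathscr{L}(U^{\mathfrak{n}+1}S^{\mathfrak{n}+1}(V^{\mathfrak{n}+1})^\top)M_yV^{\mathfrak{n}+1},
\]
which is exactly \eqref{eq:s_eqn}.

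The main obstacle is this handling of the old state in the $S$-step, namely correctly identifying $S^{\mathfrak{n},*}$ via the mass-lumped projection. The $K$- and $L$-steps are routine once the trace identity and the weighted orthonormality are in place.
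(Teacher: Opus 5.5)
Your proposal is correct and follows the paper's route: the paper obtains the lemma from the matrix representation of the inner products in \Cref{lem:inner-products}, together with the projection identity \eqref{l2proj} for the old state in the $S$-step. Your explicit trace computation showing that $S^{\mathfrak{n},*}=\mathcal{M}S^{\mathfrak{n}}\mathcal{N}^\top$ realizes \eqref{l2proj} fills in a detail that the paper only asserts.
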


The proof of \Cref{lem:mateqfem} follows from the matrix representations of the inner products established in \Cref{lem:inner-products}, together with \eqref{l2proj}.

\begin{definition}\label{def:lows}
At time $t^{\mathfrak{n}+1}$, we define $V_0^{\mathfrak{n}+1}$ as the BUG-MLFEM space and $\hat w_h^{\mathfrak{n}+1}:=w_S^{\mathfrak{n}+1} \in V_0^{\mathfrak{n}+1}$ as the BUG-MLFEM solution.
\end{definition}

\begin{theorem}[Well-posedness]
\label{thm:wp32}
Given the BUG-MLFEM solution $\hat w_h^{\mathfrak{n}}$, with
\(
w_K^{\mathfrak{n}}
=
w_L^{\mathfrak{n}}
=
w_S^{\mathfrak{n}}
=
\hat w_h^{\mathfrak{n}}.
\)
Then, (i) for any $\Delta t>0$, the first-order implicit BUG-MLFEM scheme
\eqref{eq:Kvar32}--\eqref{eq:Svar32} admits a unique solution
\begin{equation}\label{wkls}
    (w_K^{\mathfrak{n}+1},\,w_L^{\mathfrak{n}+1},\,w_S^{\mathfrak{n}+1})
\in
V_1^{\mathfrak{n}}
\times
V_2^{\mathfrak{n}}
\times
V_0^{\mathfrak{n}+1}.
\end{equation}
Equivalently, the matrix system in \Cref{alg:KLS-Steps} admits a unique
solution
\begin{equation}
(K^{\mathfrak{n}+1},\,L^{\mathfrak{n}+1},\,S^{\mathfrak{n}+1}) \in \mathbb{R}^{m\times r}\times \mathbb{R}^{n\times r}\times \mathbb{R}^{r\times r}.
\end{equation}
(ii) The BUG-MLFEM solution satisfies the mass-lumped
$L^2$-stability
\begin{equation}\label{l2stab}
\|\hat w_h^{\mathfrak{n}+1}\|_h
\leq
\|\hat w_h^{\mathfrak{n}}\|_h. 
\end{equation}
\end{theorem}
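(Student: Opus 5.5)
By \Cref{lem:mateqfem}, it suffices to work with the variational problems \eqref{eq:Kvar32}--\eqref{eq:Svar32} posed on the finite-dimensional subspaces $V_1^{\mathfrak n}$, $V_2^{\mathfrak n}$ and $V_0^{\mathfrak n+1}$. Each of them has the same structure. For a generic subspace $W\subset V_h$ and a given datum $g\in V_h$, we seek $u\in W$ with
\[
a(u,v):=(u,v)_h+\Delta t(\nabla u,\nabla v)_h-\Delta t\,\alpha(u,v)_h=(g,v)_h\quad \forall v\in W.
\]
The bilinear form $a$ is symmetric. Since $\alpha\le 0$ and $(\nabla v,\nabla v)_h\ge 0$, it satisfies $a(v,v)\ge \|v\|_h^2$, and $\|\cdot\|_h$ is a norm on $V_h$ because the mass-lumped inner product is positive definite by \eqref{eq:ip-mass}. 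The positivity $(\nabla v,\nabla v)_h\ge 0$ holds because this quantity is a weighted sum of squares of $\nabla v$ at the quadrature nodes. Hence $a$ is coercive on the finite-dimensional space $W$. If $a(u,v)=0$ for all $v\in W$, then testing with $v=u$ gives $u=0$. Existence and uniqueness therefore follow, for instance from Lax--Milgram or simply from injectivity of a square linear system.

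For part (i), I apply this in order. First I take $W=V_1^{\mathfrak n}$ and $g=\hat w_h^{\mathfrak n}\in V_1^{\mathfrak n}$ to obtain $w_K^{\mathfrak n+1}$. Next I take $W=V_2^{\mathfrak n}$ to obtain $w_L^{\mathfrak n+1}$. These two determine $U^{\mathfrak n+1}$ and $V^{\mathfrak n+1}$ through GQR, and hence the space $V_0^{\mathfrak n+1}$.

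One point needs care before solving the $S$-problem. The $S$-step uses $S^{\mathfrak n,*}$, that is, $w_S^{\mathfrak n,*}$, in the time difference, rather than $\hat w_h^{\mathfrak n}$. Because \eqref{l2proj} holds and the test functions lie in $V_0^{\mathfrak n+1}$, replacing one by the other in $(D_t w_S^{\mathfrak n+1},w_0)_h$ does not change the equation. With this, $w_S^{\mathfrak n+1}$ is obtained by applying the lemma with $W=V_0^{\mathfrak n+1}$.

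The GQR step requires $K^{\mathfrak n+1}$ to have full column rank, so that $U^{\mathfrak n+1}$ has $r$ columns and $V_0^{\mathfrak n+1}$ has the intended dimension. I will handle this by reading $V_0^{\mathfrak n+1}$ as the span defined by the computed factors. In any case, the argument above requires only that $W$ be a linear subspace of $V_h$. The matrix statement then follows because the coefficient-to-function maps $K\mapsto \Phi^\top K (V^{\mathfrak n})^\top\Psi$, $L\mapsto \Phi^\top U^{\mathfrak n} L^\top \Psi$ and $S\mapsto \Phi^\top U^{\mathfrak n+1} S (V^{\mathfrak n+1})^\top\Psi$ are injective. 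Injectivity holds because $V^{\mathfrak n}$, $U^{\mathfrak n}$, $U^{\mathfrak n+1}$ and $V^{\mathfrak n+1}$ have weighted-orthonormal, hence independent, columns, and the nodal bases $\Phi$, $\Psi$ are independent.

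For part (ii), I test \eqref{eq:Svar32}, rewritten with $w_S^{\mathfrak n,*}$, with $w_0=w_S^{\mathfrak n+1}$. Dropping the nonnegative terms $\Delta t\|\nabla w_S^{\mathfrak n+1}\|_h^2$ and $-\Delta t\,\alpha\|w_S^{\mathfrak n+1}\|_h^2$ gives
\[
\|w_S^{\mathfrak n+1}\|_h^2\le (w_S^{\mathfrak n,*},w_S^{\mathfrak n+1})_h\le \|w_S^{\mathfrak n,*}\|_h\|w_S^{\mathfrak n+1}\|_h ,
\]
and hence $\|\hat w_h^{\mathfrak n+1}\|_h\le\|w_S^{\mathfrak n,*}\|_h$. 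Finally, $w_S^{\mathfrak n,*}$ is the $(\cdot,\cdot)_h$-orthogonal projection of $\hat w_h^{\mathfrak n}$ by \eqref{l2proj}, so $\|w_S^{\mathfrak n,*}\|_h\le\|\hat w_h^{\mathfrak n}\|_h$, which gives \eqref{l2stab}.

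The main obstacle is the bookkeeping in the $S$-step just described: justifying that $S^{\mathfrak n,*}$ corresponds exactly to the projection and that $V_0^{\mathfrak n+1}$ is well defined. Everything else is a standard coercivity argument.
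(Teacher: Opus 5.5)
Your proof is correct. Part (i) is essentially the paper's argument. The paper takes two solutions of the $S$-problem, tests their difference with itself to get $\bigl(\frac{1}{\Delta t}-\alpha\bigr)\|\delta w_S^{\mathfrak n+1}\|_h^2+(\text{gradient term})=0$, and uses that uniqueness implies existence for a square linear system; your coercivity bound $a(v,v)\ge\|v\|_h^2$ on a generic subspace is the same computation. You also make explicit two points the paper leaves implicit. The first is the injectivity of the coefficient-to-function maps, which is what actually transfers uniqueness from \eqref{wkls} to $(K^{\mathfrak n+1},L^{\mathfrak n+1},S^{\mathfrak n+1})$. The second is that only nonnegativity of the lumped term $(\nabla v,\nabla v)_h$ is needed; the paper writes this term as $\|\nabla v\|_{L^2(\Omega)}^2$.

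Part (ii) takes a slightly different route. The paper never passes to $w_S^{\mathfrak n,*}$. It tests \eqref{eq:Svar32} with $w_0=w_S^{\mathfrak n+1}$ and applies the polarization identity with $b=\hat w_h^{\mathfrak n}$, which is legitimate because the identity holds for any pair in $V_h$. This gives an energy identity that displays the dissipated quantities $\|w_S^{\mathfrak n+1}-\hat w_h^{\mathfrak n}\|_h^2$ and $2\Delta t(\nabla w_S^{\mathfrak n+1},\nabla w_S^{\mathfrak n+1})_h$, which is the form one would reuse in an error analysis. Your Cauchy--Schwarz step, combined with the contraction property of the $(\cdot,\cdot)_h$-orthogonal projection, instead gives the finer chain $\|\hat w_h^{\mathfrak n+1}\|_h\le\|w_S^{\mathfrak n,*}\|_h\le\|\hat w_h^{\mathfrak n}\|_h$. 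That chain separates the loss from the basis change from the dissipation of the implicit step. If you only want \eqref{l2stab}, the detour is optional: Cauchy--Schwarz applied directly to $(\hat w_h^{\mathfrak n},w_S^{\mathfrak n+1})_h$ already suffices.
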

\begin{proof}
(i) We present the proof for the $S$-step in detail; the $K$- and $L$-steps follow by the same reasoning. Since \eqref{eq:Svar32} is a finite-dimensional linear system, existence is equivalent to uniqueness. Thus, it suffices to prove uniqueness.

Suppose that $w_{S,1}^{\mathfrak{n}+1}$ and $w_{S,2}^{\mathfrak{n}+1}$ are two solutions of \eqref{eq:Svar32} corresponding to the same data $\hat w_h^\mathfrak{n}$. Define
\(
\delta w_S^{\mathfrak{n}+1}
:=w_{S,1}^{\mathfrak{n}+1}-w_{S,2}^{\mathfrak{n}+1}
\in V_0^{\mathfrak{n}+1}.
\)
Subtracting the two equations gives
\[
\bigl(D_t\delta w_S^{\mathfrak{n}+1},w_0\bigr)_h
+A(\delta w_S^{\mathfrak{n}+1},w_0)
=\alpha\bigl(\delta w_S^{\mathfrak{n}+1},w_0\bigr)_h,
\quad
\forall\,w_0\in V_0^{\mathfrak{n}+1}.
\]
Since both solutions share the same previous-time data, 
\(
\delta w_S^{\mathfrak{n}}=0,
\)
and hence
\[
D_t\delta w_S^{\mathfrak{n}+1}
=\frac{\delta w_S^{\mathfrak{n}+1}}{\Delta t}.
\]
Taking $w_0=\delta w_S^{\mathfrak{n}+1}$ yields
\[
\frac{1}{\Delta t}
\|\delta w_S^{\mathfrak{n}+1}\|_h^2
+\|\nabla\delta w_S^{\mathfrak{n}+1}\|_{L^2(\Omega)}^2
=
\alpha\|\delta w_S^{\mathfrak{n}+1}\|_h^2,
\]
where we have used
\(
A(v,v)=\|\nabla v\|_{L^2(\Omega)}^2.
\)
Therefore,
\[
\left(\frac{1}{\Delta t}-\alpha\right)
\|\delta w_S^{\mathfrak{n}+1}\|_h^2
+\|\nabla\delta w_S^{\mathfrak{n}+1}\|_{L^2(\Omega)}^2
=0.
\]
Since $\Delta t>0$ and $\alpha\le0$, we have
\(
\frac{1}{\Delta t}-\alpha>0.
\)
Both terms in the above identity are therefore nonnegative, which implies
\(
\|\delta w_S^{\mathfrak{n}+1}\|_h=0.
\)
Hence,
\(
\delta w_S^{\mathfrak{n}+1}=0,
\)
and consequently
\[
w_{S,1}^{\mathfrak{n}+1}=w_{S,2}^{\mathfrak{n}+1}.
\]
Thus, the $S$-step solution is unique.

The existence and uniqueness of the solution
$(K^{\mathfrak{n}+1},L^{\mathfrak{n}+1},S^{\mathfrak{n}+1})$
in \Cref{alg:KLS-Steps} then follow from those of \eqref{wkls} and \Cref{lem:mateqfem}.

(ii) Choose $w_0 = w_S^{\mathfrak{n}+1}$ in \eqref{eq:Svar32} to obtain
\[
  \bigl(D_t w_S^{\mathfrak{n}+1}, w_S^{\mathfrak{n}+1}\bigr)_h
  + \|\nabla w_S^{\mathfrak{n}+1}\|_{L^2(\Omega)}^2
  = \alpha\|w_S^{\mathfrak{n}+1}\|_h^2.
\]
Applying the polarization identity
\(
  (a-b,a)
  = \frac{1}{2}
  \left(
    \|a\|^2-\|b\|^2+\|a-b\|^2
  \right)
\)
with $a=w_S^{\mathfrak{n}+1}$ and $b=w_S^\mathfrak{n}$, and multiplying by $2\Delta t$, gives
\[
  \|w_S^{\mathfrak{n}+1}\|_h^2
  -\|w_S^\mathfrak{n}\|_h^2
  +\|w_S^{\mathfrak{n}+1}-w_S^\mathfrak{n}\|_h^2
  +2\Delta t
  \|\nabla w_S^{\mathfrak{n}+1}\|_{L^2(\Omega)}^2
  =
  2\Delta t\,\alpha
  \|w_S^{\mathfrak{n}+1}\|_h^2.
\]
Since $\Delta t>0$ and $\alpha\leq0$, the right-hand side is nonpositive, while the third and fourth terms on the left-hand side are nonnegative. Hence,
\[
  \|w_S^{\mathfrak{n}+1}\|_h^2
  \leq
  \|w_S^\mathfrak{n}\|_h^2.
\]
By \Cref{def:lows}, this is equivalent to \eqref{l2stab}.
\end{proof}

\section{Solvers for the implicit BUG integator}\label{sec:solvers}

To address the second question in \Cref{sec:bug} concerning \Cref{alg:KLS-Steps}, we develop low-cost non-iterative Sylvester solvers for the linear systems arising in the $K$- and $L$-steps.

\subsection{Structure of KLS linear systems}
\label{sec:linear_systems}

We begin by examining the structure of the linear systems involved.
%The structure will be exploited to develop efficient solvers for the matrix equations associated with the $K$- and $L$-steps.
Substituting the operator $\mathscr{L}$ from
\eqref{eq:LxLy} into \eqref{eq:k_eqn} gives
\begin{equation*}
\frac{K^{\mathfrak{n}+1}-K^\mathfrak{n}}{\Delta t}
= L_xK^{\mathfrak{n}+1}
- K^{\mathfrak{n}+1}(V^\mathfrak{n})^\top A_yV^\mathfrak{n}
+ \alpha K^{\mathfrak{n}+1}.
\end{equation*}
Rearranging the terms yields the $K$-equation
\begin{equation}\label{Ksylvester}
(-\Delta t\, L_x)K^{\mathfrak{n}+1}
+K^{\mathfrak{n}+1}
\Bigl((1-\Delta t\,\alpha)I
+\Delta t\,(V^\mathfrak{n})^\top A_yV^\mathfrak{n}\Bigr)
=K^\mathfrak{n}.
\end{equation}
Similarly, substituting \eqref{eq:LxLy} into \eqref{eq:l_eqn} gives the $L$-equation
\begin{equation}\label{Lsylvester}
(-\Delta t\, L_y)L^{\mathfrak{n}+1}
+L^{\mathfrak{n}+1}
\Bigl((1-\Delta t\,\alpha)I
+\Delta t\,(U^\mathfrak{n})^\top A_xU^\mathfrak{n}\Bigr)
=L^\mathfrak{n}.
\end{equation}
Finally, \eqref{eq:s_eqn} gives the $S$-equation
\begin{equation}\label{Ssylvester}
\resizebox{\linewidth}{!}{$\displaystyle
\Bigl((1-\Delta t\,\alpha) I
+\Delta t\,(U^{\mathfrak{n}+1})^\top A_xU^{\mathfrak{n}+1}\Bigr)
S^{\mathfrak{n}+1}
+
S^{\mathfrak{n}+1}
\Bigl(\Delta t\,(V^{\mathfrak{n}+1})^\top A_yV^{\mathfrak{n}+1}\Bigr)
=
S^{\mathfrak{n},*}.$
%\mathcal{M}S^{\mathfrak{n},*}\mathcal{N}^\top.$
}
\end{equation}

Thus, all three linear matrix equations
\eqref{Ksylvester}--\eqref{Ssylvester}
can be expressed in the unified Sylvester form
\begin{equation}\label{eq:nd_sylvester}
AX+XB=C.
\end{equation}
More precisely, the corresponding coefficient matrices, unknowns, and
right-hand sides are given as follows.

\begin{itemize}
    \item For the $K$-equation \eqref{Ksylvester},
    \[
    A=-\Delta t\,L_x\in\mathbb{R}^{m\times m},\qquad
    B=(1-\Delta t\,\alpha)I_r
    +\Delta t\,(V^\mathfrak{n})^\top A_yV^\mathfrak{n}
    \in\mathbb{R}^{r\times r},
    \]
    \[
    X=K^{\mathfrak{n}+1}\in\mathbb{R}^{m\times r},\qquad
    C=K^\mathfrak{n}\in\mathbb{R}^{m\times r}.
    \]

    \item For the $L$-equation \eqref{Lsylvester},
    \[
    A=-\Delta t\,L_y\in\mathbb{R}^{n\times n},\qquad
    B=(1-\Delta t\,\alpha)I_r
    +\Delta t\,(U^\mathfrak{n})^\top A_xU^\mathfrak{n}
    \in\mathbb{R}^{r\times r},
    \]
    \[
    X=L^{\mathfrak{n}+1}\in\mathbb{R}^{n\times r},\qquad
    C=L^\mathfrak{n}\in\mathbb{R}^{n\times r}.
    \]

    \item For the $S$-equation \eqref{Ssylvester},
    \[
    A=(1-\Delta t\,\alpha)I
    +\Delta t\,(U^{\mathfrak{n}+1})^\top A_xU^{\mathfrak{n}+1}
    \in\mathbb{R}^{r\times r},
    \]
    \[
    B=\Delta t\,(V^{\mathfrak{n}+1})^\top A_yV^{\mathfrak{n}+1}
    \in\mathbb{R}^{r\times r},
    \]
    \[
    X=S^{\mathfrak{n}+1}\in\mathbb{R}^{r\times r},\qquad
    C=S^{\mathfrak{n},*}
    %C=\mathcal{M}S^{\mathfrak{n},*}\mathcal{N}^\top
    \in\mathbb{R}^{r\times r}.
    \]
\end{itemize}

\subsection{Conventional Sylvester solver}
A direct application of a conventional dense Sylvester equation solver,
such as the Bartels--Stewart algorithm \cite{bartels1972algorithm}, would require Schur
decompositions of the coefficient matrices. For the $K$- and
$L$-equations, the resulting computational costs are
$O(m^3)$ and $O(n^3)$, respectively, while the $S$-equation incurs
$O(r^3)$ operations. Consequently, solving all three equations by a
standard dense Sylvester solver would require
\(
O(m^3+n^3+r^3)
\)
operations. Since the rank $r$ is typically much smaller than the
spatial dimensions $m$ and $n$, the dominant cost is
$O(m^3+n^3)$. This is comparable to the computational complexity of
the corresponding full-rank discretization and therefore does not
fully exploit the low-rank structure.

For the $S$-equation, all coefficient matrices are of size
$r\times r$, so a conventional Sylvester solver has a cost of only
$O(r^3)$ and is inexpensive when $r\ll m,n$. In contrast, the
Sylvester equations \eqref{eq:nd_sylvester} arising from the $K$- and
$L$-steps possess additional structure that can be exploited to
further reduce the computational cost. In particular, the spatial
coefficient matrices in the $K$- and $L$-equations are
$A=-\Delta t\,L_x$ and $A=-\Delta t\,L_y$, respectively, and thus
inherit the sparsity of the underlying one-dimensional spatial
discretization operators. More specifically, since $M_x$ and $M_y$
are diagonal matrices, the matrices $L_x$ and $L_y$ defined in
\eqref{eq:1d-diffusion-operators} have the same sparsity patterns as
$A_x$ and $A_y$, respectively. Meanwhile, the corresponding
coefficient matrices $B$ are only of size $r\times r$ and are
generally symmetric. Hence, the $K$- and $L$-equations exhibit a
sparse-by-small structure that is not exploited by conventional
dense Sylvester solvers, motivating the development of specialized
solvers that take advantage of this structure.

The storage cost follows the same pattern. The Bartels--Stewart algorithm \cite{bartels1972algorithm} requires the real Schur factors of $A$, which are generally dense $m\times m$ matrices for the $K$-equation (resp.\ $n\times n$ matrices for the $L$-equation). Thus, although $A$ itself is banded, its Schur factors generally do not preserve this sparsity, resulting in $O(m^2)$ and $O(n^2)$ storage, respectively. For the $S$-equation, all coefficient matrices are of size $r\times r$, requiring only $O(r^2)$ storage. Hence, the total storage cost is $O(m^2+n^2+r^2)=O(m^2+n^2)$, where the latter follows from $r\ll m,n$. In particular, the dense Schur factors lead to a storage requirement comparable to that of the corresponding full-rank discretization.

\subsection{Low-cost non-iterative Sylvester solvers}
Both $K$-equation \eqref{Ksylvester} and $L$-equation \eqref{Lsylvester} have the
unified Sylvester form \eqref{eq:nd_sylvester}, with the spatial
coefficient matrix $A$ sharing the same sparsity pattern as $A_x$ and
$A_y$ in the $K$- and $L$-equations, respectively, while the
corresponding low-dimensional coefficient matrix $B$ is symmetric.
We therefore develop a solver for \eqref{eq:nd_sylvester} that
exploits these sparsity and symmetry properties. We focus on the
$K$-equation \eqref{Ksylvester}, with the application for the $L$-equation
following analogously.

\subsubsection{Banded stiffness matrix: Dirichlet and Neumann BCs}

Under the Neumann and Dirichlet BCs, both $A_x$ and $A_y$ in \eqref{AxAy}
are symmetric banded matrices with bandwidth $2k+1$ and half-bandwidth $k$; namely,
\[
(A_x)_{ij}=(A_y)_{ij}=0
\qquad \text{for all } |i-j|>k.
\]
In the Sylvester equation \eqref{eq:nd_sylvester}, the matrix $A = -\Delta t L_x =  \Delta t M_x^{-1} A_x\in\mathbb{R}^{m\times m}$ or $A = -\Delta t L_y =  \Delta t M_y^{-1} A_y\in\mathbb{R}^{n\times n}$ inherits the same bandwidth $2k+1$ and half-bandwidth $k$. However, since
$M_x^{-1}$ and $M_y^{-1}$ are generally not scalar multiples of the
identity, $A$ is generally nonsymmetric, despite the symmetry of $A_x$ and
$A_y$. In contrast, the matrix $B\in\mathbb{R}^{r\times r}$ is symmetric.

The key idea is therefore to diagonalize the small matrix $B$ rather
than the large matrix $A$. Eigendecomposing $B\in\mathbb{R}^{r\times r}$
costs $O(r^3)$, whereas diagonalizing
$A\in\mathbb{R}^{m\times m}$ would cost $O(m^3)$ and, more
importantly, destroy its banded structure, since the eigenvectors of a
banded matrix are generally dense. Because $r\ll m$, diagonalizing $B$
is therefore the natural choice: it preserves the sparsity of the
large matrix $A$ while restricting the eigendecomposition to the
small matrix $B$. 

The steps for solving the unified Sylvester equation \eqref{eq:nd_sylvester} are as follows.

\subsection*{Step 1: Eigendecomposition of $B$}

Since $B$ is symmetric, it admits a real orthogonal eigendecomposition
\begin{equation}
    B = Q\Lambda Q^\top,
    \qquad
    \Lambda = \operatorname{diag}(\lambda_1,\dots,\lambda_r),
    \qquad
    Q^\top Q = I_r,
    \qquad
    BQ = Q\Lambda.
    \label{eq:eig-B}
\end{equation}
Here, the columns $q_1,\dots,q_r$ of $Q$ form an orthonormal basis
of eigenvectors of $B$, satisfying
$Bq_j=\lambda_jq_j$, where the eigenvalues
$\lambda_1,\dots,\lambda_r$ are real.

\subsection*{Step 2: Diagonalization of the Sylvester equation}

Right-multiplying \eqref{eq:nd_sylvester} by $Q$ and using
$BQ=Q\Lambda$ gives
\begin{equation}
    AXQ + XBQ = CQ
    \;\Longrightarrow\;
    A(XQ) + (XQ)\Lambda = CQ.
    \label{eq:right-mult}
\end{equation}
Introducing
\[
    Y := XQ,
    \qquad
    \widetilde{C} := CQ,
\]
the unified Sylvester equation \eqref{eq:nd_sylvester} can be equivalently written as
%we obtain the equivalent equation
\begin{equation}
    AY + Y\Lambda = \widetilde{C},
    \label{eq:decoupled-matrix}
\end{equation}
in which the dense symmetric matrix $B$ is replaced by the
diagonal matrix $\Lambda$.

\subsection*{Step 3: Column-wise solution of the new Sylvester equation}

Let $y_j,\widetilde{c}_j\in\mathbb{R}^m$ denote the $j$-th columns
of $Y$ and $\widetilde{C}$, respectively. Since $\Lambda$ is diagonal,
\eqref{eq:decoupled-matrix} decouples into $r$ independent shifted
linear systems:
\begin{equation}
    (A+\lambda_j I)y_j=\widetilde{c}_j,
    \qquad
    j=1,\dots,r.
    \label{eq:shifted-system}
\end{equation}
Each matrix $A+\lambda_j I$ differs from $A$ only in its diagonal
entries and therefore retains the half-bandwidth $k$. 
The well-posedness of \eqref{eq:shifted-system} is guaranteed by \Cref{thm:wp32}.
Consequently, each system in
\eqref{eq:shifted-system} can be solved independently by banded
Gaussian elimination.

\subsection*{Step 4: Back transformation}

After solving \eqref{eq:shifted-system}, assemble
\[
    Y=[y_1,\dots,y_r]\in\mathbb{R}^{m\times r}.
\]
Since $Y=XQ$ and $Q$ is orthogonal, the solution is recovered by
\begin{equation}
    X=YQ^\top.
    \label{eq:back-transform}
\end{equation}

\Cref{alg:sylvester-sym} summarizes the above procedure.
It replaces a dense Sylvester solve whenever an equation of the form
$AX+XB=C$ arises with banded $A$ and symmetric $B$. In particular,
it is applied to the K- and L-step equations
\eqref{eq:k_eqn} and \eqref{eq:l_eqn} of the BUG integrator after
they are cast into the unified form \eqref{eq:nd_sylvester}.

\begin{algorithm}[htbp]
\caption{Low-cost solver for \eqref{eq:nd_sylvester} with $A$ banded and $B$ symmetric.}
\label{alg:sylvester-sym}
\begin{algorithmic}[1]
\REQUIRE
$A \in \mathbb{R}^{m\times m}$ banded with half-bandwidth $k$,
$B=B^\top \in \mathbb{R}^{r\times r}$,
$C\in\mathbb{R}^{m\times r}$.
\ENSURE
$X\in\mathbb{R}^{m\times r}$ satisfying $AX+XB=C$.

\algcomment{\textbf{Step 1}: Eigendecomposition of $B$:}
\STATE Compute the real orthogonal eigendecomposition
$B=Q\Lambda Q^\top$, where
$\Lambda=\operatorname{diag}(\lambda_1,\ldots,\lambda_r)$
and $Q^\top Q=I_r$.

\algcomment{\textbf{Step 2}: Diagonalization of the Sylvester equation:}
\STATE Set
$\widetilde{C} = CQ$ and $Y=XQ$, so that
$AY+Y\Lambda=\widetilde{C}$.

\algcomment{\textbf{Step 3}: Column-wise solution of the new Sylvester equation:}
\FOR{$j=1,\ldots,r$}
    \STATE Solve
    $(A+\lambda_j I)y_j=\widetilde{c}_j$
    by banded Gaussian elimination.
\ENDFOR

\STATE Assemble
$Y=[y_1,\dots,y_r]$.

\algcomment{\textbf{Step 4}: Back transformation:}
\STATE Recover the solution by
$X= YQ^\top$.

\end{algorithmic}
\end{algorithm}

\begin{remark}
\Cref{alg:sylvester-sym} can also be applied to mixed boundary conditions involving homogeneous Dirichlet and Neumann conditions.
\end{remark}

The computational complexity of \Cref{alg:sylvester-sym} is mainly determined by the four steps. Since $B\in\mathbb{R}^{r\times r}$ is symmetric, its eigendecomposition costs $O(r^3)$. The two matrix multiplications $\widetilde{C}=CQ$ and $X=YQ^\top$ each require $O(mr^2)$ operations. In Step~3, each shifted matrix $A+\lambda_j I$ has the same half-bandwidth $k$ as $A$ and can therefore be factorized using banded Gaussian elimination in $O(mk^2)$ operations, while the corresponding forward and backward substitutions require $O(mk)$ operations. Since the $r$ shifted systems are independent, their total cost is $O(rmk^2)$, with $O(rmk)$ additional work for the triangular solves. The back transformation $X=YQ^\top$ in Step~4 requires
$O(mr^2)$ operations. Consequently, the overall computational complexity of \Cref{alg:sylvester-sym} is $O\!\left(r^3+mr^2+mrk^2\right)$,
where the lower-order $O(rmk)$ term has been omitted. In particular, since $k$ is typically fixed and
$r\ll m$, the overall computational complexity is dominated by
$O(mr^2)$.

Regarding the storage cost, the banded matrix $A$ requires
$O(mk)$ storage, while the symmetric matrix $B$ and its
eigenvector matrix $Q$ require $O(r^2)$ storage. The matrices
$C$, $Y$, and $X$ each require $O(mr)$ storage. Since the
$r$ shifted banded systems in Step~3 can be solved sequentially, only
one banded LU factorization needs to be stored at a time, requiring
$O(mk)$ additional storage. Hence, the overall storage
requirement is $O(mk+mr+r^2)$.
In particular, since $k$ is typically fixed and $r\ll m$, the storage
cost is essentially $O(mr)$.

\subsubsection{Banded stiffness matrix with wrap-around entries: periodic BCs}

Since the basis functions associated with the nodes at the two ends of the domain are identified under periodic BCs, the periodicity introduces additional couplings between the first and last degrees of freedom. Consequently, the matrices $A_x$ and $A_y$ in \eqref{AxAy} retain the same $2k+1$ banded structure as in the Neumann and Dirichlet cases, but acquire additional nonzero entries outside the band. Nevertheless, both $A_x$ and $A_y$ remain symmetric.

\begin{lemma}\label{lem:psd}
Let $A_x$ and $A_y$ denote the stiffness matrices associated with the
one-dimensional periodic finite element discretizations in the
$x$- and $y$-directions, respectively. Then, the following results hold.
\begin{enumerate}
    \item[(i)] $A_x$ and $A_y$ are positive semidefinite, namely,
    \[
        A_x\succeq0,
        \qquad
        A_y\succeq0.
    \]
    
    \item[(ii)]
    Let $\widehat{A}_x$ and $\widehat{A}_y$ denote the
    $(2k+1)$-banded parts of $A_x$ and $A_y$, respectively, obtained by
    retaining only the entries within the $(2k+1)$ banded diagonals.
    Then, $\widehat{A}_x$ and $\widehat{A}_y$ are also positive semidefinite, namely,
    \[
        \widehat{A}_x\succeq0,
        \qquad
        \widehat{A}_y\succeq0.
    \]
\end{enumerate}
\end{lemma}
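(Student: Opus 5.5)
The plan is to prove both parts by writing the stiffness matrix as a sum of embedded element contributions and checking positive semidefiniteness for each contribution separately. I give the argument for $A_x$; the case of $A_y$ is identical with $(x,M,\varphi)$ replaced by $(y,N,\psi)$.

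\textbf{Part (i).} Let $c\in\mathbb{R}^m$ and set $v=\sum_{i=1}^m c_i\varphi_i\in V_{x,h}$. By the definition \eqref{AxAy} and bilinearity,
\[
c^\top A_x c=(\partial_x v,\partial_x v)_h=\sum_p \bar\omega_p^x\,|\partial_x v(x_p)|^2\ge 0,
\]
because the assembled Gauss--Lobatto weights are positive. (One can also note that $(\partial_x v)^2$ has degree $2k-2$ on each element, so the quadrature is exact and the value equals $\|\partial_x v\|_{L^2}^2$; positivity of the weights is already enough.) This argument does not use the boundary condition, so it also covers the periodic case.

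\textbf{Part (ii).} Write $A_x=\sum_{i=1}^M E_i$, where $E_i$ is the local $(k+1)\times(k+1)$ element stiffness matrix on $I_i^x$, embedded at the global indices of that element. By the argument of part (i) applied on a single element, each local matrix is positive semidefinite. For $i<M$ the global indices of $I_i^x$ are the consecutive integers $(i-1)k+1,\dots,ik+1$, so all entries of $E_i$ lie inside the band. Under periodicity, the last element $I_M^x$ has indices $(M-1)k+1,\dots,Mk$ together with the right endpoint $Mk+1$, which is identified with index $1$. Assuming $M\ge3$, each coupling in $E_M$ between index $1$ and an index $j\in\{(M-1)k+1,\dots,Mk\}$ satisfies $|1-j|>k$, so it lies outside the band. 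The remaining entries of $E_M$ lie inside the band.

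Hence $\widehat A_x=\sum_{i<M}E_i+\widehat E_M$. Here $\widehat E_M$ is obtained from the local matrix of $I_M^x$,
\[
E=\begin{pmatrix}E_{11}&e\\ e^\top&d\end{pmatrix},
\]
with the last row and column corresponding to the endpoint node, by deleting $e$. This gives $\widehat E_M=\operatorname{diag}(E_{11},d)$, which is block diagonal. Since $E_{11}$ is a principal submatrix of the positive semidefinite matrix $E$, it is positive semidefinite, and $d\ge0$ because it is a diagonal entry of $E$. Therefore $\widehat E_M\succeq0$, and $\widehat A_x$ is a sum of positive semidefinite matrices, so $\widehat A_x\succeq0$.

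The main obstacle is the bookkeeping that exactly the entries $e$, the couplings between index $1$ and the interior or left nodes of the last element, fall outside the band while nothing else does. This is why $M\ge3$ is assumed. For very small $M$, with $M\le2$, the band definition must be checked directly: either no entries are dropped, in which case $\widehat A_x=A_x$ and part (i) applies, or the dropped set is again a last-row/last-column block of some element matrix and the same block-diagonal argument applies.
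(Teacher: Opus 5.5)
Your proof is correct for $M\ge3$ and is essentially the paper's proof. Part (i) uses the quadratic form $c^\top A_xc=\sum_i\int_{I_i^x}|\partial_xv|^2\,dx\ge0$. Part (ii) uses the element decomposition: only the last element's coupling between the identified endpoint and its other nodes leaves the band, so $K^{(M)}$ is replaced by $\operatorname{diag}(K^{(M)}_{1:k,1:k},K^{(M)}_{k+1,k+1})\succeq0$. In (i), read your sum $\sum_p\bar\omega_p^x|\partial_xv(x_p)|^2$ element by element, since $\partial_xv$ jumps at mesh nodes. Your exactness remark already reduces it to the paper's integral form.

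The one claim that does not hold as written is your aside on small $M$. For $M=2$ and $k\ge2$, the last element carries the global indices $k+1,\dots,2k$ and $1$. The coupling between $1$ and $k+1$ has index distance exactly $k$, so it stays in the band; it even receives contributions from both elements. Only the couplings of index $1$ with $k+2,\dots,2k$ are removed. The truncated element matrix then has the form
\[
\begin{pmatrix} P & c\,e_1\\ c\,e_1^\top & d\end{pmatrix},
\]
where $P$ is your block $E_{11}$ and $c$ is the entry of $E$ coupling the first local node with the endpoint node. This matrix is not block diagonal, so the principal-submatrix argument does not apply. You would need a separate check, e.g.\ of the Schur complement $d-c^2(P^{-1})_{11}\ge0$.

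The paper tacitly assumes $M\ge3$ as well: its description of the wrap-around entries $A(1,m-\ell+1)$, $\ell=1,\dots,k$, as lying off the band also fails for $M=2$. The cleanest fix is to state $M\ge3$ as a hypothesis rather than claim the small cases follow from the same argument.
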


\begin{proof}
It suffices to prove the assertions for $A_x$ and $\widehat{A}_x$;
the arguments for $A_y$ and $\widehat{A}_y$ are analogous.

{(i)}
Let $v=(v_1,\ldots,v_m)^\top\in\mathbb{R}^m$ be arbitrary, and let
$v_h$ be the corresponding finite element function,
\(
    v_h(x)=\sum_{j=1}^m v_j\phi_j(x),
\)
where $\{\phi_j\}_{j=1}^m$ denotes the finite element basis.
By the definition of the stiffness matrix,
\begin{equation}
    v^\top A_x v
    =
    \sum_{i=1}^{M}
    \int_{I_i^x}
    \left(v_h'(x)\right)^2\,dx
    \ge 0.
\end{equation}
Hence,
\[
    A_x\succeq0.
\]

{(ii)}
For each element $I_i^x$, let
\[
    K^{(i)}
    =
    \left[
        \int_{I_i^x}
        \phi_{(i-1)k+p}'(x)
        \phi_{(i-1)k+q}'(x)\,dx
    \right]_{p,q=1}^{k+1}
    \in\mathbb{R}^{(k+1)\times(k+1)}
\]
denote the corresponding local stiffness matrix. For any
$z=(z_1,\ldots,z_{k+1})^\top\in\mathbb{R}^{k+1}$,
\[
\begin{aligned}
    z^\top K^{(i)}z
    &=
    \int_{I_i^x}
    \left(
        \sum_{p=1}^{k+1}
        z_p\phi_{(i-1)k+p}'(x)
    \right)^2\,dx
    \ge0.
\end{aligned}
\]
Thus,
\[
    K^{(i)}\succeq0,
    \qquad i=1,\ldots,M.
\]

For $i=1,\ldots,M-1$, the standard finite element assembly gives
\[
    R_i^\top K^{(i)}R_i,
\]
where $R_i \in \mathbb{R}^{(k+1)\times m}$ denotes the local-to-global restriction matrix.

For the last element $I_M^x$, the periodic identification of the
endpoints causes the last local degree of freedom to coincide with
the first global degree of freedom. Consequently, the corresponding
entries coupling these two sets of degrees of freedom appear in the
corner blocks of $A_x$ and lie outside the $(2k+1)$ banded diagonals.
Removing these entries amounts to replacing $K^{(M)}$ by
\[
    \widehat{K}^{(M)}
    =
    \begin{pmatrix}
        K^{(M)}_{1:k,1:k} & 0\\
        0 & K^{(M)}_{k+1,k+1}
    \end{pmatrix}.
\]
Since $K^{(M)}\succeq0$, its principal submatrix satisfies
\[
    K^{(M)}_{1:k,1:k}\succeq0.
\]
Moreover,
\[
    K^{(M)}_{k+1,k+1}
    =
    \int_{I_M^x}
    \left|\phi_m'(x)\right|^2\,dx
    \ge0.
\]
Therefore,
\[
    \widehat{K}^{(M)}\succeq0.
\]

It follows that the $(2k+1)$-banded part of $A_x$ admits the
decomposition
\[
    \widehat{A}_x
    =
    \sum_{i=1}^{M-1}
    R_i^\top K^{(i)}R_i
    +
    R_M^\top\widehat{K}^{(M)}R_M,
\]
where $R_M$ denotes the local-to-global restriction matrix without
the periodic wrap-around contribution.

Therefore, for any $v\in\mathbb{R}^m$,
\begin{align}
    v^\top\widehat{A}_xv
    &=
    \sum_{i=1}^{M-1}
    (R_iv)^\top K^{(i)}(R_iv)
    +
    (R_Mv)^\top
    \widehat{K}^{(M)}
    (R_Mv)
    \ge0.
\end{align}
Hence,
\[
    \widehat{A}_x\succeq0.
\]
The proof is complete.
\end{proof}

The matrix $A$ in \eqref{eq:nd_sylvester} inherits the sparsity pattern of $A_x$ and $A_y$ for the $K$- and $L$-equations \eqref{Ksylvester} and \eqref{Lsylvester}, respectively. However, unlike $A_x$ and $A_y$, the matrix $A$ is generally nonsymmetric. In particular, $A$ has a banded structure with bandwidth $2k+1$ and half-bandwidth $k$, together with $2k$ additional nonzero entries induced by the periodic coupling.
More specifically, these $2k$ additional nonzero entries occur at
\[
A(1,m-\ell+1)\neq 0,\qquad
A(m-\ell+1,1)\neq 0,\qquad
\ell=1,\ldots,k,
\]
which are the last $k$ entries in the first row and first column, respectively.
Thus, $A$ can be viewed as a matrix with half-bandwidth $k$ and additional nonzero entries connecting the two ends of the domain.
As in the Neumann and Dirichlet cases, the matrix $B\in\mathbb{R}^{r\times r}$ in \eqref{eq:nd_sylvester} remains symmetric.

Because of the additional nonzero entries outside the $2k+1$ banded diagonals of $A$, the solver for \eqref{eq:nd_sylvester} developed in \Cref{alg:sylvester-sym} is no longer directly applicable. Therefore, additional treatment is required to account for these extra terms. To take advantage of \Cref{alg:sylvester-sym}, we decompose $A$ as
\begin{equation}
    \label{eq:A-periodic-structure}
    A = \widehat{A} + \widehat U \widehat{V}^\top,
\end{equation}
where $\widehat{A}\in\R^{m\times m}$ denotes the $2k+1$-banded part of $A$, and $\widehat U, \widehat V\in\R^{m\times 2}$ are the rank-$2$ Woodbury factors given by
\begin{equation}
    \label{eq:woodbury-factors}
    \widehat U =
    \begin{pmatrix}
        e_1 & v
    \end{pmatrix},
    \qquad
    \widehat V =
    \begin{pmatrix}
        u & e_1
    \end{pmatrix}.
\end{equation}
Here, $e_1\in\R^m$ is the first canonical basis vector, i.e.,
\begin{equation}
    e_1(j)=\delta_{1j},
\end{equation}
where $\delta_{ij}$ denotes the Kronecker delta. The vectors $u,v\in\R^m$ are defined by
\begin{equation}
    \label{eq:u-vector}
    u =
    \begin{pmatrix}
        \mathbf{0}_{m-k}\\
        A_{1,m-k+1}\\
        A_{1,m-k+2}\\
        \vdots\\
        A_{1,m}
    \end{pmatrix},
    \qquad
    v =
    \begin{pmatrix}
        \mathbf{0}_{m-k}\\
        A_{m-k+1,1}\\
        A_{m-k+2,1}\\
        \vdots\\
        A_{m,1}
    \end{pmatrix}.
\end{equation}
Consequently,
\begin{equation}
    \widehat U\widehat{V}^\top
    =
    \begin{pmatrix}
        e_1 & v
    \end{pmatrix}
    \begin{pmatrix}
        u^\top\\
        e_1^\top
    \end{pmatrix}
    =
    e_1u^\top + ve_1^\top,
\end{equation}
so that the rank-$2$ correction accounts for the additional nonzero entries of $A$ outside its $2k+1$ banded diagonals.

To solve \eqref{eq:nd_sylvester} with $A$ of the structure
\eqref{eq:A-periodic-structure}, we only need to modify
\textbf{Step 3} of the algorithm proposed for the Neumann and
Dirichlet boundary cases. This can be achieved using the
Sherman--Morrison--Woodbury identity \cite{woodbury1950inverting},
whereby each shifted linear system is reduced to a purely banded solve
plus a correction.

For the algorithm with periodic BCs, we first repeat
\textbf{Steps 1 and 2} of \Cref{alg:sylvester-sym} for the Neumann and
Dirichlet boundary cases. The modified \textbf{Step 3} for the periodic
boundary case is then given as follows.

\subsection*{Step 3: Woodbury-corrected banded solve}

Let $y_j,\widetilde{c}_j\in\mathbb{R}^m$ denote the $j$-th columns of
$Y$ and $\widetilde{C}$, respectively. Since $\Lambda$ obtained in \textbf{Step 1} is diagonal,
\eqref{eq:decoupled-matrix} decouples into $r$ independent shifted
linear systems:
\begin{equation}
    (A+\lambda_j I)y_j=\widetilde{c}_j,
    \qquad
    j=1,\dots,r.
    \label{eq:shifted-systemperiod}
\end{equation}

Using the decomposition \eqref{eq:A-periodic-structure}, we have
\begin{equation}
    A+\lambda_j I
    =
    \widehat{A}_j+\widehat{U}\widehat{V}^{\top},
    \qquad
    \widehat{A}_j:=\widehat{A}+\lambda_j I.
    \label{eq:shifted-periodic-decomp}
\end{equation}
Since the shift $\lambda_j I$ only modifies the diagonal entries,
$\widehat{A}_j$ remains a purely banded matrix with half-bandwidth $k$.

\begin{lemma}\label{Ajnons}
For each $j=1,\ldots,r$,
\begin{enumerate}
    \item[(i)] the matrices $\widehat{A}_j$ and
    $\widehat{A}_j+\widehat{U}\widehat{V}^{\top}$ in
    \eqref{eq:shifted-periodic-decomp} are nonsingular;
    \item[(ii)] the matrix
    \(
    I_2+\widehat{V}^{\top}\widehat{A}_j^{-1}\widehat{U} \in \mathbb{R}^{2\times2}
\)
    is nonsingular.
\end{enumerate}
\end{lemma}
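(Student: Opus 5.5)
The plan is to reduce everything to symmetric positive definite matrices by undoing the diagonal row scaling $M_x^{-1}$, and then to use the matrix determinant lemma for part (ii). I treat the $K$-equation, where $A=-\Delta t\,L_x=\Delta t\,M_x^{-1}A_x$ and $B=(1-\Delta t\,\alpha)I_r+\Delta t\,(V^\mathfrak{n})^\top A_yV^\mathfrak{n}$. The $L$-equation is identical after swapping the roles of $x$ and $y$ and of $U^\mathfrak{n}$ and $V^\mathfrak{n}$.

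\textbf{Step 1: the shifts are positive.} By \Cref{lem:psd}(i), $A_y\succeq0$, so $(V^\mathfrak{n})^\top A_yV^\mathfrak{n}\succeq0$. Hence every eigenvalue $\lambda_j$ of $B$ from \eqref{eq:eig-B} satisfies $\lambda_j\ge 1-\Delta t\,\alpha>0$, using $\Delta t>0$ and $\alpha\le0$.

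\textbf{Step 2: the banded part commutes with the diagonal scaling.} Since $M_x$ is diagonal, left multiplication by $M_x^{-1}$ only rescales rows and does not change the sparsity pattern. So the $(2k+1)$-banded part of $A$ is exactly
\[
\widehat{A}=\Delta t\,M_x^{-1}\widehat{A}_x ,
\]
where $\widehat{A}_x$ is the banded part of $A_x$ from \Cref{lem:psd}(ii). The off-band corner entries are then exactly $e_1u^\top+ve_1^\top$ in \eqref{eq:A-periodic-structure}. I will check that this rank-2 term does not double count the $(1,1)$ entry: $u$ and $v$ vanish in their first $m-k$ components, so this holds whenever $m>k$, which I assume for a genuine periodic mesh. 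This bookkeeping, namely that the corner entries of $M_x^{-1}A_x$ are exactly those removed in forming $\widehat{A}_x$ and that no diagonal entry is affected, is the step I expect to need the most care. It reduces to the element-by-element description in the proof of \Cref{lem:psd}(ii).

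\textbf{Step 3: nonsingularity in (i).} Write
\[
\widehat{A}_j=M_x^{-1}\bigl(\Delta t\,\widehat{A}_x+\lambda_jM_x\bigr),
\qquad
\widehat{A}_j+\widehat{U}\widehat{V}^\top=A+\lambda_jI=M_x^{-1}\bigl(\Delta t\,A_x+\lambda_jM_x\bigr).
\]
In each bracket, a positive semidefinite matrix (by \Cref{lem:psd}(ii) and (i), respectively) is added to $\lambda_jM_x$. The latter is symmetric positive definite because $\lambda_j>0$ and $M_x$ is diagonal positive definite. So each bracket is symmetric positive definite, hence invertible. Since $M_x^{-1}$ is invertible, both products are nonsingular, which proves (i).

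\textbf{Step 4: part (ii).} Because $\widehat{A}_j$ is invertible, the matrix determinant lemma gives
\[
\det\bigl(\widehat{A}_j+\widehat{U}\widehat{V}^\top\bigr)=\det(\widehat{A}_j)\,\det\bigl(I_2+\widehat{V}^\top\widehat{A}_j^{-1}\widehat{U}\bigr).
\]
The left-hand side is nonzero by (i), so the $2\times2$ factor $I_2+\widehat{V}^\top\widehat{A}_j^{-1}\widehat{U}$ is nonsingular. This also justifies applying the Sherman--Morrison--Woodbury formula in Step 3.
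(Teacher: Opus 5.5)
Your proof is correct and follows the paper's argument: you get $\lambda_j\ge 1-\Delta t\,\alpha>0$ from $A_y\succeq0$, use the positive semidefiniteness of $\widehat{A}_x$ and $A_x$ from \Cref{lem:psd} together with the diagonal positive definite $M_x$, and use the factorization $\det(\widehat{A}_j+\widehat{U}\widehat{V}^{\top})=\det(\widehat{A}_j)\det(I_2+\widehat{V}^{\top}\widehat{A}_j^{-1}\widehat{U})$ for (ii). The only cosmetic difference is that you write $\widehat{A}_j=M_x^{-1}(\Delta t\,\widehat{A}_x+\lambda_jM_x)$ with a symmetric positive definite bracket, whereas the paper uses the similarity $M_x^{-1}\widehat{A}_x\sim M_x^{-1/2}\widehat{A}_xM_x^{-1/2}\succeq0$ and shifts the eigenvalues by $\lambda_j$; one small refinement to your Step~2 is that \eqref{eq:A-periodic-structure} holds exactly only when $m>2k$ (i.e.\ $M\ge3$), since otherwise some of the last $k$ entries of the first row and column lie inside the band and are counted twice, but this belongs to the paper's setup rather than to the lemma.
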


\begin{proof}
(i) Recall that
\[
    A=-\Delta t\,L_x
    =\Delta t\,M_x^{-1}A_x
    \qquad\text{or}\qquad
    A=-\Delta t\,L_y
    =\Delta t\,M_y^{-1}A_y.
\]
Let $\widehat{A}_x$ and $\widehat{A}_y$ be the matrices defined in
\Cref{lem:psd}. Then, for the matrix $\widehat{A}$ defined in
\eqref{eq:A-periodic-structure}, we have
\[
    \widehat{A}
    =\Delta t\,M_x^{-1}\widehat{A}_x
    \qquad\text{or}\qquad
    \widehat{A}
    =\Delta t\,M_y^{-1}\widehat{A}_y.
\]
Since $M_x\succ0$ and $\widehat{A}_x\succeq0$ by
\Cref{lem:psd},
\[
    M_x^{-1}\widehat{A}_x
    \sim
    M_x^{-1/2}\widehat{A}_xM_x^{-1/2}
    \succeq0.
\]
Hence, all eigenvalues of $\widehat{A}$ are real and nonnegative.
The same conclusion follows when
\[
    \widehat{A}
    =\Delta t\,M_y^{-1}\widehat{A}_y.
\]

Moreover, since $\alpha\leq0$ and $A_y\succeq0$ by
\Cref{lem:psd}, we have
\[
    B
    =(1-\Delta t\,\alpha)I_r
      +\Delta t\,(V^\mathfrak{n})^\top A_yV^\mathfrak{n}
    \succ0.
\]
Therefore, all eigenvalues $\lambda_j$ of $B$ satisfy
\[
    \lambda_j>0,
    \qquad j=1,\ldots,r.
\]

By \eqref{eq:shifted-periodic-decomp}, each $\widehat{A}_j$ is a
positive shift of $\widehat{A}$, namely,
\[
    \widehat{A}_j=\widehat{A}+\lambda_j I.
\]
Thus, if $\mu$ is any eigenvalue of $\widehat{A}$, then
$\mu\geq0$, and the corresponding eigenvalue of $\widehat{A}_j$ is
\[
    \mu+\lambda_j>0.
\]
Hence, $\widehat{A}_j$ is nonsingular for every $j=1,\ldots,r$.

Finally, by \eqref{eq:shifted-periodic-decomp},
\[
    \widehat{A}_j+\widehat{U}\widehat{V}^{\top}
    =A+\lambda_j I.
\]
Since $M_x\succ0$ and $A_x\succeq0$ by \Cref{lem:psd},
\[
    M_x^{-1}A_x
    \sim
    M_x^{-1/2}A_xM_x^{-1/2}
    \succeq0.
\]
Therefore, all eigenvalues of $A$ are real and nonnegative. The same
conclusion follows from
\[
    A=\Delta t\,M_y^{-1}A_y
\]
using $M_y\succ0$ and $A_y\succeq0$. Consequently, all eigenvalues of
\[
    A+\lambda_j I
    =\widehat{A}_j+\widehat{U}\widehat{V}^{\top}
\]
are strictly positive, since $\lambda_j>0$. Hence,
$\widehat{A}_j+\widehat{U}\widehat{V}^{\top}$ is nonsingular for every
$j=1,\ldots,r$.

(ii)
By (i), both $\widehat{A}_j$ and
$\widehat{A}_j+\widehat{U}\widehat{V}^{\top}$ are nonsingular. Since
$\widehat{A}_j$ is nonsingular,
\[
\begin{aligned}
\det\left(
    \widehat{A}_j+\widehat{U}\widehat{V}^{\top}
\right)
&=
% \det\left[
%     \widehat{A}_j
%     \left(
%         I_m+\widehat{A}_j^{-1}
%         \widehat{U}\widehat{V}^{\top}
%     \right)
% \right] =
\det(\widehat{A}_j)
\det\left(
    I_m+\widehat{A}_j^{-1}
    \widehat{U}\widehat{V}^{\top}
\right).
\end{aligned}
\]
By Sylvester's determinant theorem,
\[
\det(I_m+XY)=\det(I_2+YX),
\]
with
\[
X=\widehat{A}_j^{-1}\widehat{U}
\in\mathbb{R}^{m\times2},
\qquad
Y=\widehat{V}^{\top}
\in\mathbb{R}^{2\times m}.
\]
Hence,
\[
\det\left(
    I_m+\widehat{A}_j^{-1}
    \widehat{U}\widehat{V}^{\top}
\right)
=
\det\left(
    I_2+\widehat{V}^{\top}
    \widehat{A}_j^{-1}\widehat{U}
\right).
\]
Consequently,
\[
\det\left(
    \widehat{A}_j+\widehat{U}\widehat{V}^{\top}
\right)
=
\det(\widehat{A}_j)
\det\left(
    I_2+\widehat{V}^{\top}
    \widehat{A}_j^{-1}\widehat{U}
\right).
\]
By (i), the determinants of
$\widehat{A}_j$ and
$\widehat{A}_j+\widehat{U}\widehat{V}^{\top}$ are both nonzero.
Therefore,
\[
    \det\left(
        I_2+\widehat{V}^{\top}
        \widehat{A}_j^{-1}\widehat{U}
    \right)\neq0.
\]
Thus, the conclusion (ii) holds.
% \[
%     I_2+\widehat{V}^{\top}\widehat{A}_j^{-1}\widehat{U}
% \]
% is nonsingular.
\end{proof}

Based on \Cref{Ajnons}, we have the following result.
\begin{lemma}\label{lem:smw}
For matrix $\widehat{A}_j+\widehat{U}\widehat{V}^{\top}$ in \eqref{eq:shifted-periodic-decomp},  it holds the Sherman--Morrison--Woodbury \cite{woodbury1950inverting} identity,
\begin{equation}
    \left(
        \widehat{A}_j+\widehat{U}\widehat{V}^{\top}
    \right)^{-1}
    =
    \widehat{A}_j^{-1}
    -
    \widehat{A}_j^{-1}\widehat{U}
    \left(
        I_2+\widehat{V}^{\top}\widehat{A}_j^{-1}\widehat{U}
    \right)^{-1}
    \widehat{V}^{\top}\widehat{A}_j^{-1}.
    \label{eq:woodbury-identity}
\end{equation}
\end{lemma}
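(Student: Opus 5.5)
The plan is to verify the Woodbury identity \eqref{eq:woodbury-identity} directly. By \Cref{Ajnons}, $\widehat{A}_j$, $\widehat{A}_j+\widehat{U}\widehat{V}^{\top}$ and $S_j:=I_2+\widehat{V}^{\top}\widehat{A}_j^{-1}\widehat{U}$ are all nonsingular, so every inverse on the right-hand side is well defined. Let $G$ denote the right-hand side,
\[
G:=\widehat{A}_j^{-1}-\widehat{A}_j^{-1}\widehat{U}S_j^{-1}\widehat{V}^{\top}\widehat{A}_j^{-1}.
\]
Since $\widehat{A}_j+\widehat{U}\widehat{V}^{\top}$ is a square matrix, it suffices to show that $G$ is a one-sided inverse, say $(\widehat{A}_j+\widehat{U}\widehat{V}^{\top})G=I_m$.

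The key step is to expand the product:
\[
(\widehat{A}_j+\widehat{U}\widehat{V}^{\top})G
= I_m+\widehat{U}\widehat{V}^{\top}\widehat{A}_j^{-1}
-\widehat{U}S_j^{-1}\widehat{V}^{\top}\widehat{A}_j^{-1}
-\widehat{U}\bigl(\widehat{V}^{\top}\widehat{A}_j^{-1}\widehat{U}\bigr)S_j^{-1}\widehat{V}^{\top}\widehat{A}_j^{-1}.
\]
Group the last two terms as $-\widehat{U}\bigl(I_2+\widehat{V}^{\top}\widehat{A}_j^{-1}\widehat{U}\bigr)S_j^{-1}\widehat{V}^{\top}\widehat{A}_j^{-1}=-\widehat{U}\widehat{V}^{\top}\widehat{A}_j^{-1}$. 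This cancels the second term and leaves $I_m$.

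Optionally, the same computation applied to $G(\widehat{A}_j+\widehat{U}\widehat{V}^{\top})$ confirms the two-sided inverse directly, but square-matrix invertibility from \Cref{Ajnons}(i) already makes this unnecessary.

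There is no real obstacle here; the only point requiring care is that the invertibility of the $2\times2$ capacitance matrix $S_j$ is not automatic. This is exactly why \Cref{Ajnons}(ii) was established beforehand, using $\lambda_j>0$ together with the positive semidefiniteness from \Cref{lem:psd}. I would cite it explicitly at the moment $S_j^{-1}$ is introduced.
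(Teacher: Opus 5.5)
Your proof is correct and follows the same route as the paper. The paper gives no separate argument: it takes nonsingularity of $\widehat{A}_j$ and of $I_2+\widehat{V}^{\top}\widehat{A}_j^{-1}\widehat{U}$ from \Cref{Ajnons} and then invokes the standard Sherman--Morrison--Woodbury identity, which your direct multiplication verifies. One minor point: a one-sided inverse of a square matrix is automatically two-sided, so your computation needs only the nonsingularity of $\widehat{A}_j$ and of the $2\times 2$ capacitance matrix; invertibility of $\widehat{A}_j+\widehat{U}\widehat{V}^{\top}$ then follows and need not be assumed.
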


By \Cref{lem:smw}, the solution of \eqref{eq:shifted-systemperiod} can be given analytically by
\begin{equation}
\begin{aligned}
    y_j
    &=
    \widehat{A}_j^{-1}\widetilde{c}_j
    -
    \widehat{A}_j^{-1}\widehat{U}
    \left(
        I_2+\widehat{V}^{\top}\widehat{A}_j^{-1}\widehat{U}
    \right)^{-1}
    \widehat{V}^{\top}\widehat{A}_j^{-1}\widetilde{c}_j,
    \qquad j=1,\ldots,r.
\end{aligned}
\label{eq:step3sol}
\end{equation}

In practice, $\widehat{A}_j^{-1}$ is not formed explicitly, since it
would generally be dense. Instead, all required operations are
performed using banded linear solves. Specifically, for each
$j=1,\ldots,r$, we first solve
\begin{equation}
    \widehat{A}_j\widehat{y}_j=\widetilde{c}_j,
    \qquad
    \widehat{y}_j\in\mathbb{R}^m,
    \label{eq:banded-rhs}
\end{equation}
and then solve the matrix-valued banded system
\begin{equation}
    \widehat{A}_j\widehat{Z}_j=\widehat{U},
    \qquad
    \widehat{Z}_j\in\mathbb{R}^{m\times 2}.
    \label{eq:banded-U}
\end{equation}
The two columns of $\widehat{Z}_j$ can be obtained simultaneously
using the same banded factorization of $\widehat{A}_j$. The solution
$y_j$ is then computed as
\begin{equation}
    y_j
    =
    \widehat{y}_j
    -
    \widehat{Z}_j
    \left(
        I_2+\widehat{V}^{\top}\widehat{Z}_j
    \right)^{-1}
    \widehat{V}^{\top}\widehat{y}_j.
    \label{eq:woodbury-correction}
\end{equation}
Thus, the periodic shifted system is reduced to banded linear solves
with $\widehat{A}_j$ plus a low-rank correction.
Repeating this procedure for $j=1,\ldots,r$ yields
$Y=[y_1,\ldots,y_r]$. The solution of the original Sylvester equation
is then recovered by the same back transformation (\textbf{Step 4})
\(
    X=YQ^\top.
\)

% We next show that the Sherman--Morrison--Woodbury identity \eqref{eq:woodbury-identity} holds.

The implementation of the low-cost solver for periodic BCs is given in \Cref{alg:sylvester-periodic}.

\begin{algorithm}[htbp]
\caption{Low-cost solver for \eqref{eq:nd_sylvester} under periodic
BCs.}
\label{alg:sylvester-periodic}
\begin{algorithmic}[1]
\REQUIRE
$A\in\mathbb{R}^{m\times m}$ with the periodic structure
%$A=\widehat{A}+\widehat{U}\widehat{V}^{\top}$, 
\eqref{eq:A-periodic-structure},
where $\widehat{A}$ is banded with half-bandwidth $k$ and
$\widehat{U},\widehat{V}\in\mathbb{R}^{m\times 2}$;
$B=B^\top\in\mathbb{R}^{r\times r}$;
$C\in\mathbb{R}^{m\times r}$.
\ENSURE
$X\in\mathbb{R}^{m\times r}$ satisfying $AX+XB=C$.

\algcomment{\textbf{Step 1}: Eigendecomposition of $B$:}
\STATE Compute the real orthogonal eigendecomposition
$B=Q\Lambda Q^\top$, where
$\Lambda=\operatorname{diag}(\lambda_1,\ldots,\lambda_r)$
and $Q^\top Q=I_r$.

\algcomment{\textbf{Step 2}: Diagonalization of the Sylvester equation:}
\STATE Set
$\widetilde{C}=CQ$.
\STATE Let $Y=XQ$, so that
$AY+Y\Lambda=\widetilde{C}$.

\algcomment{\textbf{Step 3}: Woodbury-corrected shifted solves:}
\FOR{$j=1,\ldots,r$}
    \STATE Form the banded matrix
    $\widehat{A}_j=\widehat{A}+\lambda_j I$.

    \STATE Compute a banded factorization of $\widehat{A}_j$.

    \STATE Solve
    $\widehat{A}_j\widehat{y}_j=\widetilde{c}_j$.

    \STATE Solve \(
        \widehat{Z}_j\in\mathbb{R}^{m\times 2}
    \) from
    \(
        \widehat{A}_j\widehat{Z}_j=\widehat{U}.
        \)

    \STATE Form the matrix
    \(
        R_j=I_2+\widehat{V}^{\top}\widehat{Z}_j \in \mathbb{R}^{2\times2}.
    \)

    \STATE Compute
    \(
        g_j=R_j^{-1}\widehat{V}^{\top}\widehat{y}_j.
    \)

    \STATE Compute
    \(
        y_j=\widehat{y}_j-\widehat{Z}_j g_j.
    \)
\ENDFOR

\STATE Assemble
$Y=[y_1,\ldots,y_r]$.

\algcomment{\textbf{Step 4}: Back transformation:}
\STATE Recover the solution by
$X=YQ^\top$.

\end{algorithmic}
\end{algorithm}

\begin{remark}
    \Cref{alg:sylvester-periodic} applies to mixed BCs with a periodic BC in one spatial direction. The homogeneous Dirichlet and Neumann cases are also included as special cases. In these cases, $\widehat{U}\widehat{V}^{\top}=0$ in \eqref{eq:shifted-periodic-decomp}, so that the correction term in \eqref{eq:woodbury-correction} vanishes.
\end{remark}

The computational complexity of \Cref{alg:sylvester-periodic} is mainly
determined by the four steps. Since $B\in\mathbb{R}^{r\times r}$ is
symmetric, its eigendecomposition costs $O(r^3)$. The two
matrix multiplications $\widetilde{C}=CQ$ and $X=YQ^\top$ each require
$O(mr^2)$ operations. In Step~3, for each eigenvalue
$\lambda_j$, the shifted matrix
$\widehat{A}_j=\widehat{A}+\lambda_j I$ has the same half-bandwidth $k$
as $\widehat{A}$ and can therefore be factorized using banded Gaussian
elimination in $O(mk^2)$ operations. The corresponding
systems with one and two right-hand sides require
$O(mk)$ operations for the forward and backward substitutions.
The formation of
$I_2+\widehat{V}^{\top}\widehat{Z}_j$ requires $O(m)$
additional operations, while the subsequent solution of the
$2\times2$ linear system requires $O(1)$ operations. Since the
$r$ shifted systems are independent, their total cost is
$O(rmk^2)$, with $O(rmk)$ additional work for the
triangular solves and Woodbury corrections. The back transformation
$X=YQ^\top$ in Step~4 requires $O(mr^2)$ operations.
Consequently, the overall computational complexity of
\Cref{alg:sylvester-periodic} is
\(
    O\!\left(r^3+mr^2+mrk^2\right),
\)
where the lower-order $O(rmk)$ and $O(rm)$ terms
have been omitted. In particular, since $k$ is
typically fixed and $r\ll m$, the overall computational complexity is
dominated by $O(mr^2)$. 

Regarding the storage cost, the banded matrix $\widehat{A}$ requires
$O(mk)$ storage, while the Woodbury factors
$\widehat{U},\widehat{V}\in\mathbb{R}^{m\times2}$ require
$O(m)$ storage. The symmetric matrix $B$ and its eigenvector
matrix $Q$ require $O(r^2)$ storage. The matrices
$C$, $Y$, and $X$ each require $O(mr)$ storage. Since the
$r$ shifted banded systems in Step~3 can be solved sequentially, only
one banded LU factorization needs to be stored at a time, requiring
$O(mk)$ additional storage. The intermediate matrix
$\widehat{Z}_j\in\mathbb{R}^{m\times2}$ requires $O(m)$
storage. Hence, the overall storage requirement is
\(
    O(mk+mr+r^2).
\)
In particular, since $k$ is typically fixed and $r\ll m$, the storage
cost is essentially $O(mr)$.

\subsection{Computational and storage costs}\label{sec:cost}

Finally, we summarize the computational and storage costs of the three
approaches considered in this work. For clarity, we classify the methods as follows:
\begin{itemize}
    \item \textbf{Approach I:} FR-MLFEM with the conventional Sylvester solver;
    
    \item \textbf{Approach II:} BUG-MLFEM with the conventional Sylvester solver;

    \item \textbf{Approach III (Proposed):} BUG-MLFEM utilizing the proposed low-cost non-iterative Sylvester solver.
    The $K/L$-equations are solved via \Cref{alg:sylvester-sym} (Dirichlet/Neumann) and \Cref{alg:sylvester-periodic} (periodic), while the $S$-equation uses the conventional Sylvester solver.
\end{itemize}

In Approach~I, the full-rank matrix equation \eqref{frfemmat} arising from
FR-MLFEM is solved using the Bartels--Stewart algorithm
\cite{bartels1972algorithm}. In Approach~II, the same algorithm is applied
to the matrix equations arising in \Cref{alg:KLS-Steps}, yielding the
conventional Sylvester solver. In Approach~III, these equations are solved
using the proposed low-cost, non-iterative Sylvester solvers. The
computational and storage costs per time step for the three approaches
are summarized in \Cref{tab:cost-storage-comparison}.
As shown in \Cref{tab:cost-storage-comparison}, Approach~III substantially
reduces both computational and storage costs relative to Approaches~I
and~II. Specifically, its computational and storage complexities per
time step are
\(
O\bigl((m+n)(r^2+rk^2)+r^3\bigr)\)
{and} \(
O\bigl((m+n)(k+r)+r^2\bigr),
\)
respectively. When the polynomial degree $k$ is fixed and the rank
satisfies $r\ll m,n$, these bounds simplify to
$O((m+n)r^2)$ for computation and $O((m+n)r)$ for storage, highlighting
the efficiency of the proposed approach in the low-rank regime.

\begin{table}[H]
\centering
\caption{Computational and storage costs per time step.}
\label{tab:cost-storage-comparison}
\begin{tabular}{@{}lll@{}}
\toprule
\textbf{Approach} & \textbf{Computational cost} & \textbf{Storage cost} \\
\midrule
Approach I
    & $O(m^2n+mn^2)$
    & $O(m^2+n^2)$ \\
Approach II
    & $O(m^3+n^3)$
    & $O(m^2+n^2)$ \\
Approach III
    & $O((m+n)r^2)$
    & $O((m+n)r)$ \\
\bottomrule
\end{tabular}
\end{table}

\section{Numerical Experiments}
\label{sec:experiments}
We assess the accuracy, convergence behavior, and computational efficiency of the proposed BUG-MLFEM with low-cost, non-iterative Sylvester solvers. Unless otherwise stated, all numerical experiments are based on the two-dimensional reaction-diffusion equation \eqref{diffusion} with $\alpha=-1$ on the unit square $\Omega=[0,1]^2$ over the time interval $[0,T]$, with $T=0.1$. We consider three types of boundary conditions: homogeneous Dirichlet, homogeneous Neumann, and periodic. For each type of boundary condition, the spatial discretization employs $P_k$ finite element spaces with polynomial degrees $k=1,2,3$ with the grid size $M=N$. In time, we use the backward Euler method, resulting in first-order implicit FR-MLFEM and BUG-MLFEM schemes. The BUG-MLFEM scheme is equivalent to the first-order implicit BUG integrator for WDLRA. We use the time step size
\(
\Delta t = 0.05h^{k+1}
\)
and fix the low rank at $r=10$ in all low-rank computations.
%The initial low-rank factors $U^0$, $S^0$, and $V^0$ are obtained by applying the GSVD in \Cref{alg:gsvd} to the initial coefficient matrix $W^0$.
The numerical tests are conducted on a desktop equipped with an Intel\textsuperscript{\textregistered} Core\textsuperscript{\texttrademark} Ultra 7 265 processor and 64~GB of RAM.

To examine the solvers' applicability for the boundary conditions, we consider the following three test cases, corresponding to homogeneous Dirichlet,
homogeneous Neumann, and periodic BCs. In each case,
the initial condition is selected such that an exact solution is
available.

%-------------------------------------------------------------------------------
\noindent{\textbf{Test Case 1: Homogeneous Dirichlet BC.}}
%-------------------------------------------------------------------------------
We consider the homogeneous Dirichlet BC, with the initial condition chosen as $w(x,y,0)
    =
    \sin(\pi x)\sin(\pi y),$
which gives the exact solution
\begin{equation}
    w(x,y,t)
    =
    e^{-(2\pi^2+1)t}
    \sin(\pi x)\sin(\pi y).
    \label{eq:dbc_exact}
\end{equation}

%-------------------------------------------------------------------------------
\noindent{\textbf{Test Case 2: Homogeneous Neumann BC.}}
%-------------------------------------------------------------------------------
We consider the homogeneous Neumann BC, with 
the initial condition taken as $w(x,y,0)
    =
    \cos(\pi x)\cos(\pi y),$
with the corresponding exact solution
\begin{equation}
    w(x,y,t)
    =
    e^{-(2\pi^2+1)t}
    \cos(\pi x)\cos(\pi y).
    \label{eq:nbc_exact}
\end{equation}

%-------------------------------------------------------------------------------
\noindent{\textbf{Test Case 3: Periodic BCs.}}
%-------------------------------------------------------------------------------
Finally, we consider periodic BCs in both spatial
directions, with
the initial condition chosen as $ w(x,y,0)
    =
    \sin(2\pi x)\cos(2\pi y),$
and the exact solution is
\begin{equation}
    w(x,y,t)
    =
    e^{-(8\pi^2+1)t}
    \sin(2\pi x)\cos(2\pi y).
    \label{eq:pbc_exact}
\end{equation}

\begin{example}[Accuracy and convergence tests]
\label{ex:accuracy}
In this example, we assess the spatial and temporal convergence
properties of the FR-MLFEM and BUG-MLFEM schemes.

% %-------------------------------------------------------------------------------
% \paragraph{Spatial convergence}
% %-------------------------------------------------------------------------------

We first investigate the spatial convergence of the FR-MLFEM and
BUG-MLFEM schemes. The spatial
discretization uses $P_k$ finite elements with $k=1,2,3$, and the
error is measured in the mass-lumped $L^2(\Omega)$ norm at the final time
$T=0.1$.
% \begin{equation}
%     e_h
%     =
%     \left\|
%         w_h(\cdot,T)-w(\cdot,T)
%     \right\|_{L^2(\Omega)}.
%     \label{eq:l2_error}
% \end{equation}
For the spatial convergence tests, we take the default time step size. 
%The spatial mesh is uniformly refined, with the mesh size $h$ successively reduced by a factor of two.
The spatial convergence results are presented in
\Cref{fig:spatial_accuracy_all_bcs}. For both FR-MLFEM and BUG-MLFEM,
the numerical errors decrease at the optimal rate $k+1$ for the
$P_k$ finite element approximations. More importantly, the optimal
spatial convergence behavior is consistently maintained for all three
types of boundary conditions.
The BUG-MLFEM scheme exhibits the same asymptotic convergence rates as
the corresponding FR-MLFEM scheme. This agreement confirms that the
low-rank dynamical formulation does not degrade the spatial accuracy
of the underlying finite element discretization.

\begin{figure}[htbp]
    \centering

    % Top row: FR-MLFEM
    \includegraphics[width=0.315\textwidth]{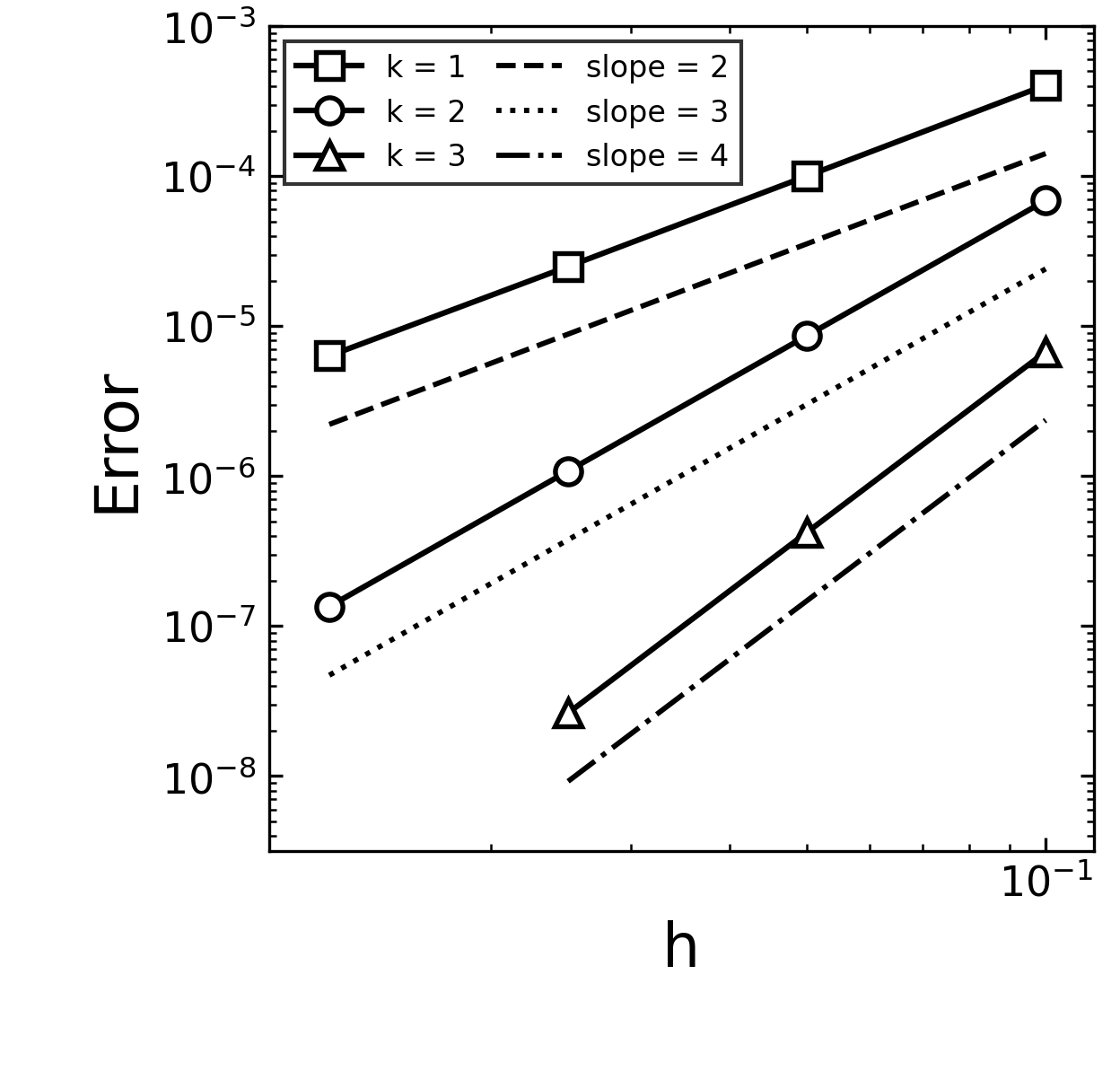}
    \hfill
    \includegraphics[width=0.315\textwidth]{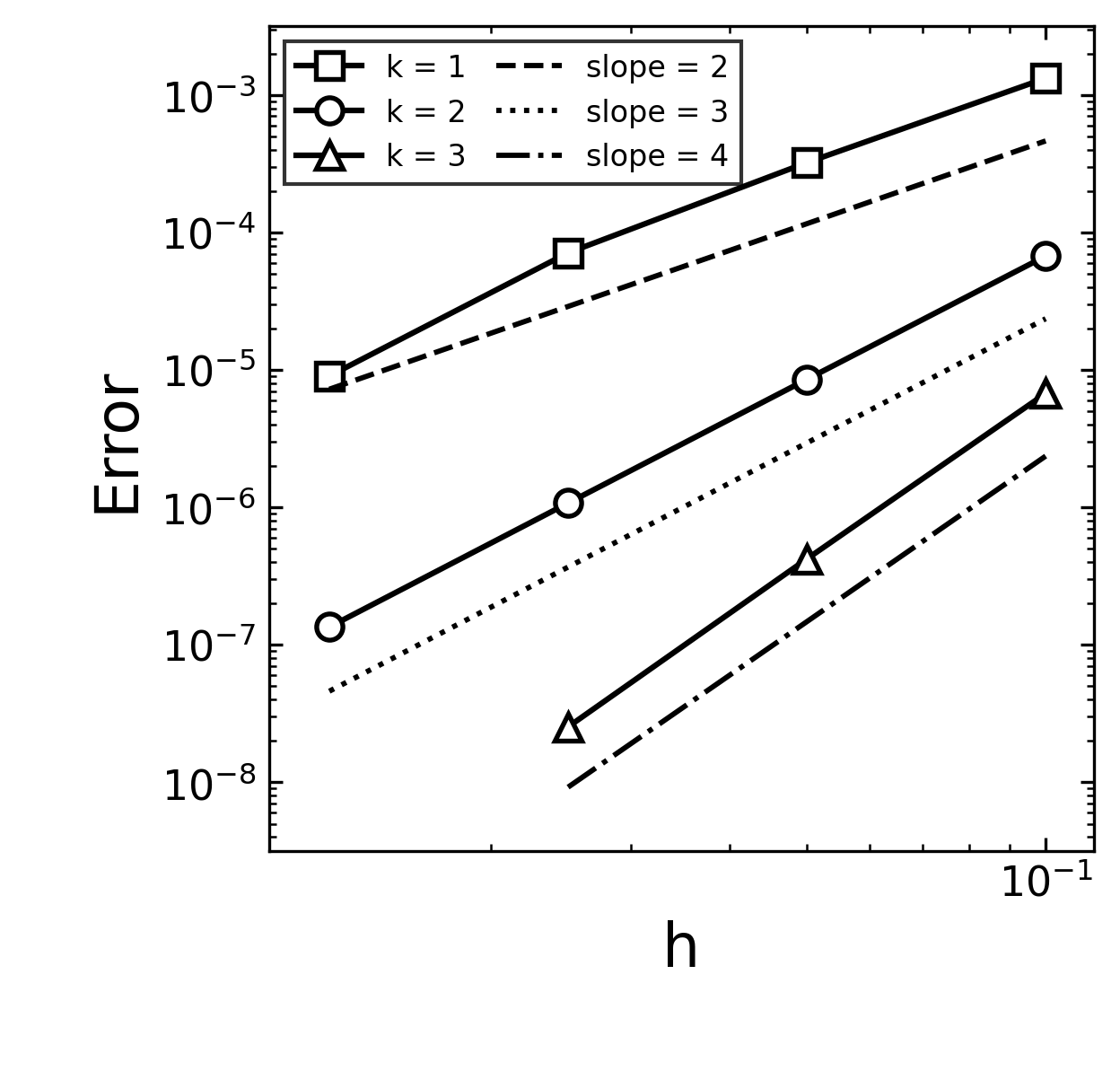}
    \hfill
    \includegraphics[width=0.315\textwidth]{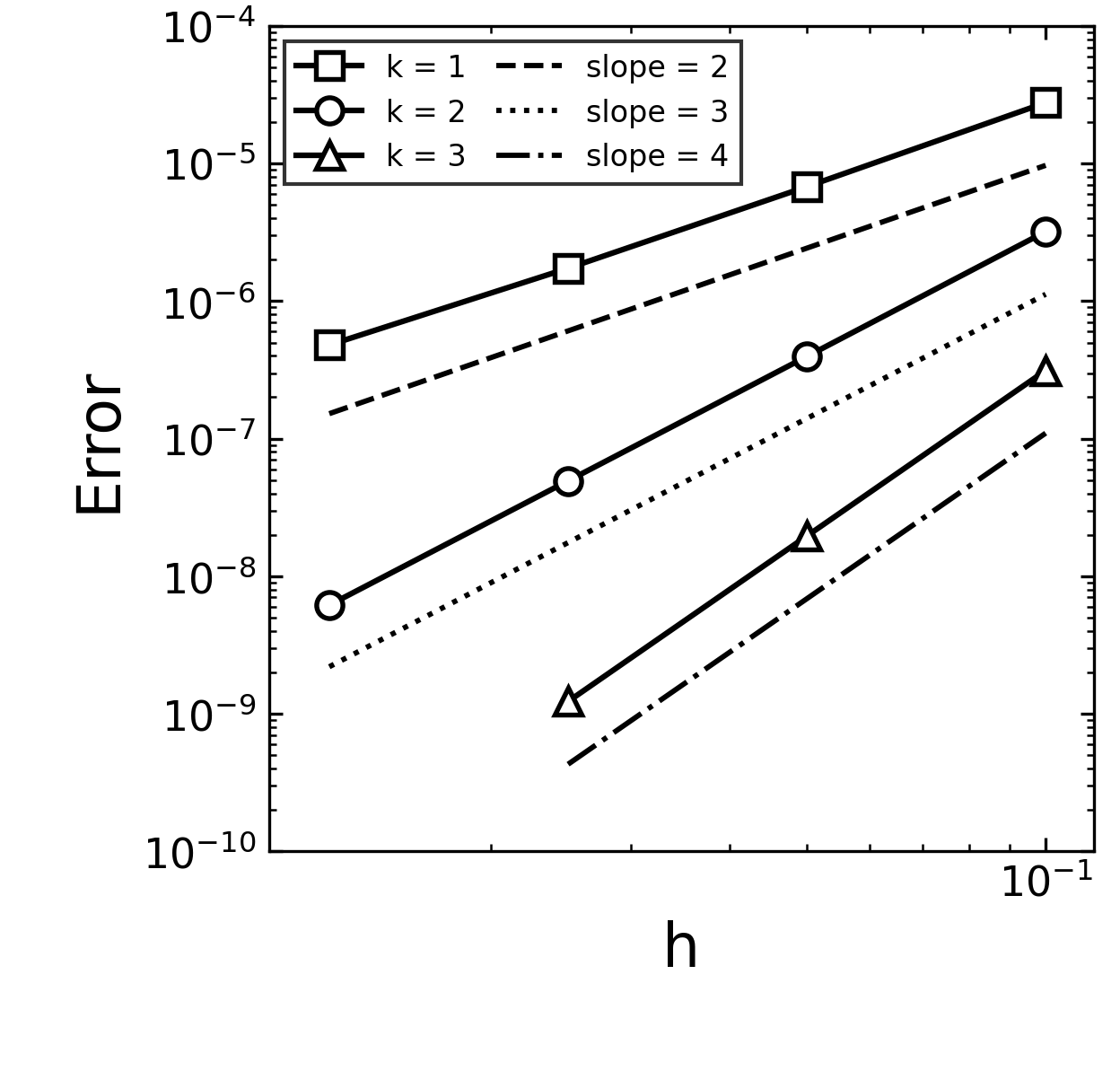}

    \vspace{0.1cm}

    % Bottom row: BUG-MLFEM
    \includegraphics[width=0.315\textwidth]{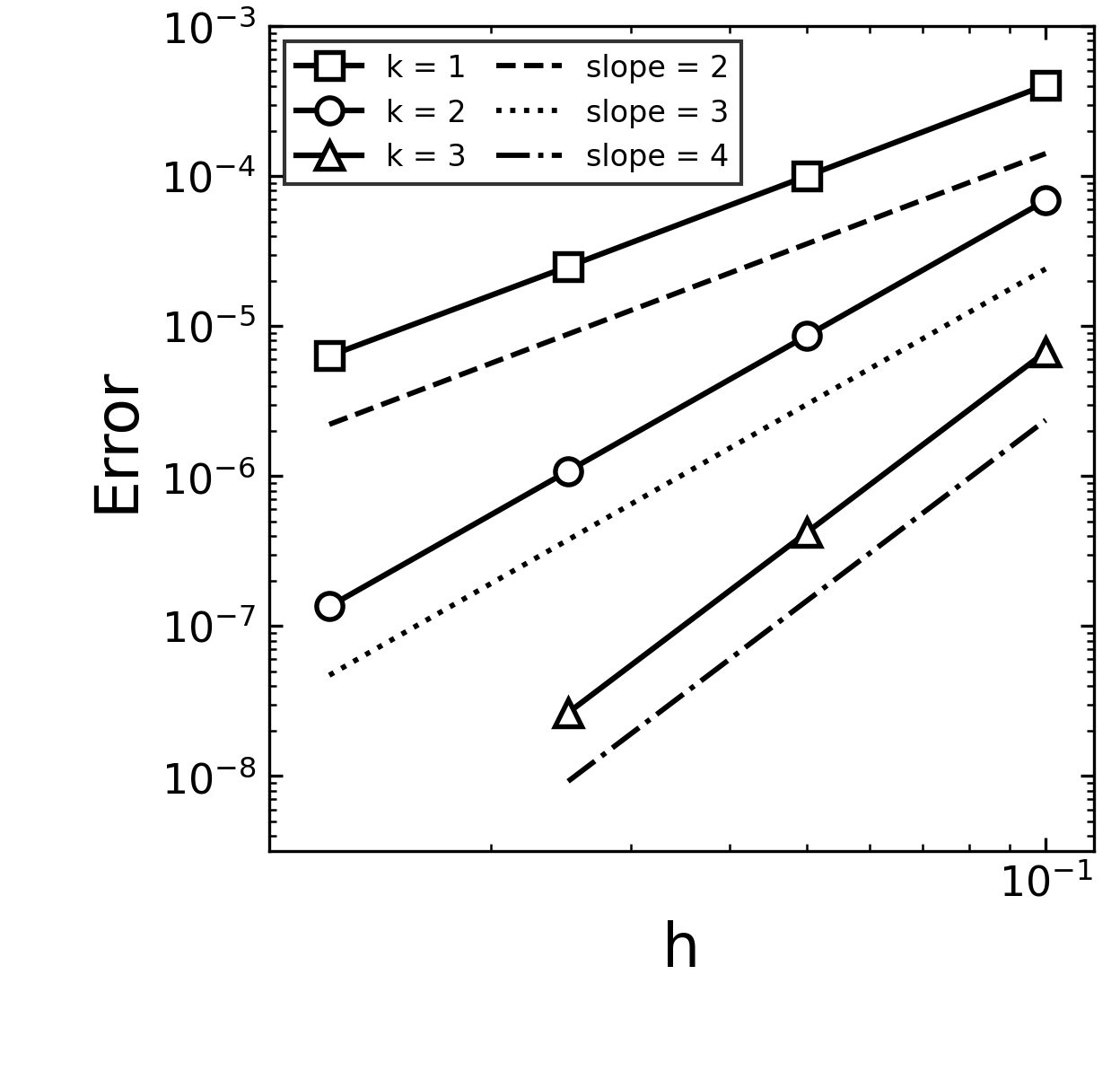}
    \hfill
    \includegraphics[width=0.315\textwidth]{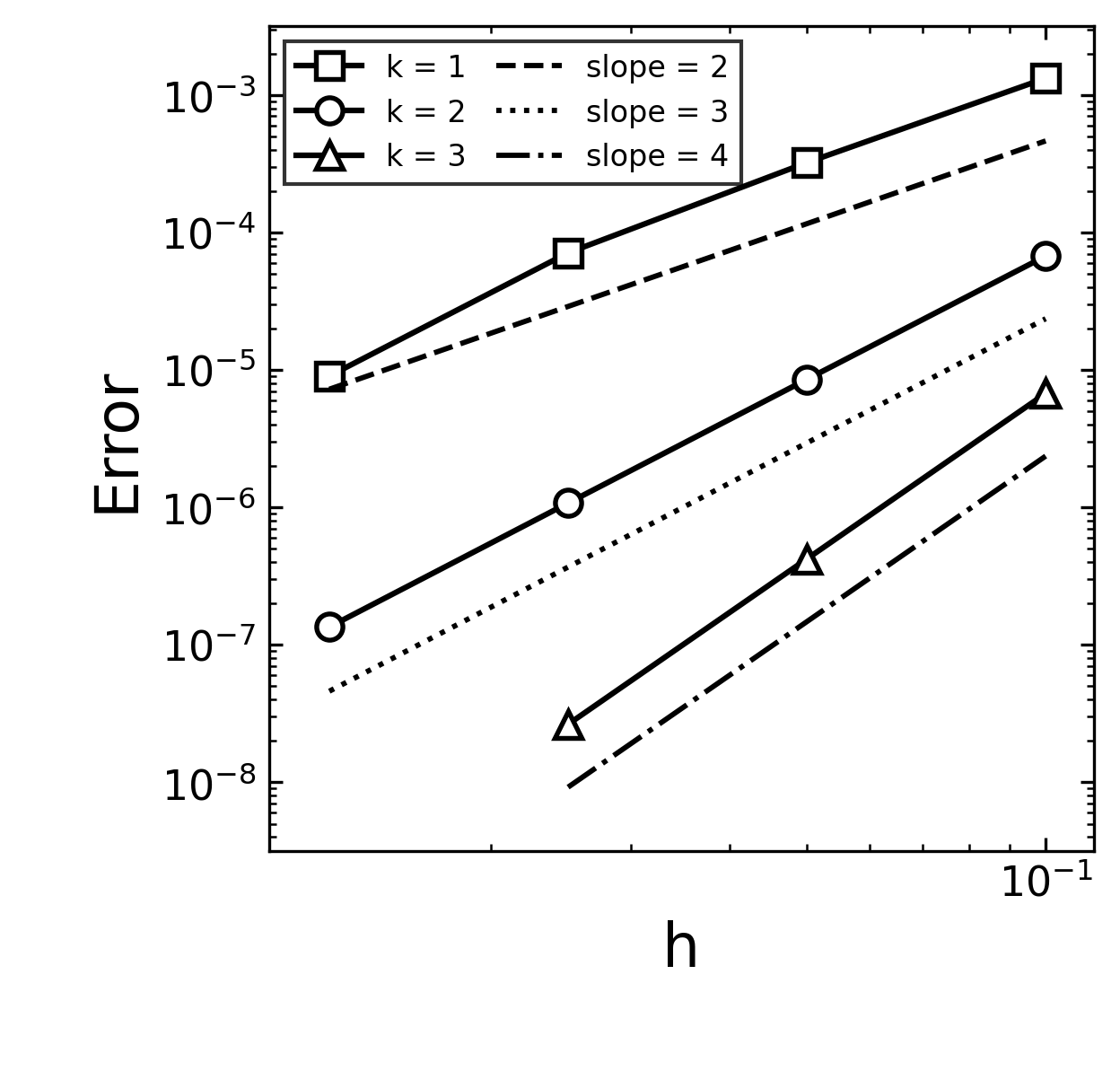}
    \hfill
    \includegraphics[width=0.315\textwidth]{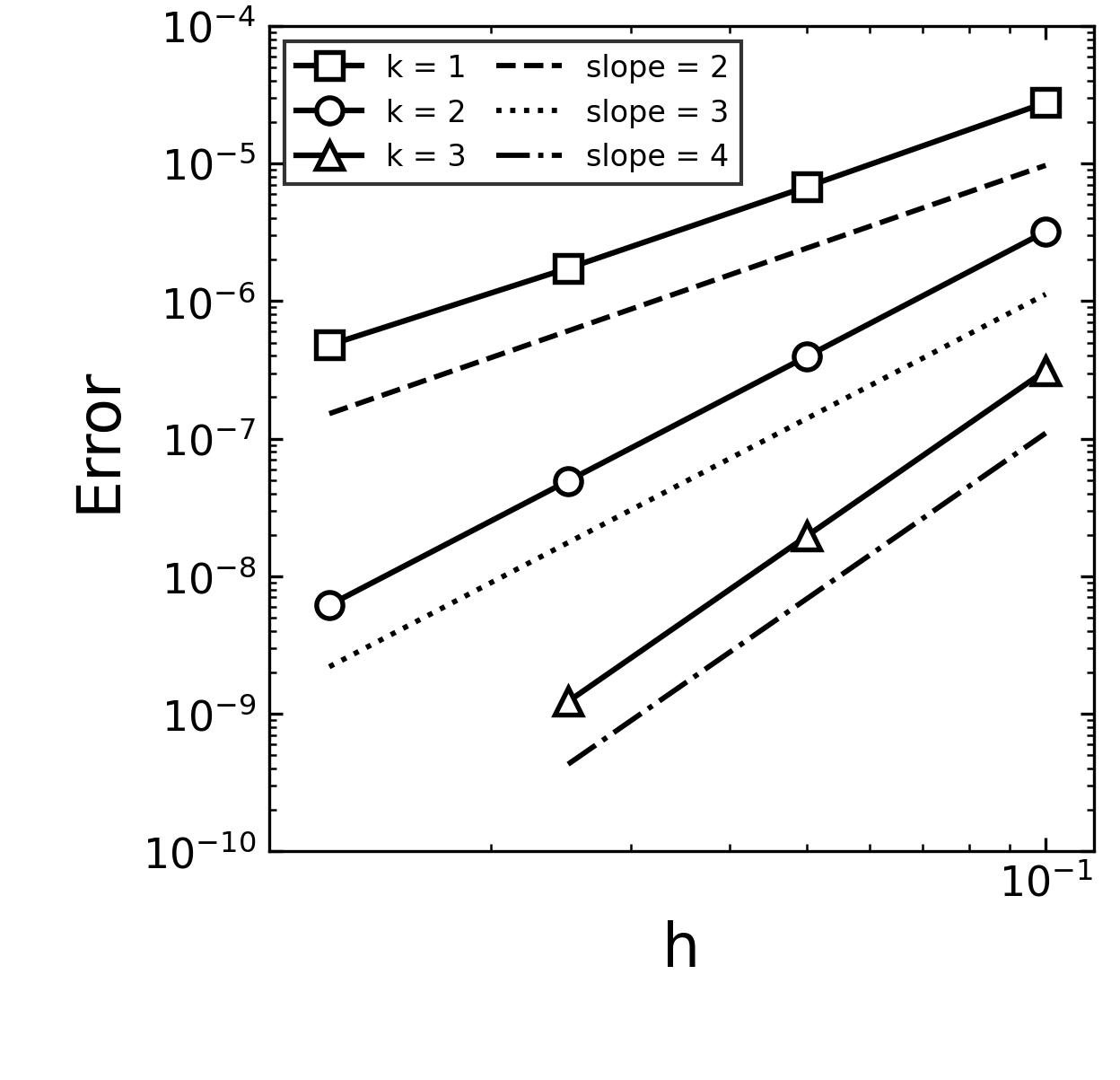}

    \caption{Spatial convergence of the FR-MLFEM (top row) and
BUG-MLFEM (bottom row) schemes using $P_k$ elements with
$k=1,2,3$ at $T=0.1$. From left to right, the columns correspond to
homogeneous Dirichlet, homogeneous Neumann, and periodic boundary
conditions.}
    \label{fig:spatial_accuracy_all_bcs}
\end{figure}

% %-------------------------------------------------------------------------------
% \paragraph{Temporal convergence}
% %-------------------------------------------------------------------------------

We next investigate the temporal convergence of the FR-MLFEM and
BUG-MLFEM schemes. We
fix the spatial mesh sufficiently fine and use $P_2$ finite
elements. 
\iffalse
The time step is successively refined according to
\(
    \Delta t,\
    \frac{\Delta t}{2},\
    \frac{\Delta t}{4},\
    \frac{\Delta t}{8},\ 
    \frac{\Delta t}{16}
\) with $\Delta t =0.01$,
while the final time remains fixed at $T=0.1$. 
\fi
The error is again
measured in the mass-lumped $L^2(\Omega)$ norm at the final time.
The temporal convergence results are shown in
\Cref{fig:temporal_convergence}. Both FR-MLFEM and BUG-MLFEM exhibit
the expected first-order temporal convergence. The same first-order
behavior is observed for all three types of boundary conditions,
confirming that the low-rank formulation preserves the temporal
accuracy of the underlying first-order time integrator.

\begin{figure}[htbp]
    \centering
    \includegraphics[width=0.32\textwidth]{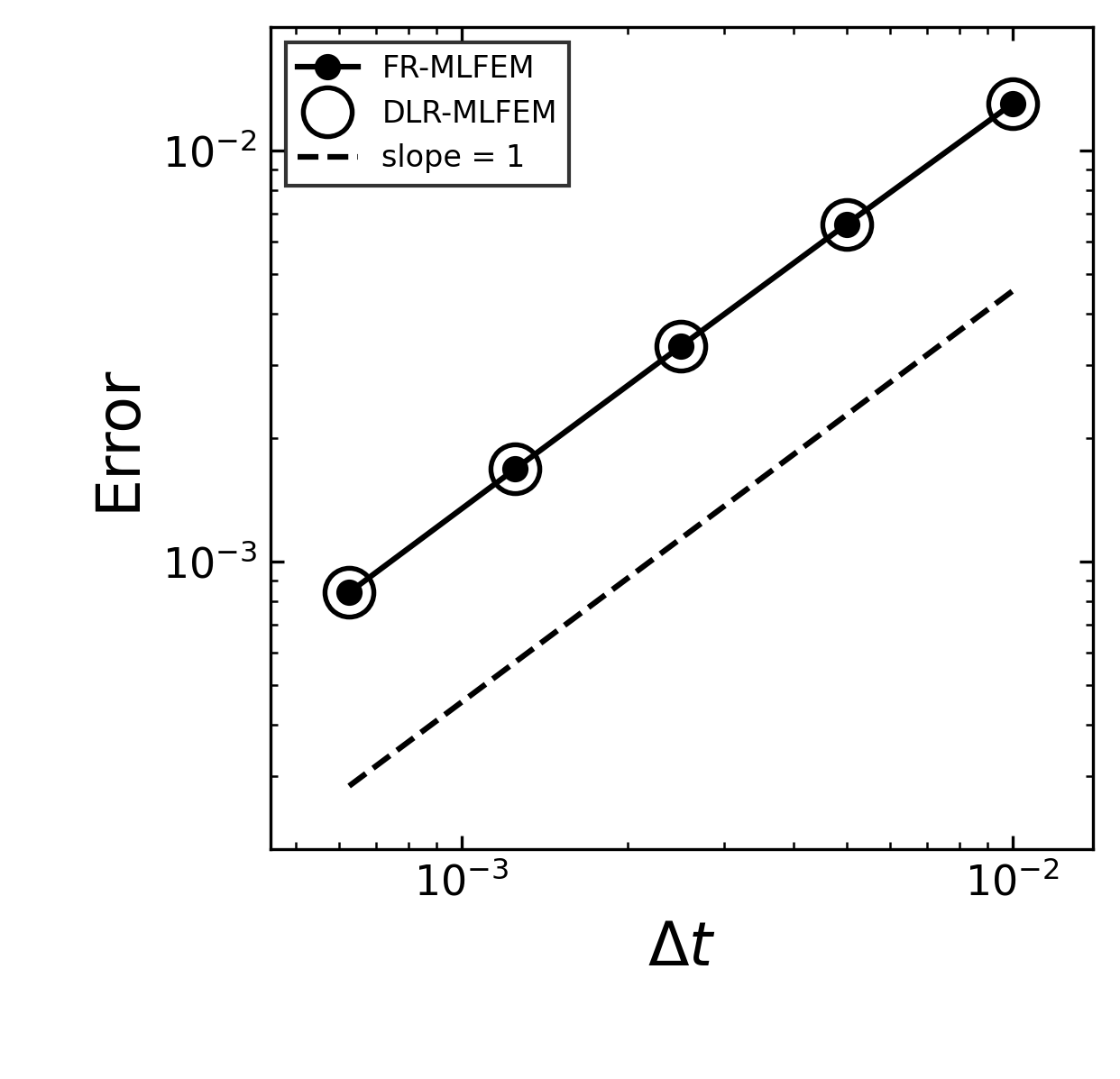}
    \hfill
    \includegraphics[width=0.32\textwidth]{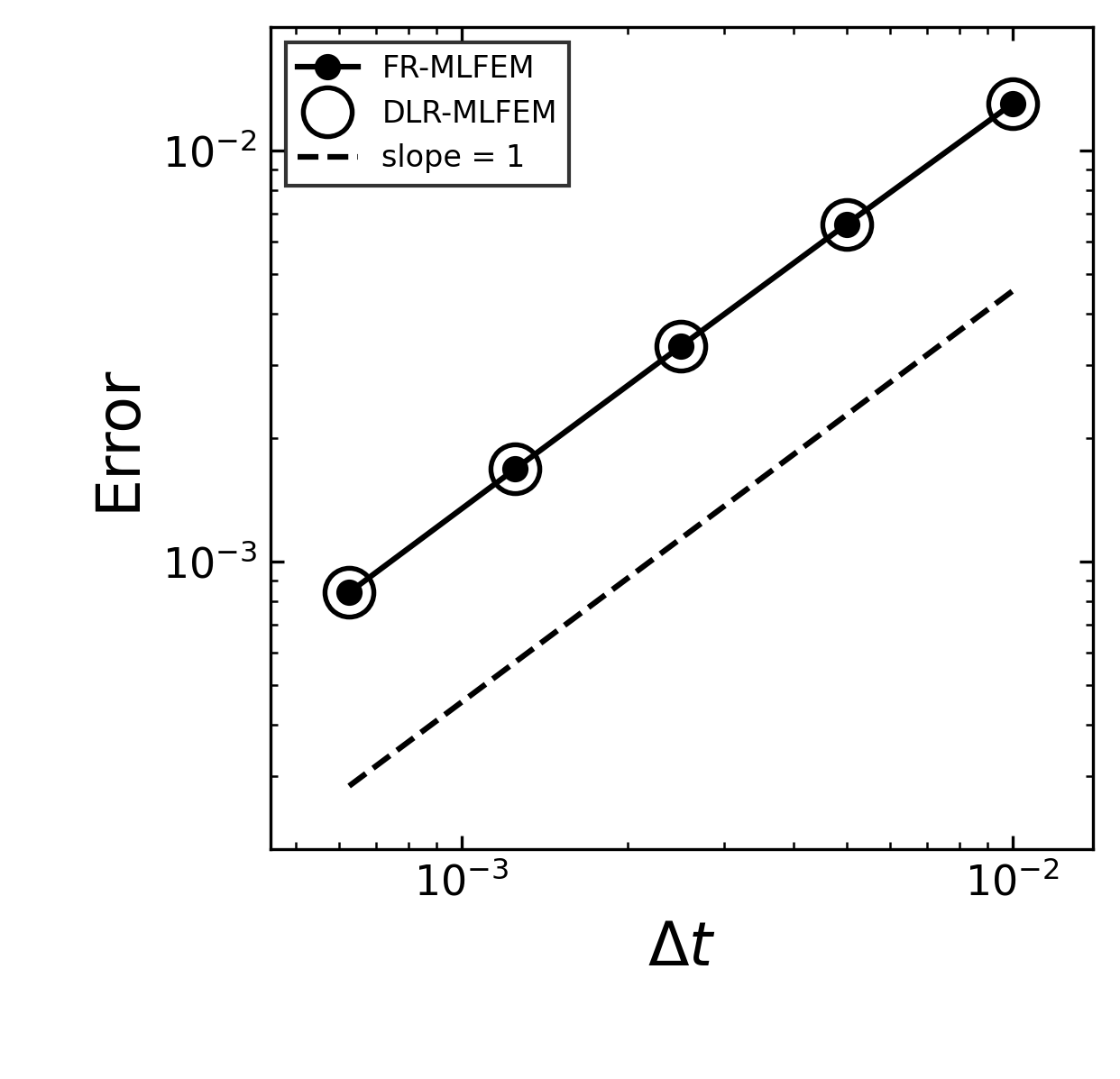}
    \hfill
    \includegraphics[width=0.32\textwidth]{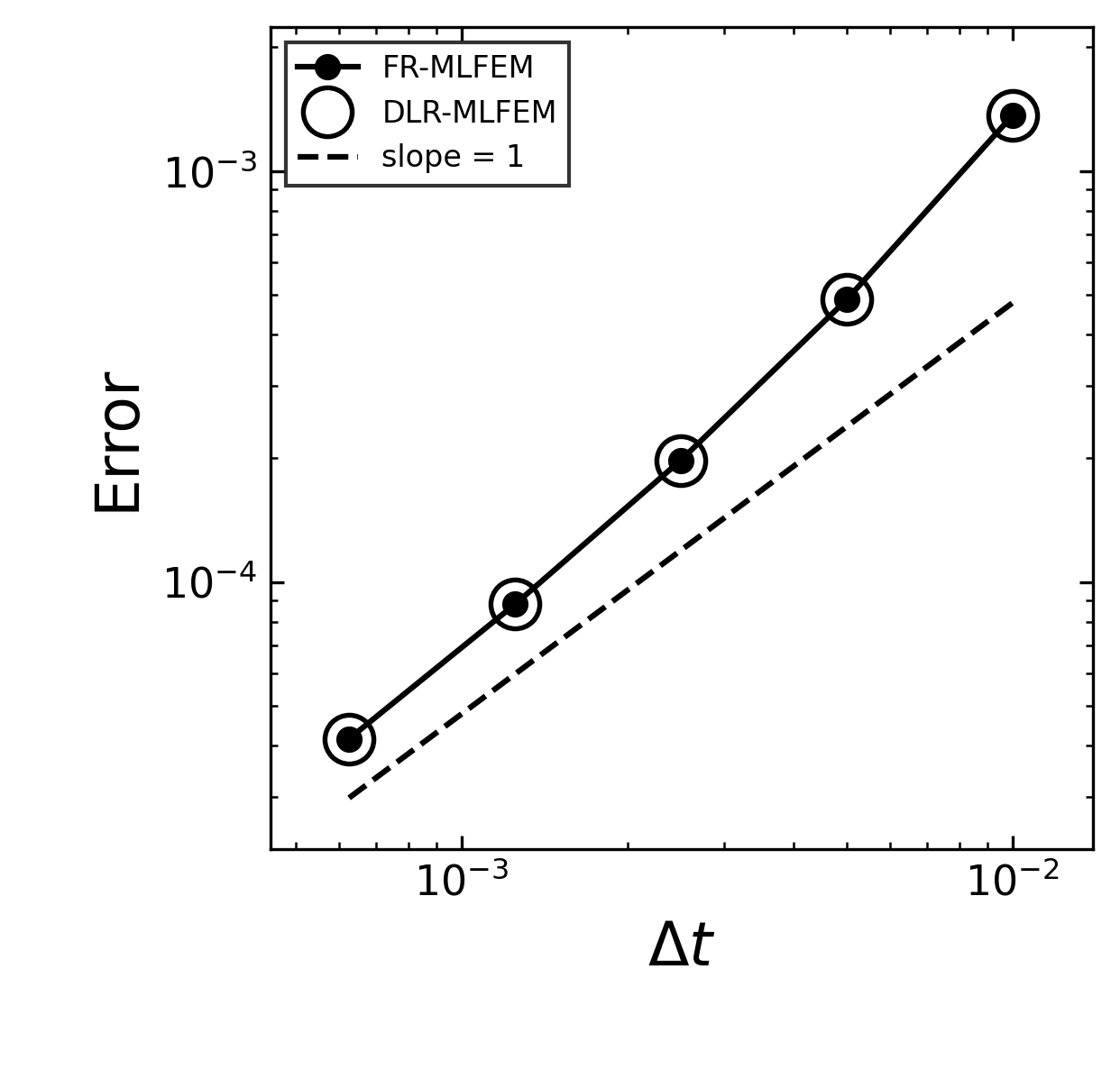}
    \caption{Temporal convergence of the FR-MLFEM and BUG-MLFEM schemes for $P_2$ polynomials. From left to right: homogeneous Dirichlet, homogeneous Neumann, and periodic BCs.}
    \label{fig:temporal_convergence}
\end{figure}

\end{example}

\begin{example}
    We continue the test cases above to compare the computational costs of the FR-MLFEM and BUG-MLFEM schemes using the different approaches described in \Cref{sec:cost}. All tests employ $P_2$ finite elements, the default time step, and a final time of $T=0.01$.
The CPU-time results in \Cref{tab:cpu_all_bc} demonstrate the substantial computational advantage of Approach~III over Approaches~I and~II for all three boundary conditions. For the relatively coarse mesh ($N=40$), Approach~III reduces the CPU time from $74.93$--$126.57$ seconds for Approach~I to $8.14$--$22.70$ seconds, achieving speedups ranging from $5.58\times$ to $9.21\times$. In comparison, Approach~II provides only modest improvements, with speedups of $1.06\times$--$1.12\times$. As the mesh is refined to $N=80$, the computational advantage of Approach~III becomes more pronounced. While Approaches~I and~II require approximately $6.9\times10^3$--$1.2\times10^4$ seconds, Approach~III completes the computations in $178.03$--$471.14$ seconds, yielding speedups of $25.49\times$--$38.80\times$. The advantage is even more significant for the finest mesh, $N=160$, where Approaches~I and~II require approximately $2.1\times10^5$--$3.6\times10^5$ seconds, whereas Approach~III requires only $2838.81$--$6576.58$ seconds. The resulting speedups reach $54.35\times$--$81.38\times$, with the largest gain observed for homogeneous Neumann boundary conditions. These results demonstrate that the computational savings of the proposed Approach~III increase substantially with mesh refinement, whereas Approach~II offers only limited improvements over Approach~I. The speedup for periodic boundary conditions is comparatively smaller than those for homogeneous Dirichlet and Neumann boundary conditions on the finer meshes, which is consistent with the additional low-rank correction required to account for the periodic wrap-around entries in \Cref{alg:sylvester-periodic}.

\begin{table}[h]
\centering
\caption{CPU time and speedup for $P_2$ finite elements. The speedup is measured relative to Approach~I for each boundary condition and mesh size.}
\label{tab:cpu_all_bc}
\small
\setlength{\tabcolsep}{3.5pt}
\begin{adjustbox}{max width=\linewidth}
\begin{tabular}{@{}ll rr rr rr@{}}
\toprule
\multirow{2}{*}{Boundary} & \multirow{2}{*}{Approach}
& \multicolumn{2}{c}{$N=40$}
& \multicolumn{2}{c}{$N=80$}
& \multicolumn{2}{c}{$N=160$} \\
\cmidrule(lr){3-4} \cmidrule(lr){5-6} \cmidrule(l){7-8}
& & Time (s) & Speedup & Time (s) & Speedup & Time (s) & Speedup \\
\midrule
\multirow{3}{*}{Dirichlet}
& I            & 74.93          & 1.00$\times$
               & 6907.73        & 1.00$\times$
               & 212398.83      & 1.00$\times$ \\
& II           & 67.76          & 1.11$\times$
               & 6620.85        & 1.04$\times$
               & 197226.01      & 1.08$\times$ \\
& \textbf{III} & \textbf{8.14}  & \textbf{9.21}$\times$
               & \textbf{178.03} & \textbf{38.80}$\times$
               & \textbf{2838.81} & \textbf{74.82}$\times$ \\
\midrule
\multirow{3}{*}{Neumann}
& I            & 126.57         & 1.00$\times$
               & 8950.84        & 1.00$\times$
               & 288031.73      & 1.00$\times$ \\
& II           & 119.87         & 1.06$\times$
               & 8877.27        & 1.01$\times$
               & 273967.76      & 1.05$\times$ \\
& \textbf{III} & \textbf{22.70}  & \textbf{5.58}$\times$
               & \textbf{252.27} & \textbf{35.48}$\times$
               & \textbf{3539.16} & \textbf{81.38}$\times$ \\
\midrule
\multirow{3}{*}{Periodic}
& I            & 117.10         & 1.00$\times$
               & 12008.88       & 1.00$\times$
               & 357406.11      & 1.00$\times$ \\
& II           & 104.10         & 1.12$\times$
               & 11398.99       & 1.05$\times$
               & 343819.75      & 1.04$\times$ \\
& \textbf{III} & \textbf{20.68} & \textbf{5.66}$\times$
               & \textbf{471.14} & \textbf{25.49}$\times$
               & \textbf{6576.58} & \textbf{54.35}$\times$ \\
\bottomrule
\end{tabular}
\end{adjustbox}
\end{table}

\end{example}

\begin{example}
To further examine the computational cost of the proposed Sylvester
solvers, we investigate the dependence of the CPU time, excluding the initial GSVD of $W^0$, on the spatial grid size $N$ and rank $r$. 
Here, the spatial discretization is based on the \(P_1\) polynomials.
First, we fix $r=2$ and increase $N$ from
$10$ to $320$, with the results reported in
\Cref{tab:cpu_time_N_r2}. For all three types of boundary conditions, the
CPU time remains nearly unchanged for small $N$, where the fixed
computational overhead is dominant. As $N$ increases, the CPU time grows
more rapidly, and the observed scaling rates approach or exceed one at
the finest mesh. In particular, at $N=320$, the CPU times are $97.73$,
$87.12$, and $119.48$ seconds for the Dirichlet, Neumann, and Periodic
BCs, respectively. The results demonstrate that the
computational cost exhibits approximately linear growth with respect to
$N$ in the practically relevant large-$N$ regime, consistent with the
linear dependence on the number of spatial degrees of freedom in the
proposed solver.

\begin{table}[htbp]
\centering
\small
\caption{CPU time (excluding the initial step) for the proposed Sylvester solver as the number of elements $N(=M)$ increases, with fixed rank $r=2$. 
%The observed rate is computed from two consecutive mesh sizes.
}
\label{tab:cpu_time_N_r2}
\setlength{\tabcolsep}{6pt}
\begin{tabular}{@{}c cc cc cc@{}}
\toprule
$N$
& \multicolumn{2}{c}{Dirichlet}
& \multicolumn{2}{c}{Neumann}
& \multicolumn{2}{c}{Periodic} \\
\cmidrule(lr){2-3}
\cmidrule(lr){4-5}
\cmidrule(lr){6-7}
& Time (s) & Rate
& Time (s) & Rate
& Time (s) & Rate \\
\midrule
10  & 19.8036 & ---
    & 18.7370 & ---
    & 35.0732 & --- \\
20  & 20.2239 & 0.0303
    & 19.2211 & 0.0368
    & 35.7823 & 0.0289 \\
40  & 21.5476 & 0.0915
    & 20.2841 & 0.0777
    & 37.4788 & 0.0668 \\
80  & 25.3766 & 0.2360
    & 23.7829 & 0.2296
    & 42.6061 & 0.1850 \\
160 & 40.2759 & 0.6664
    & 36.2545 & 0.6082
    & 57.8492 & 0.4412 \\
320 & 97.7278 & 1.2789
    & 87.1226 & 1.2649
    & 119.4764 & 1.0464 \\
\bottomrule
\end{tabular}
\end{table}

Next, we fix $N=320$ and vary the rank from $r=1$ to $r=10$, as shown in
\Cref{tab:cpu_time_r_N320}. The CPU time increases steadily with $r$ for
all three boundary conditions. For example, for the Dirichlet case, the
CPU time increases from $76.53$ seconds at $r=1$ to $204.42$ seconds at
$r=10$, while the corresponding increase is from $72.39$ to
$200.87$ seconds for the Neumann case. The periodic case requires
additional computational work due to the low-rank correction in
\Cref{alg:sylvester-periodic}, resulting in consistently higher CPU
time. The observed scaling rates with respect to $r$ remain below the
quadratic rate associated with the $O((m+n)r^2)$ complexity. This
is consistent with the overall complexity of the proposed solver, which
also contains lower-order terms in $r$ and rank-independent operations.
Consequently, the measured CPU time grows more slowly than quadratically
over the tested range of $r$.

\begin{table}[htbp]
\centering
\small
\caption{
% CPU time (excluding the initial step) and observed computational scaling with respect to the
% rank $r$ for fixed grid size $N=320$ using $P_1$ finite elements. 
%The observed rate between two consecutive reported values of $r$ is computed by $\log(T_r/T_{r_{\rm prev}})/\log(r/r_{\rm prev})$.
CPU time (excluding the initial step) and observed computational scaling with respect to the rank \(r\) for \(M=N=320\) using \(P_1\) finite elements.
}
\label{tab:cpu_time_r_N320}
\setlength{\tabcolsep}{6pt}
\begin{tabular}{@{}c cc cc cc@{}}
\toprule
& \multicolumn{2}{c}{Homogeneous Dirichlet}
& \multicolumn{2}{c}{Homogeneous Neumann}
& \multicolumn{2}{c}{Periodic} \\
\cmidrule(lr){2-3}
\cmidrule(lr){4-5}
\cmidrule(lr){6-7}
$r$ & Time (s) & Rate
   & Time (s) & Rate
   & Time (s) & Rate \\
\midrule
1  &  76.5336 & ---
   &  72.3940 & ---
   &  87.6317 & --- \\

2  &  95.0596 & 0.3127
   &  88.3810 & 0.2879
   & 113.0220 & 0.3671 \\

4  & 118.7699 & 0.3213
   & 117.2691 & 0.4080
   & 160.5381 & 0.5063 \\

6  & 146.3233 & 0.5145
   & 141.3002 & 0.4598
   & 209.4784 & 0.6563 \\

8  & 176.7193 & 0.6561
   & 172.1958 & 0.6874
   & 261.2554 & 0.7678 \\

10 & 204.4172 & 0.6525
   & 200.8726 & 0.6903
   & 311.0157 & 0.7813 \\
\bottomrule
\end{tabular}
\end{table}

\end{example}

\begin{example}\label{ex:augbug}

Following \cite{ceruti2024robust}, we assess the accuracy, computational
efficiency, and rank adaptivity of the augmented BUG integrator
\Cref{alg:AugBUG-Steps} for a stiff, forced heat equation on the two-dimensional
torus $\Omega=[-\pi,\pi)^2$:
\begin{equation}\label{eq:stiff_heat}
  \partial_t u(x,y,t)
  = \frac{1}{2}\left(
      \partial_{xx}u(x,y,t)+\partial_{yy}u(x,y,t)
    \right)+g(x,y),
\end{equation}
subject to periodic boundary conditions in both spatial directions. The
time-independent forcing term and the initial condition are, respectively, given by
\begin{equation}\label{eq:stiff_forcing}
  g(x,y)=\sum_{j=1}^{11}10^{-(j-1)}
  \exp\!\bigl(-j(x^2+y^2)\bigr)
\end{equation}
and
\begin{equation}\label{eq:stiff_ic}
  u(x,y,0)=\sin(x)\sin(y).
\end{equation}

We first examine spatial convergence. We fix the time step at
$\Delta t=10^{-5}$ and set the initial rank to $r^0=10$. For polynomial
degrees $k=1,2,3$, we refine the spatial mesh and measure the $L^2(\Omega)$
error at $T=1$. Since the exact solution is unavailable, we
use the FR-MLFEM solution on a fine mesh with $N=160$ as the reference
solution. The results are presented in \Cref{fig:stiff_heat_space}.
Both FR-MLFEM and augmented BUG-MLFEM (ABUG-MLFEM) exhibit the optimal spatial convergence
rate $k+1$. Moreover, their error curves are nearly indistinguishable,
suggesting that the low-rank approximation
introduces no discernible error beyond that associated with spatial
discretization.

\begin{figure}[htbp]
  \centering
  \begin{subfigure}[b]{0.45\textwidth}
    \centering
    \includegraphics[width=\textwidth]{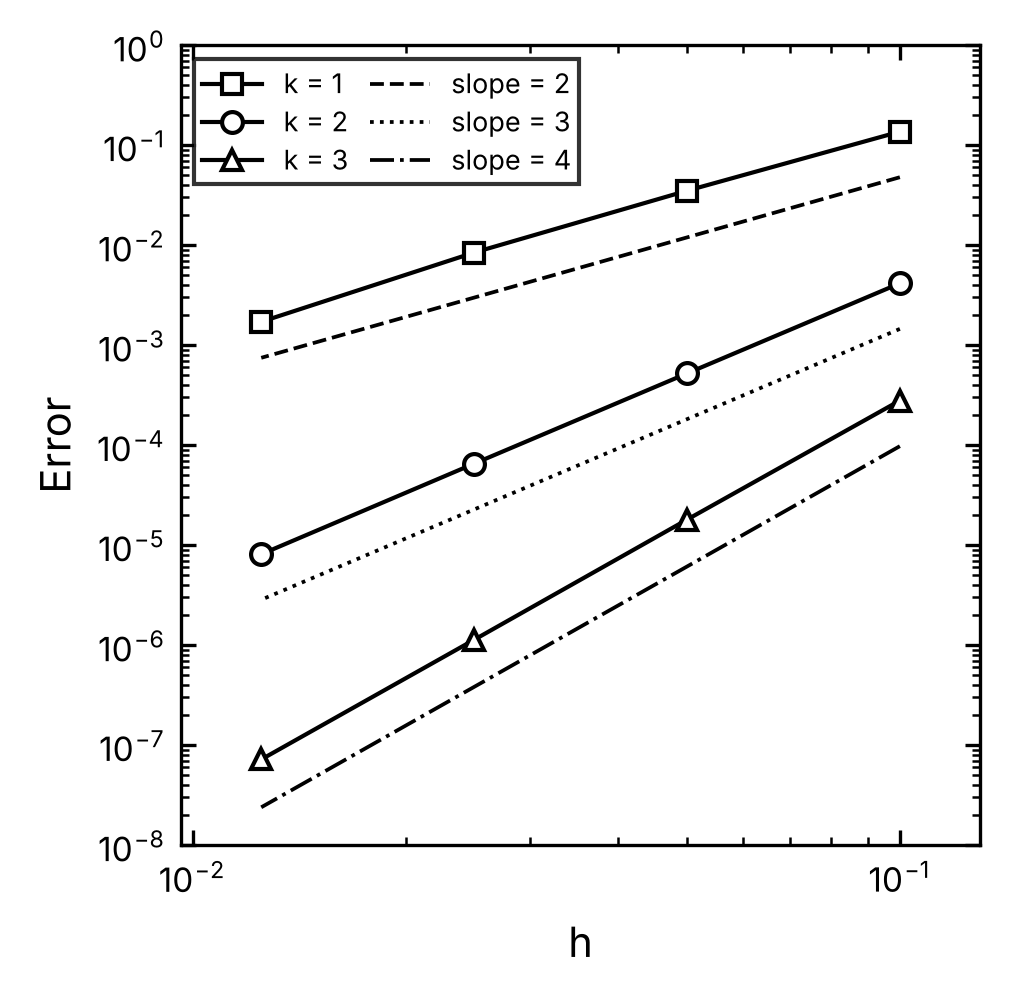}
    \label{fig:stiff_heat_space_FR}
  \end{subfigure}
  \hfill
  \begin{subfigure}[b]{0.45\textwidth}
    \centering
    \includegraphics[width=\textwidth]{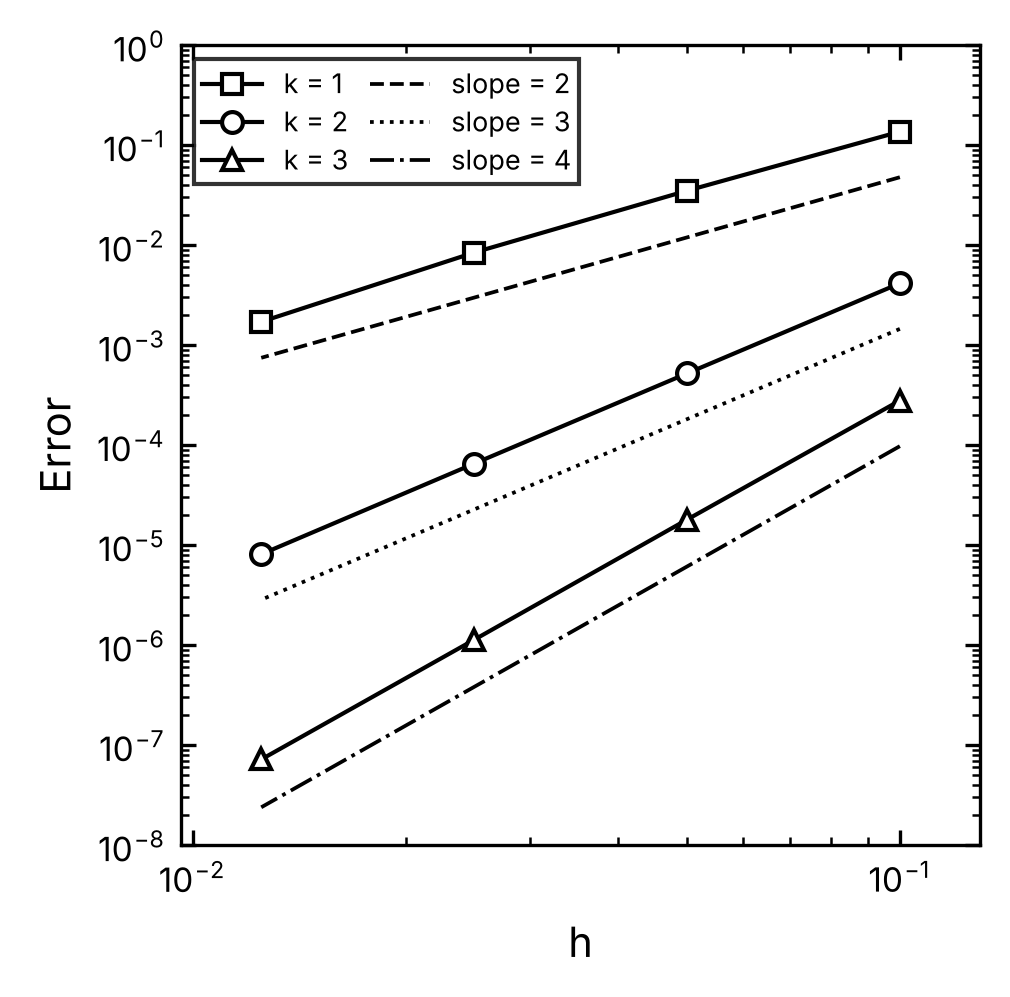}
    \label{fig:stiff_heat_space_BUG}
  \end{subfigure}
  \caption{Spatial convergence at $T=1$ for polynomial degrees
$k=1,2,3$, with $\Delta t=10^{-5}$ and initial rank $r^0=10$.
Left: FR-MLFEM; right: ABUG-MLFEM.
  %The reference lines have slopes $2$, $3$, and $4$.
  }
  \label{fig:stiff_heat_space}
\end{figure}

We next compare the computational costs of the two methods using the
approaches described in \Cref{sec:cost}. All runs employ $\mathbb{P}^2$
finite elements, $\Delta t=10^{-5}$, and final time $T=1$.
The measured CPU times and the speedups relative to Approach~I are reported
in \Cref{tab:cpu_stiff_heat}. Approach~III is the most efficient of the
three approaches at every mesh resolution. Its speedup increases from
$2.67\times$ for $N=40$ to $15.63\times$ for $N=80$ and
$38.09\times$ for $N=160$. In particular, for $N=160$, the CPU time
decreases from $24294.81$ seconds for Approach~I to $637.83$ seconds
for Approach~III. By comparison, Approach~II yields only modest speedups,
ranging from $1.02\times$ to $1.09\times$. These results demonstrate the
substantial computational advantage of Approach~III for this stiff test
problem, with the advantage becoming more pronounced as the mesh is refined.

\begin{table}[htbp]
\centering
\caption{CPU times and speedups for the stiff heat equation
\eqref{eq:stiff_heat} with periodic boundary conditions, using
$P_2$ finite elements. Speedups are measured relative to
Approach~I at each mesh resolution.}
\label{tab:cpu_stiff_heat}
\small
\setlength{\tabcolsep}{3.5pt}
\begin{adjustbox}{max width=\linewidth}
\begin{tabular}{@{}l rr rr rr@{}}
\toprule
\multirow{2}{*}{Approach}
& \multicolumn{2}{c}{$N=40$}
& \multicolumn{2}{c}{$N=80$}
& \multicolumn{2}{c}{$N=160$} \\
\cmidrule(lr){2-3} \cmidrule(lr){4-5} \cmidrule(l){6-7}
& Time (s) & Speedup
& Time (s) & Speedup
& Time (s) & Speedup \\
\midrule
I
& 598.47 & $1.00\times$
& 6597.74 & $1.00\times$
& 24294.81 & $1.00\times$ \\
II
& 553.10 & $1.08\times$
& 6444.95 & $1.02\times$
& 22359.75 & $1.09\times$ \\
\textbf{III}
& \textbf{223.95} & \textbf{$2.67\times$}
& \textbf{422.25} & \textbf{$15.63\times$}
& \textbf{637.83} & \textbf{$38.09\times$} \\
\bottomrule
\end{tabular}
\end{adjustbox}
\end{table}

We then investigate temporal convergence. We fix $N=80$, $k=2$, and
$r^0=10$, and successively halve the time step from
$\Delta t=10^{-2}$ to $\Delta t=6.25\times10^{-4}$. The $L^2(\Omega)$
error is evaluated at $T=1$. As shown in the left panel of
\Cref{fig:stiff_heat_time_rank}, both FR-MLFEM and ABUG-MLFEM
exhibit first-order convergence in time. Their errors are nearly identical
for all tested time steps, indicating that the augmented BUG integrator
preserves the temporal accuracy of the underlying time discretization in
these experiments.

Finally, we examine the rank evolution of ABUG-MLFEM for
polynomial degrees $k=1,2,3$ up to $T=2.5$, using $\Delta t=10^{-2}$
and truncation tolerance $\vartheta=10^{-12}$. The rank histories are shown
in the right panel of \Cref{fig:stiff_heat_time_rank}. Each term in
\eqref{eq:stiff_forcing} is separable, since
\(
  \exp\!\bigl(-j(x^2+y^2)\bigr)
  =\exp(-jx^2)\exp(-jy^2).
\)
Consequently, the forcing term has separation rank at most $11$.
The geometrically decreasing coefficients $10^{-(j-1)}$ produce modes
of progressively smaller amplitude. As these modes contribute to the
evolving solution, the adaptive integrator increases the rank in discrete
steps, from $8$ to $9$ and then to $10$. The rank subsequently remains
at $10$ over the simulated interval, with the remaining modes not retained
under the prescribed truncation tolerance. The rank histories are similar
for all three polynomial degrees, although the times at which the rank
increases differ: the $\mathbb{P}^2$ computation reaches rank $10$ at
approximately $t=0.2$, whereas the $\mathbb{P}^1$ and $\mathbb{P}^3$
computations reach this rank at approximately $t=0.6$. This behavior
suggests that the adaptive rank is primarily determined by the effective
low-rank structure of the evolving solution, while the spatial
discretization can affect the timing of rank enrichment.

\begin{figure}[htbp]
  \centering
  \begin{subfigure}[b]{0.42\textwidth}
    \centering
    \includegraphics[width=\linewidth,height=5cm,keepaspectratio]{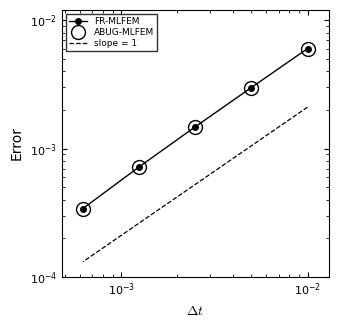}
  \end{subfigure}
  \hfill
  \begin{subfigure}[b]{0.56\textwidth}
    \centering
    \raisebox{0.14cm}{%
      \includegraphics[width=\linewidth,height=4.83cm,keepaspectratio]
      {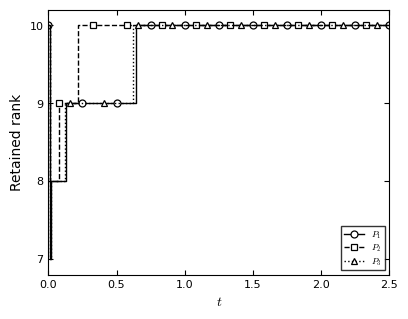}}
  \end{subfigure}
  \caption{Stiff heat equation \eqref{eq:stiff_heat}.
Left: temporal convergence at $T=1$ for FR-MLFEM and ABUG-MLFEM,
with $N=80$, $k=2$, and $r^0=10$.
Right: rank evolution of ABUG-MLFEM up to $T=2.5$ for
$k=1,2,3$, with $\Delta t=10^{-2}$ and truncation tolerance
$\vartheta=10^{-12}$.}
  \label{fig:stiff_heat_time_rank}
\end{figure}

\end{example}

%=================================================================================
\section{Conclusions}
\label{sec:conclusion}
%=================================================================================

We have presented a BUG-MLFEM coupled with fast, non-iterative Sylvester solvers for two-dimensional reaction-diffusion equations. Under the WDLRA framework, the scheme guarantees discrete well-posedness and mass-lumped $L^2$-stability while maintaining optimal $O(h^{k+1}+\Delta t)$ convergence across Dirichlet, Neumann, and periodic BCs. By exploiting the sparse-by-small structure of the intermediate $K$- and $L$-steps, our tailored solvers reduce the per-step computational cost to $O((m+n)r^2)$ and storage to $O((m+n)r)$. Numerical benchmarks confirm up to a $81\times$ speedup over full-rank and conventional low-rank approaches on mesh grid $N=160$, with performance gains becoming increasingly pronounced on refined meshes. Furthermore, the incorporation of the augmented BUG integrator confirms the superior robustness and adaptability of the proposed methodology.

The proposed low-cost non-iterative solvers can also be applied, either directly or with slight modifications, to the Sylvester equations arising from the BUG formulation of matrix ODEs obtained from finite difference discretizations.
Potential extensions of this work include error analysis, time-dependent diffusion coefficients, additional advection terms, three-dimensional problems, high-order time discretizations \cite{hauck2026high}, and general finite element methods without mass lumping, all of which will be pursued in future research.

\appendix
\section{GSVD}
We first construct the initial coefficient matrix $W^0$, and then obtain the initial matrices $U^0$, $S^0$, and $V^0$ by applying the generalized singular value decomposition (GSVD) to $W^0$.

We represent the discrete initial data as
\[
w_h(x,y,0)=\Phi^{\top}(x)W^0\Psi(y),
\qquad
W^0\in\mathbb{R}^{m\times n}.
\]
To determine $W^0$, we test the initial condition against
\[
z_h=\Phi^\top e_i(e'_j)^\top\Psi,
\qquad
i=1,\ldots,m,\quad j=1,\ldots,n,
\]
where $e_i\in\mathbb{R}^m$ and $e'_j\in\mathbb{R}^n$ are the canonical basis vectors. Using the mass-lumped inner product, we have
\[
(w_h(x,y,0),z_h)_h=(w_0,z_h)_h.
\]
Together with \eqref{mass_lumped_inner_product}, \eqref{eq:M_x_M_y}, and \Cref{lem:inner-products}, this relation gives
\[
W^0_{ij}=w_0(x_i,y_j),
\qquad
i=1,\ldots,m,\quad j=1,\ldots,n.
\]

Given $W^0 \in \mathbb{R}^{m\times n}$ and diagonal positive-definite mass
matrices $M_x \in \mathbb{R}^{m\times m}$ and $M_y \in \mathbb{R}^{n\times n}$, we seek a
rank-$r$ factorization
\[
    W^0 \approx U^0 S^0 (V^0)^\top,
\]
where the bases $U^0$ and $V^0$ satisfy
\[
    (U^0)^\top M_x U^0 = I_r,
    \qquad
    (V^0)^\top M_y V^0 = I_r.
\]
Since $M_x$ and $M_y$ are diagonal and positive definite, their square roots and
inverse square roots exist and are diagonal. We therefore introduce the weighted
transformation
\begin{equation}
    \widetilde{W}
    = M_x^{1/2} W^0 M_y^{1/2}.
    \label{eq:gwsvd-transform}
\end{equation}
Applying the standard truncated SVD to $\widetilde{W}$ gives
\begin{equation}
    \widetilde{W}
    \approx
    \widehat{U}_r \widehat{\Sigma}_r \widehat{V}_r^\top,
    \label{eq:gwsvd-svd}
\end{equation}
where
$\widehat{U}_r^\top\widehat{U}_r
=\widehat{V}_r^\top\widehat{V}_r=I_r$.
Transforming the factors back yields
\begin{equation}
    U^0 = M_x^{-1/2}\widehat{U}_r,
    \qquad
    V^0 = M_y^{-1/2}\widehat{V}_r,
    \qquad
    S^0 = \widehat{\Sigma}_r.
    \label{eq:gwsvd-undo}
\end{equation}
Indeed, the bases satisfy
\[
    (U^0)^\top M_x U^0
    = \widehat{U}_r^\top \widehat{U}_r
    = I_r,
    \qquad
    (V^0)^\top M_y V^0
    = \widehat{V}_r^\top \widehat{V}_r
    = I_r.
\]
Moreover,
\[
    U^0 S^0 (V^0)^\top
    =
    M_x^{-1/2}
    \widehat{U}_r
    \widehat{\Sigma}_r
    \widehat{V}_r^\top
    M_y^{-1/2},
\]
which provides the desired weighted rank-$r$ approximation of $W^0$. The implentation is given in \Cref{alg:gsvd}.

\begin{algorithm}[htbp]
\caption{GSVD.}
\label{alg:gsvd}
\begin{algorithmic}[1]

\REQUIRE
Matrix $W^0 \in \mathbb{R}^{m\times n}$; diagonal positive-definite mass matrices
$M_x  \in \mathbb{R}^{m\times m}$ and
$M_y  \in \mathbb{R}^{n\times n}$; rank $r$.

\ENSURE
$U^0 \in \mathbb{R}^{m\times r}$, $S^0 \in \mathbb{R}^{r\times r}$,
$V^0 \in \mathbb{R}^{n\times r}$ such that
$W^0 \approx U^0S^0(V^0)^\top$,
$(U^0)^\top M_xU^0=I_r$, and
$(V^0)^\top M_yV^0=I_r$.

\STATE Form
$M_x^{1/2}$ and
$M_y^{1/2}$.
\hfill $\triangleright$ elementwise

\STATE $\widetilde{W}=
M_x^{1/2}W^0M_y^{1/2}$.

\STATE $(\widehat U_r,S^0,\widehat V_r^\top)
= \operatorname{truncatedSVD}(\widetilde{W},r)$.

\STATE $U^0= M_x^{-1/2}\widehat U_r$,
\quad
$V^0= M_y^{-1/2}\widehat V_r$.
\hfill $\triangleright$ elementwise scaling

\STATE \RETURN $U^0,S^0,V^0$

\end{algorithmic}
\end{algorithm}

\section{Augmented BUG integrator}\label{abug}
We extend \Cref{alg:KLS-Steps} by incorporating the augmented BUG integrator \cite{ceruti2022rank}, yielding the augmented BUG-MLFEM (ABUG-MLFEM).
The resulting algorithm is presented in \Cref{alg:AugBUG-Steps}.

\begin{algorithm}[htbp]
\caption{One-step augmented BUG integrator (ABUG-MLFEM).}
\label{alg:AugBUG-Steps}
\begin{algorithmic}[1]

\REQUIRE
$U^\mathfrak{n} \in \mathbb{R}^{m\times r^\mathfrak{n}},\, S^\mathfrak{n} \in \mathbb{R}^{r^\mathfrak{n}\times r^\mathfrak{n}},\, V^\mathfrak{n} \in \mathbb{R}^{n\times r^\mathfrak{n}},\, \Delta t,\, M_x,\, M_y$, tolerance $\vartheta$.
\ENSURE
$U^{\mathfrak{n}+1},\, S^{\mathfrak{n}+1},\, V^{\mathfrak{n}+1}$ of rank $r^{\mathfrak{n}+1}$.

\algcomment{\textbf{Step 1}: Augment $U^\mathfrak{n} \to \widehat{U} \in \mathbb{R}^{m\times \hat{r}}$ and $V^\mathfrak{n} \to \widehat{V} \in \mathbb{R}^{n\times \hat{r}}$, $\hat{r} \le 2r^\mathfrak{n}$, in parallel:}
\STATE \textit{K-step:} Fix $V^\mathfrak{n}$, and solve $K^{\mathfrak{n}+1} \in \mathbb{R}^{m \times r^\mathfrak{n}}$ from
\begin{equation}
  D_t K^{\mathfrak{n}+1}
  = \mathscr{L}(K^{\mathfrak{n}+1} (V^\mathfrak{n})^\top)M_y V^\mathfrak{n},
  \qquad K^\mathfrak{n} = U^\mathfrak{n} S^\mathfrak{n}.
  \label{eq:aug_k_eqn}
\end{equation}
\STATE Compute $[\widehat{U},\, R_K] = \mathrm{GQR}\!\left([\,U^\mathfrak{n},\, K^{\mathfrak{n}+1}\,],\, M_x\right)$.
\STATE Compute the $\hat{r} \times r^\mathfrak{n}$ matrix $\widehat{\mathcal{M}} = \widehat{U}^\top M_x U^\mathfrak{n}$.
\STATE \textit{L-step:} Fix $U^\mathfrak{n}$, and solve $L^{\mathfrak{n}+1} \in \mathbb{R}^{n \times r^\mathfrak{n}}$ from
\begin{equation}
  D_t L^{\mathfrak{n}+1}
  = \mathscr{L}(U^\mathfrak{n} (L^{\mathfrak{n}+1})^\top)^\top M_x U^\mathfrak{n},
  \qquad L^\mathfrak{n} = V^\mathfrak{n} (S^\mathfrak{n})^\top.
  \label{eq:aug_l_eqn}
\end{equation}
\STATE Compute $[\widehat{V},\, R_L] = \mathrm{GQR}\!\left([\,V^\mathfrak{n},\, L^{\mathfrak{n}+1}\,],\, M_y\right)$.
\STATE Compute the $\hat{r} \times r^\mathfrak{n}$ matrix $\widehat{\mathcal{N}} = \widehat{V}^\top M_y V^\mathfrak{n}$.

\algcomment{\textbf{Step 2}: Galerkin update $S^\mathfrak{n} \to \widehat{S}^{\mathfrak{n}+1} \in \mathbb{R}^{\hat{r}\times\hat{r}}$ in the augmented space:}
\STATE \textit{S-step:} Compute the projected initial matrix $\widehat{S}^{\mathfrak{n},*} = \widehat{\mathcal{M}}\, S^\mathfrak{n}\, \widehat{\mathcal{N}}^\top \in \mathbb{R}^{\hat{r}\times\hat{r}}$.
\STATE Solve $\widehat{S}^{\mathfrak{n}+1} \in \mathbb{R}^{\hat{r} \times \hat{r}}$ from
\begin{equation}\label{eq:aug_s_eqn}
  \frac{\widehat{S}^{\mathfrak{n}+1} - \widehat{S}^{\mathfrak{n},*}}{\Delta t}
  = \widehat{U}^\top M_x\, \mathscr{L}\!\left(\widehat{U}\, \widehat{S}^{\mathfrak{n}+1}\, \widehat{V}^\top\right) M_y \widehat{V}.
\end{equation}

\algcomment{\textbf{Step 3}: Truncate $\widehat{U}\,\widehat{S}^{\mathfrak{n}+1}\,\widehat{V}^\top$ to rank $r^{\mathfrak{n}+1} \le \hat{r}$ with tolerance $\vartheta$:}
\STATE Compute the SVD 
\begin{equation}
\widehat S^{\mathfrak n+1}
=
P\Sigma Q^\top,
\qquad
\Sigma=\operatorname{diag}
(\sigma_1,\ldots,\sigma_{\hat r}),
\qquad
\sigma_1\ge\cdots\ge\sigma_{\hat r}\ge0.
\end{equation}
\STATE Determine the smallest $r^{\mathfrak n+1}\le\hat r$ satisfying $\left(
\sum_{j=r^{\mathfrak n+1}+1}^{\hat r}
\sigma_j^2
\right)^{1/2}
\le\vartheta.$
% \begin{equation}\label{eq:aug_rank}
%   r^{\mathfrak{n}+1} = \min\Big\{ k \in \{1,\ldots,\hat{r}\} \;:\; \Big(\textstyle\sum_{j=k+1}^{\hat{r}} \sigma_j^2\Big)^{1/2} \le \vartheta \Big\}.
% \end{equation}
\STATE Set $U^{\mathfrak{n}+1} = \widehat{U} P_{:,1:r^{\mathfrak{n}+1}}$, $\;S^{\mathfrak{n}+1} = \mathrm{diag}(\sigma_1, \ldots, \sigma_{r^{\mathfrak{n}+1}})$, $\;V^{\mathfrak{n}+1} = \widehat{V} Q_{:,1:r^{\mathfrak{n}+1}}$.
\end{algorithmic}
\end{algorithm}

\section*{Acknowledgments}
This research was supported by the University of Texas at El Paso Startup Award.

\bibliographystyle{siamplain}
\bibliography{references}
\end{document}